\newcommand{\intrinsic}[1]{S(#1)}
\newcommand{\fullyreduced}[1]{\widetilde{S}\left(#1\right)}
\DeclareMathOperator*{\im}{\mathrm{im}}
\definecolor{coloryellow}{RGB}{240,228,66}
\definecolor{colorskyblue}{RGB}{86,180,233}
\definecolor{colorvermillion}{RGB}{213,94,0}
\DeclareMathOperator{\coker}{coker}
\newcommand{\graphfont}{\mathsf}
\newcommand{\thetagraph}[1]{\graphfont{\Theta}_{#1}}
\newcommand{\stargraph}[1]{\graphfont{S}_{#1}}
\newcommand{\graf}{\graphfont{\Gamma}}
\newcommand{\lollipopgraph}[1]{\graphfont{L}_{#1}}
\newcommand{\cC}{\mathcal{C}}
\DeclareSymbolFont{sfletters}{OT1}{cmss}{m}{n}
\DeclareMathSymbol{\sTheta}{\mathord}{sfletters}{"02}
\theoremstyle{definition}
\newtheorem{definition}{Definition}[section]
\newtheorem{example}[definition]{Example}
\newtheorem{construction}[definition]{Construction}
\newtheorem{observation}[definition]{Observation}
\theoremstyle{plain}
\newtheorem{proposition}[definition]{Proposition}
\newtheorem{lemma}[definition]{Lemma}
\newtheorem{corollary}[definition]{Corollary}
\newtheorem{theorem}[definition]{Theorem}
\theoremstyle{remark}
\newtheorem{remark}[definition]{Remark}
\newtheorem{question}[definition]{Question}
\newtheorem{problem}[definition]{Problem}
    \DeclareFontFamily{U}{wncy}{}
    \DeclareFontShape{U}{wncy}{m}{n}{<->wncyr10}{}
    \DeclareSymbolFont{mcy}{U}{wncy}{m}{n}
    \DeclareMathSymbol{\Sha}{\mathord}{mcy}{"58}
\newsavebox{\foobox}
\title{Asymptotic homology of graph braid groups}
\author{Byung Hee An}
\email{anbyhee@knu.ac.kr}
\address{Department of Mathematics Education, Teachers College, Kyungpook National University, Daegu, South Korea}
\author{Gabriel C. Drummond-Cole}
\author{Ben Knudsen}
\begin{document}
\begin{abstract}
We give explicit formulas for the asymptotic Betti numbers of the unordered configuration spaces of an arbitrary finite graph over an arbitrary field.
\end{abstract}

\maketitle

\section{Introduction}

The (co)homology of configuration spaces is a classical topic of perennial interest \cite{Arnold:CRGDB, BodigheimerCohenTaylor:OHCS,Totaro:CSAV,FelixThomas:RBNCS,Church:HSCSM,DrummondColeKnudsen:BNCSS}. This article is concerned with the homology of configuration spaces of graphs \cite{FarleySabalka:DMTGBG,KoPark:CGBG,MaciazekSawicki:HGP1CG,ChettihLuetgehetmann:HCSTL,Ramos:SPHTBG,AnDrummondColeKnudsen:SSGBG}, which for our purposes are simply finite $1$-dimensional cell complexes. For a graph $\graf$, we write \[B_k(\graf)=\{(x_1,\ldots, x_k)\in\graf^k:x_i\neq x_j\text{ if } i\neq j\}/\Sigma_k\] for the $k$th unordered configuration space of $\graf$. These spaces classify their fundamental groups, the graph braid groups \cite{Abrams:CSBGG}, so this homology is also group homology. 

Experience has shown that Betti numbers of configurations spaces are profitably viewed as functions of $k$. Fixing a homological degree, our main result asserts that the resulting function is asymptotic to a simple, explicit formula in the combinatorics of $\graf$ (in the sense that the ratio of the two functions tends to $1$ as $k\to\infty$).

Recall that the degree or valence $d(w)$ of a vertex $w$ is the number of components of its complement in a small neighborhood, and $w$ is called essential if $d(w)\geq3$. Given a set of essential vertices $W$, we write $\Delta^W_\graf$ for the number of components of the complement of $W$ in $\graf$. The $i$th \emph{Ramos number} of $\graf$ is defined as the maximum $\Delta^i_\graf=\max_{|W|=i}\Delta^W_\graf$ of these numbers \cite{Ramos:SPHTBG}.

\begin{theorem}\label{thm:growth}
Fix a field $\mathbb{F}$ and $i\geq0$. If $\graf$ is a connected graph with an essential vertex and $\Delta^i_\graf>1$, then 
\[\dim H_i(B_k(\graf);\mathbb{F})\sim \sum_{W}\frac{1}{(\Delta^i_\graf-1)!}\prod_{w\in W} (d(w)-2)k^{\Delta^i_\graf-1},\] where $W$ ranges over sets of essential vertices of cardinality $i$ such that $\Delta^W_\graf=\Delta^i_\graf$.
\end{theorem}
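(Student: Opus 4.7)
The plan is to compute $H_i(B_k(\graf); \field)$ via an explicit combinatorial chain model and extract its leading-order asymptotic in $k$ through a careful enumeration of generators.

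I would work with a sufficiently reduced state complex for the unordered configuration space, for instance the fully reduced complex $\fullyreduced{\graf}$ of \cite{AnDrummondColeKnudsen:SSGBG}, whose generators in homological degree $i$ decompose according to the set $W$ of essential vertices at which a particle sits. For each subset $W$ with $|W| = i$, the generators on the stratum indexed by $W$ carry two further data: (a) a local decoration at each $w \in W$, which after reduction contributes a combinatorial multiplicity of $d(w) - 2$ (matching the reduced homology of a star graph at $w$), and (b) a distribution of the remaining $k - i$ particles among the edges of $\graf \setminus W$.

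The count in (b), after the chain-level identifications that collapse edges lying in a single connected component of $\graf \setminus W$, is a weak-composition count
\[
\binom{k - i + \Delta^W_\graf - 1}{\Delta^W_\graf - 1} \sim \frac{k^{\Delta^W_\graf - 1}}{(\Delta^W_\graf - 1)!}.
\]
Multiplying by the product $\prod_{w \in W}(d(w) - 2)$ of decoration multiplicities and summing over subsets $W$ with $|W| = i$, only those $W$ achieving $\Delta^W_\graf = \Delta^i_\graf$ survive at polynomial order $k^{\Delta^i_\graf - 1}$; contributions from subsets with strictly smaller $\Delta^W_\graf$ are absorbed into $o(k^{\Delta^i_\graf - 1})$, and the resulting sum reproduces the formula in the statement.

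The principal difficulty is passing from a chain-level generator count to a Betti-number count: a priori the differential could destroy leading-order cycles or exhibit them as boundaries, and one must verify that any such correction is $O(k^{\Delta^i_\graf - 2})$. I would argue this by filtering the reduced complex so that the associated graded isolates the top-weight stratum indexed by those $W$ with $\Delta^W_\graf = \Delta^i_\graf$, and then checking that within this stratum the differential either vanishes for structural reasons or lands in a combinatorially much smaller target (because any boundary must introduce an additional essential-vertex decoration or merge two components, both of which strictly decrease the polynomial order in $k$). Making this reduction-and-filtration step precise, by exploiting the combinatorial structure of the chain model from \cite{AnDrummondColeKnudsen:SSGBG}, is where the technical heart of the argument lies.
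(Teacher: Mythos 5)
Your heuristic count is on the right track and indeed recovers the formula that the paper establishes in Theorem~\ref{thm:counting tori} and Proposition~\ref{prop:local count}: the weak-composition count $\binom{k-i+\Delta^W_\graf-1}{\Delta^W_\graf-1}$ is exactly $\dim \mathbb{F}[\pi_0(\graf_W)]_{k-2|W|}$ up to a shift, and $\prod_w(d(w)-2)$ is the cardinality of the set $A_W$ from Construction~\ref{construction:good tori}. But the justification you offer for this count is misplaced, and the step you flag as ``the principal difficulty'' is left as a gap that your proposed filtration does not close.

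Two specific problems. First, the factor $d(w)-2$ is not a chain-level quantity. In the reduced complex $\fullyreduced{\graf}$, the space of half-edge differences at an essential vertex $w$ has rank $d(w)-1$, not $d(w)-2$; and the naive count of star classes at $w$ is $\binom{d(w)}{3}$. The reduction to $d(w)-2$ independent classes is a \emph{homological} fact that comes from the $X$-relation (Corollary~\ref{cor:combined X}) and is established via Lemma~\ref{lem:quotient by good tori}: after imposing $(e_1-e_2)[\alpha]=0$ on the quotient $T_W(\graf)/A_W(\graf)$, the module collapses to something supported on a coarser partition, hence of strictly lower growth. There is no way to see $d(w)-2$ ``after reduction'' at the level of generators of the chain complex. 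Second, and more seriously, the filtration you describe is essentially the filtration by $W$-weight already considered in Section~\ref{section:vertex explosion}, whose $E^1$-page in top filtration degree is $R[\pi_0(\graf_W)]\langle X\rangle$ with $|X|=\prod_w(d(w)-1)$ (Proposition~\ref{prop:spectral sequence}). Since $E^\infty_{|W|,0}\subseteq E^1_{|W|,0}$, this gives only an over-estimate with the wrong constant, and the paper explicitly calls the higher differentials of this spectral sequence ``mysterious.'' Your sketch --- ``the differential either vanishes for structural reasons or lands in a combinatorially much smaller target'' --- is precisely the thing one cannot verify directly; if one could, the paper's entire Section~3 would be unnecessary.

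The paper's route around this obstruction is the genuine idea missing from your proposal: rather than controlling differentials in a spectral sequence, show (Theorem~\ref{thm:tame}) that the quotient $H_i(B(\graf))/T_i(\graf)$ by the tori submodule belongs to the Serre subcategory of $i$-tame modules, which by Proposition~\ref{prop:tame growth} is automatically $(\Delta^i_\graf-1)$-small. The tameness is proved by induction on $b_1(\graf)$ via vertex explosion (Proposition~\ref{prop:vertex explosion}): the base case of a tree is trivial, and the inductive step requires transporting module-theoretic control across the long exact sequence, which is exactly why tameness (a geometric, partition-indexed condition, rather than a bare growth estimate) is the right notion --- see Lemmas~\ref{lem:tame quotient} and~\ref{lem:tame tori}. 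None of this inductive mechanism appears in your proposal, and without it your argument terminates at the honest acknowledgment that passing from a chain-level count to a Betti number is the hard part.
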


As the authors have previously shown, the $i$th Betti number of $B_k(\graf)$ is eventually a polynomial of degree $\Delta^i_\graf-1$ \cite{AnDrummondColeKnudsen:ESHGBG}, so the theorem amounts to the calculation of the leading coefficient of this polyomial. 

We comment briefly on our hypotheses. The inequality $\Delta^i_\graf> 1$ always holds except sometimes in the case $i=1$, where it is possible that $\Delta^1_\graf=1$. This case is completely understood by work of Ko--Park \cite{KoPark:CGBG}---in particular, the conclusion of the theorem is known not to hold when $\Delta^i_\graf=1$. The case of a disconnected graph follows in light of the K\"{u}nneth formula, and a connected graph without an essential vertex is homeomorphic to a point, an interval or a circle, whose configuration spaces are easily understood.

We now contextualize Theorem \ref{thm:growth} within three lines of research of independent interest.

\subsection{Homological stability} The idea of studying the homology of configuration spaces asymptotically is an old one, with roots in the scanning and group completion techniques of McDuff \cite{McDuff:CSPNP} and Segal \cite{Segal:CSILS} and present already in the very earliest computations \cite{Arnold:CRGDB}. In more recent years, this idea has flowered into the study of homological stability and stability phenomena in general \cite{ChurchEllenbergFarb:FIMSRSG}. The analogue of classical homological stability \cite{McDuff:CSPNP,Church:HSCSM} in the context of configuration spaces of graphs is the aforementioned fact that the Betti numbers are eventually equal to polynomials in the number of particles. 

Given homological stability, several questions arise. Theorem \ref{thm:growth} is an answer to the analogue of one such question: what is the stable homology? We pose two more, the first also purely computational.

\begin{question}
What is the stable range? That is, when does the $i$th Betti number of $B_k(\graf)$ begin to equal a polynomial in $k$?
\end{question}

The second question is more conceptual, and we can only formulate it vaguely.

\begin{question}
What does the stable homology represent? Does an analogue of scanning or group completion apply?
\end{question}

\subsection{Universal presentations}\label{section:universal presentations} The first interesting example in the homology of configuration spaces of graphs is the \emph{star class}, in which two particles orbit one another by passing successively through an essential vertex (see Section \ref{section:loops stars relations}). Performing the same local move simultaneously at different essential vertices, perhaps with the addition of stationary particles on edges, produces a large family of toric classes in homology. 

It was once thought that, at least rationally, all homology might be generated by such tori, together with the homology of the graph itself. This idea was put to rest by the discovery that the configuration space of $3$ unordered particles in the graph obtained by suspending $4$ points is a compact orientable surface of genus $3$, up to homotopy \cite{WiltshireGordon:MCSSC,ChettihLuetgehetmann:HCSTL}. Other exotic homology classes have since been discovered, and we refer to the resulting zoology problem as the problem of \emph{universal generation.}

\begin{problem}\label{problem:universal generation}
Give a finite list of atomic graphs providing generators for the homology of the configuration spaces of all graphs (perhaps within a certain class) in fixed degree.
\end{problem}

For example, star classes together with cycles in the graph itself generate in degree $1$ \cite{KoPark:CGBG}; toric classes generate in all degrees for trees and wheel graphs \cite{MaciazekSawicki:NAQSG} (see \cite{ChettihLuetgehetmann:HCSTL} for a related result in the ordered context); and, as the authors show in forthcoming work, the only ``exotic'' generator in degree $2$ is the aforementioned fundamental class of the surface of genus $3$, at least for planar graphs \cite{AnDrummondColeKnudsen:OSHPGBG}. Abstractly, it is known that Problem \ref{problem:universal generation} has a solution if one allows the formation of minors as well as subgraphs \cite{MiyataProudfootRamos:CGMT}.

In the course of proving Theorem \ref{thm:growth}, we show that the dimension of $H_i(B_k(\graf);\mathbb{F})$ is asymptotic to the dimension of the submodule spanned by toric classes satisfying a certain rigidity condition (see Section \ref{section:tori}); indeed, it is the combinatorics of this submodule that account for the asymptotic formula. Thus, although tori are not universal generators, \emph{they are so asymptotically}.

The result also touches on the companion problem of universal \emph{relations}, where it asserts that the only non-obvious relation among these asymptotic generators is the $X$-\emph{relation} of Lemma \ref{lem:stable X}.

\subsection{Torsion}\label{section:torsion} Elements of finite order in the homology of configuration spaces of graphs are exceedingly rare. At the level of ordered configuration spaces, no examples are known, and it has been proven that none exist for certain limited classes of graphs \cite{ChettihLuetgehetmann:HCSTL}. In the unordered case of interest to us, Kim--Ko--Park have shown that $2$-torsion in the first homology of the configuration space of two points detects planarity \cite{KimKoPark:GBGRAAG}. In the non-planar case, the size of the $2$-torsion subgroup is computable in terms of invariants of the background graph \cite{KoPark:CGBG}, and this $2$-torsion propagates to higher degrees and larger configurations via disjoint union and the addition of stationary particles to edges. Apart from this single source of torsion, essentially nothing is known.

\begin{question}
Is there nontrivial odd torsion in the homology of graph braid groups? Are there elements of even order greater than $2$?
\end{question}

To see the connection between this question and Theorem \ref{thm:growth}, recall that, by the universal coefficient theorem, a reasonably finite space has torsion-free homology if and only if its Betti numbers are independent of the coefficient field. For this reason, Theorem \ref{thm:growth} should be interpreted as implying that \emph{the homology of graph braid groups is asymptotically torsion-free}. 

One way of making this last statement precise is the following result, which follows immediately from Theorem \ref{thm:growth}, the universal coefficient theorem, and induction on $i$ using the observation that $\Delta^i_\graf\geq\Delta^{i-1}_\graf+1$.

\begin{corollary}
Let $\graf$ be a connected graph with an essential vertex and $\Delta^i_\graf>1$. For any prime $p$ and $k$ sufficiently large, the order of the $p$-torsion subgroup of $H_i(B_k(\graf);\mathbb{Z})$ is equal to $p^{f(k)}$, where $f(k)\in\mathbb{Q}[k]$ has degree strictly less than $\Delta^i_\graf-1$.
\end{corollary}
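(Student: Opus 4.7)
The plan is to apply the universal coefficient theorem prime-by-prime, exploit the field-independence of the leading coefficient in Theorem \ref{thm:growth}, and close with a short induction on $i$.

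For a prime $p$ and each $j$, let $\tau_j(p,k)$ denote the number of cyclic summands of $H_j(B_k(\graf);\mathbb{Z})$ whose order is divisible by $p$; equivalently, $p^{\tau_j(p,k)}$ is the order of the subgroup of $p$-torsion elements of $H_j(B_k(\graf);\mathbb{Z})$. The split universal coefficient theorem identifies
\[
\dim_{\mathbb{F}_p}H_j(B_k(\graf);\mathbb{F}_p)-\dim_{\mathbb{Q}}H_j(B_k(\graf);\mathbb{Q})=\tau_j(p,k)+\tau_{j-1}(p,k).
\]
By the stability theorem of \cite{AnDrummondColeKnudsen:ESHGBG} (cited in the paragraph following Theorem \ref{thm:growth}), both terms on the left are eventually polynomial in $k$ of degree $\Delta^j_\graf-1$, and by Theorem \ref{thm:growth} these polynomials share a common leading coefficient whenever $\Delta^j_\graf>1$, since the formula asserted there does not depend on the coefficient field. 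Consequently, the sum $\tau_j(p,k)+\tau_{j-1}(p,k)$ is eventually polynomial in $k$ of degree strictly less than $\Delta^j_\graf-1$ whenever $\Delta^j_\graf>1$.

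I would then induct on $j$ with the statement that $\tau_j(p,k)$ is eventually polynomial in $k$ of degree at most $\Delta^j_\graf-1$, with strict inequality whenever $\Delta^j_\graf>1$. The base case $j=0$ is immediate since $H_0(B_k(\graf);\mathbb{Z})$ is free, giving $\tau_0\equiv 0$. For the inductive step with $\Delta^j_\graf>1$, the displayed identity writes $\tau_j$ as an eventually polynomial expression minus $\tau_{j-1}$; the inductive hypothesis combined with the cited Ramos inequality $\Delta^{j-1}_\graf\leq\Delta^j_\graf-1$ bounds $\deg\tau_{j-1}\leq\Delta^{j-1}_\graf-1\leq\Delta^j_\graf-2$, whence $\deg\tau_j<\Delta^j_\graf-1$. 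Setting $j=i$ and $f(k)=\tau_i(p,k)$ completes the proof.

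The only substantive input beyond Theorem \ref{thm:growth} and standard homological algebra is the combinatorial inequality $\Delta^j_\graf\geq\Delta^{j-1}_\graf+1$ under the hypothesis $\Delta^j_\graf>1$; I would verify this by taking an optimal size-$j$ essential-vertex set $W$ and observing that removing any $w\in W$ strictly decreases the component count, because edges from $w$ to other vertices of $W$ produce isolated open-interval components of $\graf\setminus W$ which vanish upon re-addition of $w$, and in the absence of such edges $w$ must be a local cut point of its component in $\graf\setminus(W\setminus\{w\})$ by its essential valence. A small bookkeeping point to be attended to is the edge case $\Delta^{j-1}_\graf=1$ in the inductive hypothesis (where $\tau_{j-1}$ is only known to be eventually bounded by a constant), but this is still dominated by $\Delta^j_\graf-2\geq 0$ and so causes no trouble.
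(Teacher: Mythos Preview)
Your main argument is exactly the paper's: universal coefficients, field-independence of the leading term in Theorem~\ref{thm:growth}, and induction on the degree using the Ramos inequality $\Delta^j_\graf\geq\Delta^{j-1}_\graf+1$. The body of the proof is correct.

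Your sketch of the Ramos inequality, however, has two problems. First, the logical direction is backwards: starting from an optimal size-$j$ set $W$ and showing that removing any $w$ decreases the component count only bounds $|\pi_0(\graf_{W\setminus\{w\}})|$ from above for \emph{those particular} sets of size $j-1$; it says nothing about an arbitrary optimal size-$(j-1)$ set $W'$, which is what $\Delta^{j-1}_\graf$ measures. You should instead start from an optimal $W'$ of size $j-1$ and exhibit an essential $w\notin W'$ with $|\pi_0(\graf_{W'\cup\{w\}})|>|\pi_0(\graf_{W'})|$. Second, your assertion that an essential vertex with no edges to $W\setminus\{w\}$ ``must be a local cut point\ldots by its essential valence'' is false: any vertex of $K_4$ is essential but not a cut point. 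A correct argument runs as follows: after smoothing bivalent vertices, if $W'\neq\varnothing$ and some essential vertex lies outside $W'$, then connectedness of $\graf$ forces some essential $w\notin W'$ to be adjacent to $W'$ (otherwise $W'$ together with its degree-$1$ neighbors would be a proper clopen subgraph); the edge from $w$ to $W'$ then becomes an isolated interval in $\graf_{W'\cup\{w\}}$, increasing the component count. The remaining case $W'=\varnothing$ gives $j=1$, which you already handle separately via $\tau_0\equiv 0$.
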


As indicated above, the authors are aware of no other systematic, quantitative result on odd torsion in the homology of graph braid groups.

\subsection{Idea of proof} The proof of Theorem \ref{thm:growth} proceeds in two steps. First, we show that the toric classes mentioned above account for most of the homology of the configuration spaces of a graph (Theorem \ref{thm:tame}). Second, and easier, we count tori (Theorem \ref{thm:counting tori}).

In pursuing the first task, an argument by induction on the first Betti number of $\graf$ suggests itself, since tori account for everything in the case of a tree. This inductive argument is facilitated by \emph{vertex explosion}, a technique whereby the configuration spaces of $\graf$ are related to those of a simpler graph by a long exact sequence (see Section \ref{section:vertex explosion}). In order to achieve the inductive step, we require more than knowledge of the rates of growth of the modules in question; we must be assured that these rates of growth obtain \emph{for specific geometric reasons}. This fine control is expressed in the concept of \emph{tameness}, our principal technical innovation (see Section \ref{section:tameness}).

\subsection{Relation to previous work} A stability phenomenon in $k$ for $H_i(B_k(\graf))$ was first observed by Ko--Park in the case $i=1$ \cite{KoPark:CGBG}. Ramos subsequently observed a generalization of this phenomenon for all $i$ in the case where $\graf$ is a tree, and he further identified the invariant $\Delta_\graf^i$ controlling the degree of polynomial growth in this case \cite{Ramos:SPHTBG}. The authors interpreted this stability phenomenon geometrically in terms of \emph{edge stabilization} (see Section \ref{section:swiatkowski}), permitting its generalization to all graphs, and computed the degree of growth in general \cite{AnDrummondColeKnudsen:ESHGBG}. The first author formulated Theorem \ref{thm:growth} as a conjecture in the summer of 2019 and presented it at the AIM workshop ``Configuration spaces of graphs'' in early 2020. The extensive computer calculations of the second author provided evidence for the conjecture \cite{DrummondCole:BNUCSSG}.

\subsection{Acknowledgments} 
The first author was supported by the National Research Foundation of Korea(NRF) grant funded by the Korea government(MSIT) (No. 2020R1A2C1A01003201).
The third author was supported by NSF grant DMS 1906174. The authors benefited from the hospitality of the American Institute of Mathematics during the workshop ``Configuration spaces of graphs.''

\subsection{Conventions}\label{section:conventions} Given functions $f,g:\mathbb{Z}_{\geq0}\to\mathbb{Z}_{\geq0}$, we say that $f$ and $g$ are asymptotic, written $f\sim g$, provided both are eventually nonzero and \[
\lim_{k\to\infty}\frac{f(k)}{g(k)}=1.
\]

We work over a fixed ground ring $R$, which we take to be a Noetherian integral domain. At times, we require the ground ring to be a field, writing $\mathbb{F}$ instead. Homology is taken implicitly with coefficients in the ground ring. The symbol $R\langle X\rangle$ denotes the free $R$-module on the set $X$.

All gradings are non-negative. Given a degreewise finite dimensional graded $\mathbb{F}$-vector space $V$, we write $\dim V$ for the function $k\mapsto \dim V_k$, regarded as a function from $\mathbb{Z}_{\geq0}$ to itself.

\section{Preliminaries}

In this section, we collect the facts and tools used in the proof of Theorem \ref{thm:growth}. Although largely a review, it contains a few crucial observations not appearing elsewhere, namely Corollary \ref{cor:combined X}, Lemma \ref{lem:good tori free module}, and Corollary \ref{cor:inject into associated graded}.

\subsection{The \'{S}wi\k{a}tkowski complex}\label{section:swiatkowski} This section is a brief summary of some necessary terminology and results from \cite{AnDrummondColeKnudsen:ESHGBG}.

A graph is a finite $1$-dimensional CW complex, whose $0$-cells and open $1$-cells are called vertices and edges, respectively. A contractible graph is called a tree. A half-edge is a point in the preimage of a vertex under the attaching map of a $1$-cell;
thus, every edge determines two half-edges. In general, sets of vertices, edges, and half-edges are denoted $V(\graf)$, $E(\graf)$, and $H(\graf)$, respectively, but we omit $\graf$ from the notation wherever doing so causes no ambiguity.

A half-edge $h$ has an associated vertex $v(h)$ and an associated edge $e(h)$, and we write $H(v)=\{h\in H: v=v(h)\}$ for the set of half-edges incident on $v\in V$. The degree or valence of $v$ is $d(v)=|H(v)|$. A vertex is essential if its valence is at least $3$, and an edge is a tail if its closure contains a vertex of valence $1$.

A subgraph is a subcomplex of a graph. A graph morphism is a finite composition of isomorphisms onto subcomplexes and inverse subdivisions---see \cite[\S2.1]{AnDrummondColeKnudsen:ESHGBG} for details.

\begin{definition}
Let $\graf$ be a graph. For $v\in V$, write $S(v)=\mathbb{Z}\langle\varnothing, v, h\in H(v)\rangle$. The \emph{\'{S}wi\k{a}tkowski complex} (with coefficients in $R$) is the $R[E]$-module \[\intrinsic{\graf}=R[E]\otimes_\mathbb{Z}\bigotimes_{v\in V} S(v),\] equipped with the bigrading $|\varnothing|=(0,0)$, $|v|=|e|=(0,1)$, and $|h|=(1,1)$, together with the differential determined by the equation $\partial(h)=e(h)-v(h)$.
\end{definition}

When denoting elements of $\intrinsic{\graf}$, we systematically omit all factors of $\varnothing$ and all tensor symbols, and we regard half-edge generators at different vertices as permutable up to sign. See \cite[\S2.2]{AnDrummondColeKnudsen:ESHGBG} for further discussion of $\intrinsic{\graf}$ and its elements.

Write $B(\graf)=\bigsqcup_{k\geq0}B_k(\graf)$. The following result is \cite[Thm. 2.10]{AnDrummondColeKnudsen:ESHGBG}, but see \cite{Swiatkowski:EHDCSG} and \cite{ChettihLuetgehetmann:HCSTL} for precursors.

\begin{theorem}
There is a natural isomorphism of bigraded $R[E]$-modules \[H_*(B(\graf))\cong H_*(\intrinsic{\graf}).\]
\end{theorem}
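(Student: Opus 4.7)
The plan is to leverage Abrams' classical reduction to a discretized cube complex and then perform a chain-level simplification whose output matches the defining tensor-product structure of $\intrinsic{\graf}$. For each $k$, choose a subdivision $\graf_n$ of $\graf$ in which each edge is broken into $n$ pieces with $n$ large compared to $k$; by Abrams' theorem the inclusion of the discrete unordered configuration space $UD_k(\graf_n)$ into $B_k(\graf)$ is a homotopy equivalence, so I may work with $UD_k(\graf_n)$ and assemble over $k$ at the end. A cell of $UD_k(\graf_n)$ is an unordered collection of pairwise disjoint closed $1$-cells or $0$-cells of $\graf_n$, graded by the number of sub-edges it contains.

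Next I would decompose the cellular chain complex of $UD_k(\graf_n)$ according to how the particles distribute among the original vertices and the interiors of the original edges. At each $v \in V(\graf)$ the local options are exactly: no particle near $v$, a particle at $v$ itself, or a particle on a sub-edge immediately incident to $v$ along some half-edge $h \in H(v)$; at each $e \in E(\graf)$ the interior sub-edges supply the combinatorics of configurations on an open interval. The key technical ingredient is a compatible choice of discrete Morse matchings on these local pieces: one that collapses each vertex contribution onto the three generators $\varnothing, v, h$ of $S(v)$, and one that collapses each edge contribution onto the free polynomial module $R[e]$ whose monomial $e^j$ records $j$ particles sitting on the interior of $e$. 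The bigrading $|e|=|v|=(0,1)$ and $|h|=(1,1)$ emerges directly from cell dimension and particle count.

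With these local matchings in place, the critical cells are precisely monomials in $R[E]\otimes_{\mathbb{Z}}\bigotimes_v S(v)$, and a check on a half-edge generator gives $\partial(h)=e(h)-v(h)$ from the two ways of removing the sub-edge containing the associated particle (sliding it onto the vertex $v(h)$ or into the interior of the edge $e(h)$). Naturality in $\graf$ will come for free provided the matchings are defined locally and respect inverse subdivisions and inclusions of subcomplexes. The $R[E]$-module structure on the right corresponds on the left to edge stabilization, which increases $k$ by adding a fresh particle to the interior of an edge. The hardest part of the proof is not the existence of local reductions, whose ingredients are elementary, but the bookkeeping needed to show that the matchings at different vertices and edges are mutually compatible and give a chain-level quasi-isomorphism independent of the subdivision parameter $n$; handling this cleanly, e.g. via a single acyclic matching on $UD_k(\graf_n)$ whose critical cells are in explicit bijection with the Świątkowski monomials of total weight $k$, is what makes the isomorphism natural in $\graf$ rather than just true degreewise.
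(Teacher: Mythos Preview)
The paper does not prove this theorem; it is quoted as \cite[Thm.~2.10]{AnDrummondColeKnudsen:ESHGBG}, with precursors in \cite{Swiatkowski:EHDCSG} and \cite{ChettihLuetgehetmann:HCSTL}, and no argument is given here beyond the citation. So there is no ``paper's own proof'' to compare against in this document.

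As for your outline: the strategy of passing to Abrams' cubical model $UD_k(\graf_n)$ and then collapsing via a discrete Morse matching whose critical cells are the \'{S}wi\k{a}tkowski monomials is a legitimate route and is in the spirit of how results of this type are established. You have correctly identified the nontrivial points: compatibility of the local matchings, independence of the subdivision parameter $n$, and naturality under graph morphisms. One point to flag more explicitly is that $\intrinsic{\graf}$ is defined in terms of the \emph{original} graph $\graf$, while your chain model lives over the subdivision $\graf_n$; you will need either a direct identification $\intrinsic{\graf_n}\simeq\intrinsic{\graf}$ (which is true---subdivision invariance is part of the functoriality package) or a Morse collapse that lands in $\intrinsic{\graf}$ rather than $\intrinsic{\graf_n}$. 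A second point is that the $R[E]$-module structure and the edge stabilization maps must be checked to match \emph{through} the Morse collapse, not just on homology groups; your sketch asserts this but does not indicate how the matching interacts with stabilization. These are bookkeeping obstacles rather than conceptual ones, and you have flagged them, but a referee would want to see them carried out or a precise citation given. Since the present paper simply imports the result, the cleanest option is to do the same and point to \cite{AnDrummondColeKnudsen:ESHGBG} for the details.
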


Two comments are in order. First, the action of $R[E]$ on the lefthand side arises from an $E$-indexed family of \emph{edge stabilization maps}. Stabilization at $e$ replaces the subconfiguration of particles lying in the closure of $e$ with the collection of averages of consecutive particles and endpoints---see Figure \ref{fig:edge stabilization} and \cite[\S2.2]{AnDrummondColeKnudsen:ESHGBG}. Second, regarding the implied functoriality, we direct the reader to \cite[\S2.3]{AnDrummondColeKnudsen:ESHGBG}.

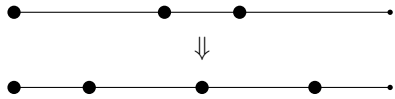
\begin{figure}[ht]
\begin{tikzpicture}
\fill[black] (0,0) circle (2.5pt);
\fill[black] (5,0) circle (1pt);
\fill[black] (2,0) circle (2.5pt);
\fill[black] (3,0) circle (2.5pt);
\draw(0,0) -- (5,0);
\draw(2.5,-.2) node[below]{$\Downarrow$};
\begin{scope}[yshift=-1cm]
\fill[black] (0,0) circle (2.5pt);
\fill[black] (5,0) circle (1pt);
\fill[black] (1,0) circle (2.5pt);
\fill[black] (2.5,0) circle (2.5pt);
\fill[black] (4,0) circle (2.5pt);
\draw(0,0) -- (5,0);
\end{scope}
\end{tikzpicture}
\caption{Edge stabilization}\label{fig:edge stabilization}
\end{figure}

The \emph{reduced} \'{S}wi\k{a}tkowski complex is obtained by replacing $S(v)$ in the definition of $\intrinsic{\graf}$ with the submodule $\fullyreduced{v}\subseteq S(v)$ spanned by $\varnothing$ and the differences of half-edges. The inclusion $\fullyreduced{\graf}\subseteq \intrinsic{\graf}$ is a quasi-isomorphism as long as $\graf$ has no isolated vertices \cite[Prop. 4.9]{AnDrummondColeKnudsen:SSGBG}. Note that, for any $h_1\in H(v)$, a basis for $\fullyreduced{v}$ is given by $\{\varnothing\}\cup \{h-h_1\}_{h\in H(v)\setminus \{h_1\}}$. In this way, a (non-canonical) basis for $\fullyreduced{\graf}$ may be obtained.

\subsection{Exploding vertices}\label{section:vertex explosion}

Given a graph $\graf$ and $v\in V$, we write $\graf_v$ for the graph obtained by exploding the vertex $v$---see Figure \ref{fig:vertex explosion} and \cite[Def. 2.12]{AnDrummondColeKnudsen:ESHGBG}---which we regard as a subgraph of a subdivision of $\graf$, uniquely up to isotopy. In general, there is a long exact sequence relating the homology of $B(\graf)$ with that of $B(\graf_v)$ \cite[Prop. 2.3]{AnDrummondColeKnudsen:ESHGBG}. We state only the special case we require.\\

\begin{figure}[ht]
\begin{tikzpicture}
\begin{scope}
\fill[black] (0,0) circle (2.5pt);
\draw(-1,0) -- (1,0);
\draw(0,-.2) node[below]{$\graf$};
\end{scope}
\begin{scope}[xshift=4cm]
\fill[black] (-.3,0) circle (2.5pt);
\fill[black] (.3,0) circle (2.5pt);
\draw(-1,0) -- (-.4,0);
\draw(1,0) -- (.4,0);
\draw(0,-.2) node[below]{$\graf_v$};
\end{scope}
\end{tikzpicture}
\caption{A local picture of vertex explosion}
\label{fig:vertex explosion}
\end{figure}
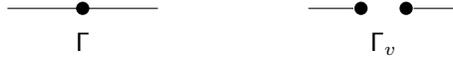

\begin{proposition}\label{prop:vertex explosion}
If $v\in V$ is a bivalent vertex with corresponding edges $e,e'\in E$ and half-edges $h,h'\in H(v)$, then the sequence \[\cdots\to H_i(B_k(\graf_v))\xrightarrow{\iota} H_i(B_k(\graf))\xrightarrow{\psi} H_{i-1}(B_{k-1}(\graf_v))\xrightarrow{e-e'} H_{i-1}(B_k(\graf_v))\to\cdots\] is exact. Here, 
\begin{enumerate}
\item the map $\iota$ is induced by the inclusion of $\graf_v$,
\item the map $\psi$ is induced by the chain map on reduced \'{S}wi\k{a}tkowski complexes sending $\beta+(h-h')\alpha$ to $\alpha$, where $\beta$ involves no half-edge generators at $v$, and
\item the map $e-e'$ is multiplication by the ring element $e-e'\in R[E]$.
\end{enumerate} Moreover, all maps shown are compatible with edge stabilization.
\end{proposition}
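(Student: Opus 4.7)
The plan is to realize the stated long exact sequence as the homological long exact sequence of an explicit short exact sequence of reduced Świątkowski complexes. The starting observation is that, since $v$ is bivalent, the local factor $\fullyreduced{v}$ is free on the two generators $\varnothing$ (of bidegree $(0,0)$) and $h - h'$ (of bidegree $(1,1)$), whereas in $\graf_v$ the vertex $v$ is replaced by two univalent vertices, each contributing a local factor spanned only by $\varnothing$. Setting $A := R[E]\otimes \bigotimes_{w\neq v}\fullyreduced{w}$, this gives identifications
\[\fullyreduced{\graf_v} \cong A,\qquad \fullyreduced{\graf} = A \oplus A\cdot(h-h'),\]
of bigraded $R[E]$-modules, where the second summand on the right is shifted in bidegree by $(1,1)$.

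First, I will check that $A \subseteq \fullyreduced{\graf}$ is a subcomplex; this is immediate because the differential never introduces new half-edge generators at $v$. Computing
\[\partial\bigl(\alpha(h-h')\bigr) = \partial(\alpha)(h-h') + (-1)^{|\alpha|}\alpha(e-e'),\]
the second summand lies in $A$ and therefore vanishes in the quotient $\fullyreduced{\graf}/A$. It follows that the quotient, with its induced differential, is isomorphic to $\fullyreduced{\graf_v}$ shifted in bidegree by $(1,1)$, producing a short exact sequence of bigraded complexes of $R[E]$-modules
\[0 \to \fullyreduced{\graf_v} \to \fullyreduced{\graf} \to \fullyreduced{\graf_v}[1,1] \to 0.\]

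The associated long exact sequence in homology has the desired shape once one accounts for the shift: at particle count $k$, the $i$th homology of the quotient is $H_{i-1}(B_{k-1}(\graf_v))$, while the $(i-1)$st homology of the subcomplex is $H_{i-1}(B_k(\graf_v))$. The three map identifications then follow: the inclusion $\iota$ matches the functorial map induced by $\graf_v \hookrightarrow \graf$ (regarded as a subgraph of a subdivision of $\graf$) via naturality of the reduced Świątkowski construction; the projection $\psi$ has the stated form $\beta + (h-h')\alpha \mapsto \alpha$ by construction; and the connecting homomorphism, computed by lifting $\alpha(h-h')$ and applying $\partial$, sends the corresponding class to $(-1)^{|\alpha|}[\alpha(e-e')]$, identifying it with multiplication by $e-e'$ up to a sign that can be absorbed into the choice between $h-h'$ and $h'-h$. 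Compatibility with edge stabilization is automatic: every complex and every morphism constructed is $R[E]$-linear by design, and edge stabilization is precisely the $R[E]$-action.

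The main obstacle is bookkeeping rather than mathematics: tracking the bidegree shift so that particle counts match the stated sequence, and matching signs so that the connecting homomorphism is honest multiplication by $e-e'$ rather than $-(e-e')$. The essential content is the algebraic observation that $\fullyreduced{v}$ is generated by $\varnothing$ and $h-h'$, together with the Leibniz computation $\partial(h-h') = e - e'$.
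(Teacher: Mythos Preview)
The paper does not prove this proposition; it is quoted from the authors' earlier work \cite[Prop.~2.3]{AnDrummondColeKnudsen:ESHGBG}, where the general vertex-explosion long exact sequence is established. Your argument is the natural one and is correct in substance: the decomposition $\fullyreduced{v}=R\langle\varnothing\rangle\oplus R\langle h-h'\rangle$ for a bivalent vertex yields the short exact sequence $0\to\fullyreduced{\graf_v}\to\fullyreduced{\graf}\to\fullyreduced{\graf_v}[1,1]\to0$, and the long exact sequence in homology is the one stated. This is essentially how the cited proof proceeds as well.

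One small correction on signs. Your claim that the sign $(-1)^{|\alpha|}$ in the connecting homomorphism can be absorbed by switching $h-h'$ to $h'-h$ is not quite right: that swap replaces $e-e'$ by $e'-e$ uniformly, which does not cancel a degree-dependent sign. The clean fix is to write elements in the order $(h-h')\alpha$, as in the statement of the proposition, rather than $\alpha(h-h')$. Then the Leibniz rule gives
\[
\partial\bigl((h-h')\alpha\bigr)=(e-e')\alpha-(h-h')\partial\alpha,
\]
so the connecting homomorphism is honest multiplication by $e-e'$ with no sign. Alternatively, one may simply note that exactness and the identification of kernels and cokernels are unaffected by the sign, which is all that is used downstream.
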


More generally, given a subset $W\subseteq V$, we write $\graf_W$ for the graph obtained by exploding each of the vertices in $W$. The analogue of the exact sequence of Proposition \ref{prop:vertex explosion} is a spectral sequence interpolating between the homology of $B(\graf_W)$ and that of $B(\graf)$, which arises by filtering the reduced \'{S}wi\k{a}tkowski complex by the number of vertices of $W$ needed to write an element \cite[\S3.2]{AnDrummondColeKnudsen:ESHGBG}. 

In general, this spectral sequence is rather mysterious. Fortunately, we need but little knowledge of it. Denoting the $r$th page by $E^r_{*,*}=E^r_{*,*}(W)$, we have the following result (see the proof of \cite[Lem. 3.15]{AnDrummondColeKnudsen:ESHGBG}).

\begin{proposition}\label{prop:spectral sequence}
There is a canonical $R[E]$-linear inclusion and an isomorphism \[E^\infty_{|W|,0}\subseteq E^1_{|W|,0}\cong R[\pi_0(\graf_W)]\langle X\rangle,\] where $X$ is the set of generators of $\fullyreduced{\graf}$ of the form $\bigotimes_{w\in W}h^w$, with $h^w$ a difference of half-edges at $w$.
\end{proposition}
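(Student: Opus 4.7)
The plan is to unpack the filtration, identify its top associated graded as a tensor product of half-edge differences at $W$ with the reduced \'Swi\k{a}tkowski complex of $\graf_W$, take homology in the $q=0$ slot, and observe that no later differentials can enlarge $E^{\infty}_{|W|,0}$ beyond $E^1_{|W|,0}$.

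A basis tensor $\bigotimes_v s_v\otimes\eta$ of $\fullyreduced{\graf}$ (with $\eta$ a monomial in the edges) lies in filtration $p$ exactly when at most $p$ of the factors $s_w$ at vertices $w\in W$ are different from $\varnothing$. The top level $|W|$ is attained precisely when every $w\in W$ contributes a nontrivial factor; in the reduced complex this forces the factor at each such $w$ to be a half-edge difference, of bidegree $(1,1)$. Writing $\fullyreduced{w}^{+}\subseteq\fullyreduced{w}$ for the submodule spanned by half-edge differences, the top associated graded becomes
\[
(\mathrm{gr}_{|W|})_{*,*}\cong\bigotimes_{w\in W}\fullyreduced{w}^{+}\otimes_R\bigotimes_{v\notin W}\fullyreduced{v}\otimes_R R[E],
\]
and its $W$-factor $\bigotimes_{w\in W}\fullyreduced{w}^{+}$ has $X$ as a basis.

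The induced differential $d^{0}$ vanishes on the $W$-factors, since $\partial(h-h')=e(h)-e(h')$ loses the half-edge at $w$ and therefore drops out of the top filtration; on the remaining factors it is the usual reduced differential. Since exploding $W$ preserves the edge set while converting each $w$ into a collection of valence-one vertices, whose reduced complex contributes only $\varnothing$, the non-$W$ factor is exactly $\fullyreduced{\graf_W}$. Taking homology gives $E^1_{|W|,\,*}\cong R\langle X\rangle\otimes_R H_{*}(B(\graf_W))$. Every element of $X$ has homological degree $|W|$, so the $q=0$ slot is $R\langle X\rangle\otimes_R H_0(B(\graf_W))$. A standard count identifies $H_0(B(\graf_W))$ $R[E]$-linearly with $R[\pi_0(\graf_W)]$---path components of $B_k(\graf_W)$ correspond to distributions of $k$ particles among the components of $\graf_W$, and edge stabilization at an edge in a component $c$ increments the count in $c$---yielding the stated form of $E^1_{|W|,0}$.

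The inclusion $E^{\infty}_{|W|,0}\subseteq E^1_{|W|,0}$ is then automatic, since $|W|$ is the top nonzero filtration level: no differential $d_r$ for $r\geq 1$ can map into the column $p=|W|$, so $E^{r+1}_{|W|,0}$ embeds into $E^{r}_{|W|,0}$ as the kernel of the outgoing differential, and one iterates. The step requiring the most care is the $R[E]$-linear identification of $H_0(B(\graf_W))$ with $R[\pi_0(\graf_W)]$; everything else is essentially bookkeeping for a filtration whose shape is forced by the bidegrees of reduced generators.
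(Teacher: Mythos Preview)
Your argument is correct. The paper itself does not give a self-contained proof of this proposition; it simply refers to the proof of \cite[Lem.~3.15]{AnDrummondColeKnudsen:ESHGBG}, and your write-up is exactly the kind of unpacking of that reference one would expect: identify the top associated graded of the $W$-filtration as $R\langle X\rangle\otimes_R\fullyreduced{\graf_W}$, compute $E^1_{|W|,0}$ via $H_0(B(\graf_W))\cong R[\pi_0(\graf_W)]$, and note that the top column receives no incoming differentials, so $E^\infty_{|W|,0}$ is a submodule of $E^1_{|W|,0}$.
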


Thus, in degree $|W|$, the top graded piece of the filtration on homology associated to $W$ is a submodule of an $R[E]$-module that is free over a specific quotient ring tied to the combinatorics of $W$ in $\graf$.

\subsection{Loops, stars, and relations}\label{section:loops stars relations} In this section, we explore two basic sources of nontrivial elements of $H_*(B(\graf))$ and the relations between them. The reader is directed to \cite[\S5.1]{AnDrummondColeKnudsen:SSGBG} for further details and proofs.

\begin{example}\label{example:loop class}
Since $\graf=B_1(\graf)$ is a subspace of $B(\graf)$, an oriented cycle in $\graf$ determines an element of $H_*(B(\graf))$, called a \emph{loop class}. 
\end{example}

A standard chain level representative of a loop class is obtained by summing the differences of half-edges involved in the cycle in question. For example, the standard representative of the unique loop class in the graph $\lollipopgraph{}$ depicted in Figure \ref{fig:lollipop}, oriented clockwise, is $h-h'\in \fullyreduced{\lollipopgraph{}}$.

\begin{figure}[ht]
\begin{tikzpicture}[scale=.75]
\fill[black] (0,-1) circle (10/3pt);
\fill[black] (0,-2.5) circle (10/3pt);
\draw(0,0) circle (1cm);
\draw(0,-2.5) -- (0,-1);
\draw (.35,-.65) node{$h'$}; 
\draw (-.27,-.65) node{$h$}; 
\draw (0,1.25) node{$e'$}; 
\draw (0.2,-1.75) node{$e$}; 
\end{tikzpicture}
\caption{The lollipop graph $\lollipopgraph{}$}
\label{fig:lollipop}
\end{figure}

\begin{example}\label{example:star class}
There is a canonical homotopy equivalence $S_1\simeq B_2(\stargraph{3})$, where $\stargraph{3}$ is the cone on three points---see Figure \ref{fig:star class} for a depiction of the cycle witnessing this homotopy equivalence. By functoriality, then, the choice of half-edges $h_1$, $h_2$, and $h_3$ sharing a common vertex determines a \emph{star class} in $H_1(B_2(\graf))$, which depends on the ordering only up to sign.
We will denote star classes by $\alpha$ or, e.g., $\alpha_{123}$ if we wish to emphasize the particular choice of half-edges.
\end{example}

Writing $e_j$ for the edge associated to $h_j$, a standard chain level representative for a star class is given by the sum \[a=e_3(h_1-h_2)+e_2(h_3-h_1)+e_1(h_2-h_3).\] This expression is symmetric and exhibits well the geometry of the class (see Figure \ref{fig:star class}), but the alternative expression \[a=(e_1-e_3)(h_2-h_1)-(e_1-e_2)(h_3-h_1)\] in the basis for $\fullyreduced{\graf}$ privileging $h_1$ is also useful.

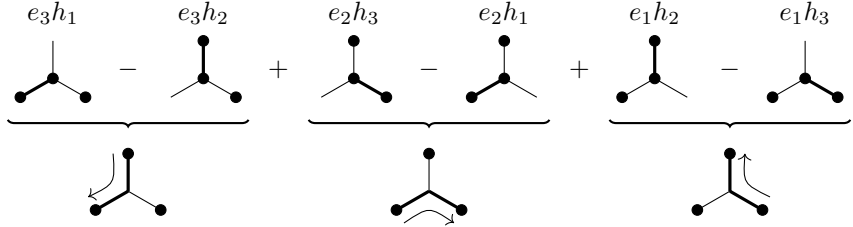
\begin{figure}[ht]
\begin{center}
\begin{tikzpicture}
\draw (1,.125) node{$-$}; 
\draw (3,.125) node{$+$};
\draw (5,.125) node{$-$};
\draw (7,.125) node{$+$};
\draw (9,.125) node{$-$};
\draw [
    thick,
    decoration={
        brace,
        mirror,
        raise=0.05cm
    },
    decorate
] (-.6,-.5) -- (2.6,-.5);
\draw [
    thick,
    decoration={
        brace,
        mirror,
        raise=0.05cm
    },
    decorate
] (3.4,-.5) -- (6.6,-.5);
\draw [
    thick,
    decoration={
        brace,
        mirror,
        raise=0.05cm
    },
    decorate
] (7.4,-.5) -- (10.6,-.5);
\begin{scope}[xshift=4cm]
\draw(0,.6) node [above] {$e_2h_3$};
\draw[black, fill=black] (0,0) circle (1.5pt);
 \draw(0,0) -- (0,.5);
 \draw(0,0) --(.433,-.25);
 \draw(0,0) --(-.433,-.25);
\draw [fill] (0,0) circle [radius = 2pt];
\draw [fill] (.433,-.25) circle [radius = 2pt];
\draw [black, very thick] (0,0)-- (.433,-.25);
\draw [fill] (0,.5) circle [radius = 2pt];
\end{scope}
\begin{scope}[xshift=0cm]
\draw(0,.6) node [above] {$e_3h_1$};
\draw[black, fill=black] (0,0) circle (1.5pt);
 \draw(0,0) -- (0,.5);
 \draw(0,0) --(.433,-.25);
 \draw(0,0) --(-.433,-.25);
\draw [fill] (0,0) circle [radius = 2pt];
\draw [fill] (.433,-.25) circle [radius = 2pt];
\draw [black, very thick] (0,0)-- (-.433,-.25);
\draw [fill] (-.433,-.25) circle [radius = 2pt];
\end{scope}
\begin{scope}[xshift=8cm]
\draw(0,.6) node [above] {$e_1h_2$};
 \draw[black, fill=black] (0,0) circle (1.5pt);
 \draw(0,0) -- (0,.5);
 \draw(0,0) --(.433,-.25);
 \draw(0,0) --(-.433,-.25);
\draw [fill] (0,0) circle [radius = 2pt];
\draw [fill] (-.433,-.25) circle [radius = 2pt];
\draw [black, very thick] (0,0)-- (0,.5);
\draw [fill] (0,.5) circle [radius = 2pt];
\end{scope}
\begin{scope}[xshift=10cm]
\draw(0,.6) node [above] {$e_1h_3$};
\draw[black, fill=black] (0,0) circle (1.5pt);
 \draw(0,0) -- (0,.5);
 \draw(0,0) --(.433,-.25);
 \draw(0,0) --(-.433,-.25);
\draw [fill] (0,0) circle [radius = 2pt];
\draw [fill] (.433,-.25) circle [radius = 2pt];
\draw [black, very thick] (0,0)-- (.433,-.25);
\draw [fill] (-.433,-.25) circle [radius = 2pt];
\end{scope}
\begin{scope}[xshift=2cm]
\draw(0,.6) node [above] {$e_3h_2$};
 \draw[black, fill=black] (0,0) circle (1.5pt);
 \draw(0,0) -- (0,.5);
 \draw(0,0) --(.433,-.25);
 \draw(0,0) --(-.433,-.25);
\draw [fill] (0,0) circle [radius = 2pt];
\draw [fill] (.433,-.25) circle [radius = 2pt];
\draw [black, very thick] (0,0)-- (0,.5);
\draw [fill] (0,.5) circle [radius = 2pt];
\end{scope}
\begin{scope}[xshift=6cm]
\draw(0,.6) node [above] {$e_2h_1$};
\draw[black, fill=black] (0,0) circle (1.5pt);
 \draw(0,0) -- (0,.5);
 \draw(0,0) --(.433,-.25);
 \draw(0,0) --(-.433,-.25);
\draw [fill] (0,0) circle [radius = 2pt];
\draw [fill] (0,.5) circle [radius = 2pt];
\draw [black, very thick] (0,0)-- (-.433,-.25);
\draw [fill] (-.433,-.25) circle [radius = 2pt];
\end{scope}
\begin{scope}[yshift=-1.5cm, xshift=9cm]
\draw(0,0) -- (0,.5);
\draw(0,0) --(.433,-.25);
\draw(0,0) --(-.433,-.25);
\draw[->] (.533,-.0768) .. controls (.1732,.1) .. (.2,.5);
\draw [fill] (0,.5) circle [radius = 2pt];
\draw [fill] (-.433,-.25) circle [radius = 2pt];
\draw [fill] (.433,-.25) circle [radius = 2pt];
\draw [black, very thick] (0,0)-- (.433,-.25);
\draw [black, very thick] (0,0)-- (0,.5);
\end{scope}
\begin{scope}[yshift=-1.5cm, xshift=5cm]
\draw(0,0) -- (0,.5);
\draw(0,0) --(.433,-.25);
\draw(0,0) --(-.433,-.25);
\draw[->](-.333,-.4232) .. controls (0,-.2) .. (.333,-.4232);
\draw [fill] (0,.5) circle [radius = 2pt];
\draw [fill] (-.433,-.25) circle [radius = 2pt];
\draw [fill] (.433,-.25) circle [radius = 2pt];
\draw [black, very thick] (0,0)-- (-.433,-.25);
\draw [black, very thick] (0,0)-- (.433,-.25);
\end{scope}
\begin{scope}[yshift=-1.5cm, xshift=1cm]
\draw(0,0) -- (0,.5);
\draw(0,0) --(.433,-.25);
\draw(0,0) --(-.433,-.25);
\draw[->](-.2,.5) .. controls (-.1732,.1) .. (-.533,-.0768);
\draw [fill] (0,.5) circle [radius = 2pt];
\draw [fill] (-.433,-.25) circle [radius = 2pt];
\draw [fill] (.433,-.25) circle [radius = 2pt];
\draw [black, very thick] (0,0)-- (-.433,-.25);
\draw [black, very thick] (0,0)-- (0,.5);
\end{scope}
\end{tikzpicture}
\end{center}
\caption{The basic star class and its standard representative}
\label{fig:star class}
\end{figure}

Star classes and loop classes interact according to a relation called the \emph{Q-relation}. Our notation will refer to the graph $\lollipopgraph{}$ of Figure \ref{fig:lollipop}, but functoriality propagates the relation to any graph with a subgraph isomorphic to a subdivision of $\lollipopgraph{}$. Writing $\gamma$ for the clockwise oriented loop class and $\alpha$ for the counterclockwise oriented star class in $\lollipopgraph{}$, we have the following.

\begin{lemma}[Q-relation]\label{lem:Q}
In the homology of the configuration spaces of the graph $\lollipopgraph{}$, there is the relation
$(e-e')\gamma=\alpha$.
\end{lemma}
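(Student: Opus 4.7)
The plan is to verify the identity directly at the chain level in $\fullyreduced{\lollipopgraph{}}$. The unique essential vertex of $\lollipopgraph{}$ carries three incident half-edges: $h$ and $h'$ on the loop edge $e'$, together with a third half-edge, which I will denote $\tilde h$, on the stick edge $e$. The standard chain representative for the clockwise loop class $\gamma$ is the cycle $h-h'$, while the symmetric chain representative from Example \ref{example:star class} for the counterclockwise star class $\alpha$, using the cyclic ordering $(h, h', \tilde h)$ of half-edges at the essential vertex, is
\[
a \;=\; e(h-h') + e'(\tilde h - h) + e'(h' - \tilde h).
\]

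The entire content of the lemma is then the algebraic coincidence that two of the three half-edges at the essential vertex carry the same edge label $e'$, so the $\tilde h$ contributions in the last two summands cancel:
\[
a \;=\; e(h-h') - e'(h-h') \;=\; (e-e')(h-h').
\]
Because the $R[E]$-action on the \'{S}wi\k{a}tkowski complex that realizes edge stabilization is literally multiplication by elements of $R[E]$, the expression on the right is precisely the standard representative of $(e-e')\gamma$. The identity therefore holds already on the nose in $\fullyreduced{\lollipopgraph{}}$, and in particular in homology.

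The only nontrivial bookkeeping is to pin down signs. One must confirm that the cyclic ordering $(h, h', \tilde h)$ actually produces the counterclockwise star class $\alpha$, and that $h-h'$ represents the clockwise loop class $\gamma$; both can be read off from Figures \ref{fig:lollipop} and \ref{fig:star class}. Since reversing either of these two conventions flips a sign on both sides of the equation, no sign obstruction can arise. Beyond this bookkeeping, no further machinery is required; in particular, no induction, no spectral sequence, and no appeal to the geometric cycle of Figure \ref{fig:star class} is needed, although geometrically the relation reflects the fact that pushing a particle off the lollipop's stick onto the essential vertex converts a loop motion into the basic star motion.
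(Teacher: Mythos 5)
Your proof is correct. Note that the paper itself does not supply a proof of Lemma~\ref{lem:Q}, instead deferring to \cite[\S5.1]{AnDrummondColeKnudsen:SSGBG}; your direct chain-level verification in $\fullyreduced{\lollipopgraph{}}$ is the natural argument and matches the alternative expression $a=(e_1-e_3)(h_2-h_1)-(e_1-e_2)(h_3-h_1)$ that the paper records for the star representative, which with $e_1=e_2=e'$ and $e_3=e$ collapses immediately to $(e-e')(h-h')$. Your parenthetical about sign conventions is stated a little loosely---flipping the orientation convention for $\gamma$ alone changes only the left-hand side, and flipping the convention for $\alpha$ alone changes only the right-hand side---but the point that the relation holds on the nose for the paper's stated orientations (clockwise $\gamma$, counterclockwise $\alpha$) is what matters, and your exact chain identity establishes it.
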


The Q-relation implies that, modulo star classes, a loop class is annihilated by the appropriate difference of edges. Consequently, loop classes generate $R[E]$-submodules exhibiting strictly slower growth (again modulo star classes), hence contributing nothing asymptotically. Thus, despite its simplicity, the Q-relation already carries the germ of Theorem \ref{thm:growth}.

\begin{remark}
Roughly speaking, Theorem \ref{thm:tame} below asserts that every generator in $H_*(B(\graf))$ satisfies a relation analogous to the Q-relation forcing slow growth modulo tori, and the difficulty of the argument lies in having to establish this fact without having access to the generators or relations themselves. As discussed in Section \ref{section:universal presentations}, it would be extremely interesting to have such access.
\end{remark}

Different star classes at a vertex of valence at least $4$ are also related to one another. The first relation is essentially obvious.

\begin{lemma}\label{lem:unstable X}
In the homology of the configuration spaces of a graph containing a vertex with distinct half-edges $h_1$, $h_2$, $h_3$, and $h_4$, there is the relation of star classes
$\alpha_{123}-\alpha_{124}+\alpha_{134}-\alpha_{234}=0$.
\end{lemma}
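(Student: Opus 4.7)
The plan is to verify the relation on the nose at the chain level in the reduced \'{S}wi\k{a}tkowski complex $\fullyreduced{\graf}$, from which passage to $H_1(B_2(\graf))$ is automatic. Recall from Example \ref{example:star class} the standard cycle representative
\[ a_{ijk} = e_k(h_i - h_j) + e_j(h_k - h_i) + e_i(h_j - h_k) \]
for $\alpha_{ijk}$, which is available because all four half-edges $h_1,\dots,h_4$ share a common vertex.

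The claim to be checked is that $a_{123} - a_{124} + a_{134} - a_{234} = 0$ identically in $\fullyreduced{\graf}$, not merely up to a boundary. To verify this, I would collect the coefficient of each edge generator $e_m$ for $m \in \{1,2,3,4\}$. Since $e_m$ appears in $a_{ijk}$ only when $m \in \{i,j,k\}$, the coefficient of $e_m$ receives contributions from exactly three of the four summands, namely those indexed by the three-element subsets of $\{1,2,3,4\}$ containing $m$. In each such summand the coefficient of $e_m$ is a difference of two half-edges drawn from $\{h_1,h_2,h_3,h_4\} \setminus \{h_m\}$, and a brief bookkeeping check shows that the three differences, weighted by the signs imposed by the alternating sum, telescope to zero.

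Since the alternating sum vanishes at the chain level, it vanishes a fortiori in homology, proving the relation. The only work is the purely symbolic expansion, which is why the authors describe the lemma as essentially obvious; there is no real obstacle beyond the bookkeeping.
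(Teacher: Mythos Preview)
Your proposal is correct and matches the paper's intent: the authors describe this relation as ``essentially obvious'' and defer to \cite[\S5.1]{AnDrummondColeKnudsen:SSGBG} without writing out a proof, and the direct chain-level cancellation you outline is exactly the verification they have in mind. The alternating sum of the standard representatives $a_{ijk}$ does indeed vanish identically in $\fullyreduced{\graf}$, as your edge-by-edge check confirms.
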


The second relation, called the \emph{X-relation}, has almost the same form. 

\begin{lemma}[$X$-relation]\label{lem:stable X}
In the homology of the configuration spaces of a graph containing a vertex with distinct half-edges $h_1$, $h_2$, $h_3$, and $h_4$, we have the relation of stabilized star classes
$e_4\alpha_{123}-e_3\alpha_{124}+e_2\alpha_{134}-e_1\alpha_{234}=0$.
\end{lemma}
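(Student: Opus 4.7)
The plan is a direct chain-level verification in the reduced \'{S}wi\k{a}tkowski complex. Let $v$ denote the common vertex of $h_1,h_2,h_3,h_4$, and write $e_j$ for the edge associated to $h_j$. The relation holds already in $\fullyreduced{\graf}$, hence a fortiori in homology.

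First I would record the symmetric cycle representative
\[
a_{ijk}=e_k(h_i-h_j)+e_j(h_k-h_i)+e_i(h_j-h_k)
\]
of $\alpha_{ijk}$ given in Example \ref{example:star class}, and form the chain
\[
\eta := e_4\, a_{123}-e_3\, a_{124}+e_2\, a_{134}-e_1\, a_{234}\ \in\ \fullyreduced{\graf}.
\]
Next I would expand $\eta$ and collect, for each $\ell\in\{1,2,3,4\}$, the coefficient of the generator $h_\ell$. This coefficient is a signed $\mathbb{Z}$-linear combination of length-two monomials in $\{e_j : j\neq\ell\}$, and an elementary bookkeeping check shows that each such monomial $e_ae_b$ appears in the expansion exactly twice, with opposite signs. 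Because $R[E]$ is commutative, these terms cancel in pairs, so $\eta=0$ already at the chain level, yielding the asserted relation of stabilized star classes in $H_1(B_3(\graf))$.

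The only potential subtlety is the Koszul sign coming from the bigrading on $\fullyreduced{\graf}$; however, since each edge generator $e_j$ has homological degree zero, the $e_j$ commute past one another and past the half-edge generators without sign, so the cancellation is genuinely termwise. Consequently the main ``difficulty'' is purely organizational, and—in contrast with the $Q$-relation, whose proof requires exhibiting an explicit null-homotopy—no bounding chain is needed. I would phrase the bookkeeping as a single display tabulating the six contributions to each $h_\ell$-coefficient, letting commutativity of $R[E]$ finish the argument.
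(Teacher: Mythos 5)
Your proof is correct, and the relation indeed holds already at the chain level. Expanding the standard symmetric representatives $a_{ijk}=e_k(h_i-h_j)+e_j(h_k-h_i)+e_i(h_j-h_k)$, the combination $\eta=e_4a_{123}-e_3a_{124}+e_2a_{134}-e_1a_{234}$ vanishes identically: for instance the $h_1$-coefficient is
\[
e_3e_4-e_2e_4-e_3e_4+e_2e_3+e_2e_4-e_2e_3=0,
\]
and the $h_2$, $h_3$, $h_4$ coefficients cancel analogously, each being a sum of three pairs of opposite quadratic monomials in the $e_j$. Your sign remark is also right: the $e_j$ sit in homological degree $0$, so they commute strictly with one another and with the half-edge generators, and no Koszul sign intervenes.

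Note that the paper gives no proof of this lemma; Section 2.3 defers proofs of the Q-, $\thetagraph{}$-, and X-relations to \cite[\S5.1]{AnDrummondColeKnudsen:SSGBG}. So there is nothing in the present text to compare against; your direct chain-level verification is self-contained and in fact establishes the stronger statement that $\eta=0$ as a cycle, not merely in homology, so that no bounding chain is needed --- exactly the contrast with the Q-relation that you flag. One cosmetic point: the individual half-edge generators $h_\ell$ do not lie in $\fullyreduced{v}$, only their differences do; but each $a_{ijk}$ is a sum of such differences and so lies in $\fullyreduced{\graf}$, and the bookkeeping can be carried out equivalently in the unreduced complex $\intrinsic{\graf}$ or in the basis of $\fullyreduced{\graf}$ privileging $h_1$, with the same conclusion.
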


These relations combine to give the following simple but important relation. It is this relation that is responsible for the factor of $d(w)-2$ in the statement of Theorem \ref{thm:growth} (see the proof of Lemma \ref{lem:quotient by good tori}).

\begin{corollary}\label{cor:combined X}
In the homology of the configuration spaces of a graph containing a vertex with distinct half-edges $h_1$, $h_2$, $h_3$, and $h_4$, there is the relation of stabilized star classes
$(e_4-e_1)\alpha_{123}-(e_3-e_1)\alpha_{124}+(e_2-e_1)\alpha_{134}=0$.
\end{corollary}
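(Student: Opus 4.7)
The plan is to derive Corollary \ref{cor:combined X} as an immediate linear combination of the two preceding relations, Lemma \ref{lem:unstable X} and Lemma \ref{lem:stable X}, both of which are already available in the homology of the configuration spaces in question. Observe that the unstable $X$-relation has the form of a vanishing alternating sum of the four star classes $\alpha_{123}, \alpha_{124}, \alpha_{134}, \alpha_{234}$, while the stabilized $X$-relation has the same combinatorial shape but with each $\alpha_{jk\ell}$ weighted by the missing edge variable $e_m$ (where $\{j,k,\ell,m\}=\{1,2,3,4\}$). The key observation is that both relations contain the term $\alpha_{234}$, with coefficient $-1$ and $-e_1$ respectively, so a suitable scalar combination will cancel $\alpha_{234}$ entirely.

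Concretely, the plan is to multiply the relation of Lemma \ref{lem:unstable X} by the edge element $e_1 \in R[E]$, which, since the map $R[E]\to H_*(B(\graf))$ by edge stabilization is $R[E]$-linear, is a valid operation on the homology level. This yields
\[e_1\alpha_{123}-e_1\alpha_{124}+e_1\alpha_{134}-e_1\alpha_{234}=0.\]
Subtracting this from Lemma \ref{lem:stable X} then kills the $\alpha_{234}$ term and gives
\[(e_4-e_1)\alpha_{123}-(e_3-e_1)\alpha_{124}+(e_2-e_1)\alpha_{134}=0,\]
which is exactly the claimed identity.

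There is essentially no obstacle here; the only point requiring a moment's care is confirming that one may legitimately scale a homology relation by an element of $R[E]$, which is provided by the $R[E]$-module structure on $H_*(B(\graf))$ coming from edge stabilization as recalled in Section \ref{section:swiatkowski}. Beyond that, the corollary is a purely formal consequence of the two preceding lemmas.
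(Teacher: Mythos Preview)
Your proposal is correct and matches the paper's approach exactly: the paper does not even spell out a proof, simply remarking that the two preceding relations ``combine'' to give Corollary~\ref{cor:combined X}, and your computation---multiplying Lemma~\ref{lem:unstable X} by $e_1$ and subtracting from Lemma~\ref{lem:stable X}---is precisely that combination.
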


In situations with a certain amount of degeneracy, star classes at distinct vertices can also be related. Referring to the graph $\thetagraph{3}$ pictured in Figure \ref{fig:theta}, and writing $\alpha$ and $\alpha'$ for the clockwise oriented star classes at the top vertex and bottom vertices, respectively, we have the following relation.

\begin{figure}[ht]
\begin{tikzpicture}[scale=1.5]
\fill[black] (0,-.5) circle (5/3pt);
\fill[black] (0,.5) circle (5/3pt);
\draw(0,0) circle (.5cm);
\draw(0,.5) -- (0,-.5);
\end{tikzpicture}
\caption{The theta graph $\thetagraph{3}$}
\label{fig:theta}
\end{figure}
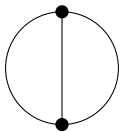

\begin{lemma}[$\thetagraph{}$-relation]\label{lem:theta}
In the homology of the configuration spaces of the graph $\thetagraph{3}$, there is the relation of star classes
$\alpha-\alpha'=0$.
\end{lemma}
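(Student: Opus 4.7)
The plan is a direct chain-level argument in the reduced \'{S}wi\k{a}tkowski complex $\fullyreduced{\thetagraph{3}}$: I would exhibit an explicit bidegree-$(2,2)$ chain $\beta$ whose boundary equals $\alpha - \alpha'$, mirroring the one-dimensional calculation underlying the Q-relation.

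First I would set up notation. Fix the planar embedding of Figure \ref{fig:theta}; let $v$ and $w$ denote the top and bottom vertices and label the three edges $e_1, e_2, e_3$ so that the half-edges $(h_1, h_2, h_3)$ at $v$ (with $h_i$ on $e_i$) appear in clockwise cyclic order, and write $h_i'$ for the other half-edge of $e_i$. A key combinatorial observation is that, in any planar embedding of $\thetagraph{3}$, the clockwise cyclic order at $w$ is the \emph{reverse} of that at $v$. Using the alternative basis formula for a star class with pivots $h_1$ and $h_1'$, this yields
\begin{align*}
\alpha &= (e_1-e_3)(h_2-h_1) - (e_1-e_2)(h_3-h_1), \\
\alpha' &= -\bigl[(e_1-e_3)(h_2'-h_1') - (e_1-e_2)(h_3'-h_1')\bigr].
\end{align*}

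Next I would take
\[\beta \;=\; (h_2-h_1)(h_3'-h_1') \;-\; (h_3-h_1)(h_2'-h_1') \;\in\; \fullyreduced{\thetagraph{3}}\]
and compute $\partial\beta$ using the Leibniz rule with the Koszul sign (half-edge generators have odd homological degree) together with $\partial(h_i - h_j) = e_i - e_j$. Grouping the four resulting terms by the vertex where the half-edge difference lives and simplifying, the $v$-terms reassemble into $\alpha$ while the $w$-terms reassemble into $-\alpha'$ (thanks to the orientation-reversal noted above), so $\partial\beta = \alpha - \alpha'$. The only delicate point is the sign bookkeeping between the two vertices; once that is pinned down, the computation is routine, and I do not foresee any substantial obstacle.
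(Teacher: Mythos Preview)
Your proposal is correct: the chain $\beta=(h_2-h_1)(h_3'-h_1')-(h_3-h_1)(h_2'-h_1')$ does satisfy $\partial\beta=\alpha-\alpha'$, and your orientation-reversal observation at the second vertex pins down the sign on $\alpha'$ correctly. However, your route differs from the paper's. The paper does not construct a bounding $2$-chain; instead it records that the $\thetagraph{}$-relation is an easy consequence of the Q-relation (Lemma~\ref{lem:Q}), the idea being that a loop class in $\thetagraph{3}$ participates in a Q-relation at \emph{each} of the two trivalent vertices, so the same edge-stabilized loop class is expressed both as $\alpha$ and as $\alpha'$. Your argument is more elementary and self-contained, needing no prior relation; the paper's is more structural, situating the $\thetagraph{}$-relation within the hierarchy of relations in \S\ref{section:loops stars relations} as derived from the Q-relation rather than primitive. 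Both are short, and neither has a decisive technical advantage here.
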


This relation, which is an easy consequence of the Q-relation, gives rise to two phenomena of interest: first, the dichotomy of rigidity and non-rigidity and its implications for rates of growth (Proposition \ref{prop:tori facts}); second, $2$-torsion in the homology of non-planar graph braid groups. As discussed in Section \ref{section:torsion}, Theorem \ref{thm:growth} implies a more general causal relationship between growth and torsion, the precise nature of which remains mysterious.

\subsection{Tori}\label{section:tori} Since $B(\graf_1\sqcup \graf_2)\cong B(\graf_1)\times B(\graf_2)$, one can form the \emph{external product} of classes supported in disjoint subgraphs of a larger graph \cite[Def. 5.10]{AnDrummondColeKnudsen:SSGBG}. 

\begin{definition}
Let $W$ be a set of essential vertices. A class $\alpha\in H_{|W|}(B_{2|W|}(\graf))$ is called a \emph{$W$-torus} if it is the external product of star classes at each $w\in W$.
\end{definition}

These classes have the following fundamental property.

\begin{observation}\label{obs:torus quotient} If $e_1,e_2\in E$ can be connected by a path of edges avoiding $W$, then $(e_1-e_2)\alpha=0$ for any $W$-torus $\alpha$. 
\end{observation}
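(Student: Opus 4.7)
The plan is to exhibit an explicit bounding chain in the \'{S}wi\k{a}tkowski complex. I first reduce to the special case where $e_1$ and $e_2$ share a common vertex. Writing the given path as a sequence of edges $e_1 = f_0, f_1, \ldots, f_n = e_2$, consecutive edges $f_i$ and $f_{i+1}$ meet at some vertex $v_i$, and by hypothesis $v_i \notin W$ for every $i$. Telescoping
\[
(e_1 - e_2)\alpha = \sum_{i=0}^{n-1}(f_i - f_{i+1})\alpha
\]
reduces the problem to showing that $(e - e')\alpha = 0$ in homology whenever $e$ and $e'$ share a common vertex $v \notin W$.

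For this reduced problem, the key observation is that the formula $\partial(h) = e(h) - v(h)$ defining the \'{S}wi\k{a}tkowski differential already makes the difference $e - e'$ a boundary at the level of the single factor $\fullyreduced{v}$: writing $h, h'$ for the half-edges of $e, e'$ at $v$, we have $\partial(h - h') = (e - v) - (e' - v) = e - e'$. To upgrade this to a homological statement about $\alpha$, I will use that any $W$-torus admits a cycle representative $a$ obtained as an external product of standard chain-level representatives of star classes at the vertices of $W$; because those representatives involve only edges and half-edges incident to $W$, the $v$-factor of $a$ is $\varnothing$. Consequently $(h - h') \cdot a$ is a well-defined chain in the \'{S}wi\k{a}tkowski complex, and the graded Leibniz rule combined with $\partial a = 0$ will yield
\[
\partial\bigl((h - h') \cdot a\bigr) = (e - e') \cdot a,
\]
so $(e - e')\alpha$ is a boundary and vanishes in $H_*(B(\graf))$.

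Given the infrastructure already in place, I do not expect any step here to constitute a substantive obstacle, consistent with the statement being labeled an observation. The one point requiring verification is that one can choose the chain-level representative of $\alpha$ to have $\varnothing$ at $v$, but this follows immediately from the disjoint-support structure of the external product of star classes at the vertices of $W$, since $v$ is distinct from every such vertex.
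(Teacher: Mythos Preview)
Your proof is correct. The paper states this result as an Observation without supplying a proof, and your argument---telescoping along the path to reduce to edges sharing a vertex $v\notin W$, then exhibiting $(h-h')\cdot a$ as an explicit primitive for $(e-e')\cdot a$ in the \'{S}wi\k{a}tkowski complex using that the $v$-factor of the standard representative $a$ is $\varnothing$---is precisely the routine verification that justifies this label.
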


Therefore, the action of $R[E]$ on $\alpha$ factors through the quotient shown in the commuting diagram 
\[\begin{tikzcd}
R[E]\langle\alpha\rangle\ar{d}\ar{r}&R[E]\cdot\alpha\subseteq H_{|W|}(B(\graf))\\
R[\pi_0(\graf_W)]\langle \alpha\rangle\ar[dashed]{ur}&
\end{tikzcd}
\] It turns out that $W$-tori for which this map is an isomorphism can be partially characterized. 

\begin{definition}[{\cite[Def. 3.10]{AnDrummondColeKnudsen:ESHGBG}}]
The set $W$ of essential vertices is \emph{well-separating} if the open star of each $w\in W$ intersects more than one component of $\graf_W$.
\end{definition}

Recall that $E^r_{*,*}$ denotes the $r$th page of the spectral sequence arising from the filtration induced by $W$ (see Section \ref{section:vertex explosion}).

\begin{proposition}[{\cite[Lem. 3.15]{AnDrummondColeKnudsen:ESHGBG}}]\label{prop:tori facts}
Let $W$ be a well-separating set of essential vertices and $\alpha$ a $W$-torus. The following conditions are equivalent.
\begin{enumerate}
\item The natural map $R[\pi_0(\graf_W)]\langle\alpha\rangle\to R[E]\cdot\alpha$ is an isomorphism.
\item The image of $\alpha$ in $E^\infty_{|W|,0}$ is nonzero.
\item Any cycle in $\fullyreduced{\graf}$ representing a star factor of $\alpha$ involves half-edges in multiple components of $\graf_W$.
\end{enumerate}
\end{proposition}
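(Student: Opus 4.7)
The plan is to prove $(2)\Leftrightarrow(3)$ and $(2)\Leftrightarrow(1)$. The key structural observation is that edge stabilization preserves the filtration of $\fullyreduced{\graf}$ by the number of half-edge differences at $W$-vertices, so the spectral sequence of Section \ref{section:vertex explosion} is $R[E]$-equivariant, the isomorphism of Proposition \ref{prop:spectral sequence} is $R[E]$-linear, and the $R[E]$-action on $E^1_{|W|,0}\cong R[\pi_0(\graf_W)]\langle X\rangle$ factors through the surjection $R[E]\twoheadrightarrow R[\pi_0(\graf_W)]$ identifying edges in a common component of $\graf_W$; the target is a free $R[\pi_0(\graf_W)]$-module.

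For $(2)\Leftrightarrow(3)$: a direct computation of the image of the standard external product representative in $E^1_{|W|,0}$ yields the tensor product of star-factor reductions of the form $(c_2-c_1)(h_3-h_1)-(c_3-c_1)(h_2-h_1)$ at each $w\in W$, where $c_i\in\pi_0(\graf_W)$ is the component of the edge $e_i$ at $w$. Since $\partial$ either strictly lowers filtration (when applied to a half-edge difference at a $W$-vertex) or realizes the $d^0$-differential (otherwise), this image is independent of the chosen chain representative and coincides with the image in $E^\infty_{|W|,0}$ under the canonical inclusion $E^\infty_{|W|,0}\hookrightarrow E^1_{|W|,0}$. The factor at $w$ is nonzero iff $c_1,c_2,c_3$ are not all equal in $\pi_0(\graf_W)$; a telescoping argument in the degenerate case---expressing $e_i-e_j$ as the boundary of a chain of half-edge differences at intermediate non-$W$-vertices along a path in the common component---shows that $c_1=c_2=c_3$ holds iff the star factor at $w$ admits a cycle representative with support in a single component of $\graf_W$, which is the negation of (3) at $w$.

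For $(2)\Rightarrow(1)$: surjectivity is tautological, and injectivity follows by composing with the inclusion into $R[\pi_0(\graf_W)]\langle X\rangle$, yielding a map of cyclic $R[\pi_0(\graf_W)]$-modules sending $1$ to the nonzero image of $\alpha$ in a free module.

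For $(1)\Rightarrow(2)$ (contrapositive): if $\alpha$ has vanishing image in $E^\infty_{|W|,0}$, then $R[E]\cdot\alpha$ is contained in the filtration piece $F^{|W|-1}H_{|W|}$, since stabilization preserves filtration. The well-separating hypothesis yields $|\pi_0(\graf_W)|>|\pi_0(\graf_{W'})|$ for every proper $W'\subsetneq W$: un-exploding any $w\in W\setminus W'$ merges the at-least-two $\graf_W$-components meeting the open star of $w$, and further un-explosions only merge more components. A growth-rate comparison, generalizing Proposition \ref{prop:spectral sequence} to bound the polynomial growth of $F^{|W|-1}H_{|W|}$ in particle number $k$ by a polynomial of degree strictly less than $|\pi_0(\graf_W)|-1$, then contradicts the injectivity of the map in (1), since $R[\pi_0(\graf_W)]\langle\alpha\rangle$ has polynomial growth of degree exactly $|\pi_0(\graf_W)|-1$. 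The main obstacle is this growth bound, which requires extending the description of the spectral sequence beyond the top corner of Proposition \ref{prop:spectral sequence}.
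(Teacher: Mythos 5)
The proposition is cited from a companion paper rather than proved here, so I am evaluating your attempt against the strategy the present paper signals (in particular, the $\thetagraph{}$-relation argument inside the proof of Lemma~\ref{lem:non-rigid tame}).

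Your $(2)\Rightarrow(1)$ argument is correct: composing with the inclusion into the free $R[\pi_0(\graf_W)]$-module of Proposition~\ref{prop:spectral sequence}, and using that $R[\pi_0(\graf_W)]$ is an integral domain, gives injectivity. Your computation of the image in $E^1_{|W|,0}$ is also the right starting point for $(2)\Leftrightarrow(3)$.

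There is, however, a genuine gap in $(1)\Rightarrow(2)$, and it is not merely a deferred technicality. You propose to bound the polynomial growth of the entire filtration piece $F^{|W|-1}H_{|W|}$ by degree strictly less than $|\pi_0(\graf_W)|-1$. This bound is \emph{false} in general. Take $\graf$ a tree with two adjacent essential vertices $w_1$ of valence $3$ and $w_2$ of valence $4$, and let $W=\{w_1\}$. Then $|\pi_0(\graf_W)|=3$, so your bound would force $F^0H_1$ to grow in degree $\leq 1$. But $F^0H_1=\im\bigl(H_1(B(\graf_W))\to H_1(B(\graf))\bigr)$ contains the rigid star classes at $w_2$, whose $R[E]$-span has growth of degree $|\pi_0(\graf_{w_2})|-1 = 3$. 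The lower filtration pieces $E^1_{p,q}$ with $q>0$ involve $H_q(B(\graf_{W'}))$ for proper $W'\subsetneq W$, whose Ramos numbers can exceed $|\pi_0(\graf_W)|$, so the rank of $F^{|W|-1}H_{|W|}$ is not controlled by $|\pi_0(\graf_W)|$ alone. The quantity that \emph{can} be bounded is the cyclic module $R[E]\cdot\alpha$ itself, and the mechanism is the $\thetagraph{}$-relation: the failure of (3) at some $w_0\in W$ produces a surjection $R[\pi_0(\graf_{W\setminus\{w_0\}})]\twoheadrightarrow R[E]\cdot\alpha$ exactly as in the proof of Lemma~\ref{lem:non-rigid tame}, and since $W$ is well-separating the source has strictly smaller rank than $R[\pi_0(\graf_W)]$, contradicting (1). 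Replacing your blanket filtration bound by this pointed use of the $\thetagraph{}$-relation closes the gap.

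A smaller issue: your $(2)\Leftrightarrow(3)$ asserts via the telescoping construction that ``$c_1=c_2=c_3$ \emph{iff} the star factor admits a cycle representative with support in a single component.'' The forward direction is clear (the standard representative already works), but the converse---that the mere existence of some cycle representative in a single component forces the $c_i$ to coincide---is stated without argument. This is the direction needed for $(2)\Rightarrow(3)$ and deserves a proof; one can, for example, observe that a representative supported in a single component of $\graf_W$ has vanishing image in $E^1_{|W|,0}$ and compare with the computed image of the standard representative, but that verification is not automatic and should be written out.
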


When the equivalent conditions of Proposition \ref{prop:tori facts} hold, we say that $\alpha$ is \emph{rigid}. Concretely, the second condition means that $\alpha$ does not arise from $\graf_w$ for any $w\in W$; indeed, by definition, the $(|W|-1)$st filtration stage is precisely the span of the images of $H_*(B(\graf_w))$ for $w\in W$. 

These notions relate to the Ramos numbers as follows \cite[Cor. 3.16, Lem. 3.18]{AnDrummondColeKnudsen:ESHGBG}.

\begin{proposition}\label{prop:well-separating}
Let $W$ be a set of essential vertices of cardinality $i$.
\begin{enumerate}
\item If $W$ is well-separating, then a rigid $W$-torus exists.
\item If $\Delta^W_\graf=\Delta^i_\graf$, then either $W$ is well-separating or $i=\Delta^i_\graf=1$.
\end{enumerate}
\end{proposition}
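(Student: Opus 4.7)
The plan is to treat the two parts separately: part (1) by explicit construction of a rigid torus using the well-separating hypothesis, and part (2) by contradiction, exploiting the fact that exploding a ``bad'' non-well-separating vertex leaves the $\Delta$-value unchanged.

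For part (1), at each $w \in W$ the well-separating condition supplies half-edges $h_1^w, h_2^w \in H(w)$ in distinct components of $\graf_W$, and essentiality supplies a third $h_3^w$. Let $\alpha^w$ be the star class at $w$ determined by this triple, and form the $W$-torus $\alpha := \bigotimes_{w \in W} \alpha^w$. To verify rigidity via condition (2) of Proposition \ref{prop:tori facts}, I would compute the image of $\alpha$ on $E^1_{|W|,0}$ using the standard representative
\[
a^w = (e_1^w - e_3^w)(h_2^w - h_1^w) - (e_1^w - e_2^w)(h_3^w - h_1^w)
\]
written in the basis privileging $h_1^w$. Expanding $\bigotimes_w a^w$ produces a coefficient of $\pm\prod_w(e_1^w - e_2^w)$ on the generator $\bigotimes_w(h_3^w - h_1^w)$, which is nonzero in $R[\pi_0(\graf_W)]$ since $h_1^w$ and $h_2^w$ lie in distinct components; because $\alpha$ is a homology class, this persists as a permanent cycle to $E^\infty_{|W|,0}$.

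For part (2), suppose $\Delta^W_\graf = \Delta^i_\graf$ and $W$ is not well-separating, with the goal of showing $i = \Delta^i_\graf = 1$. Pick $w \in W$ whose half-edges all lie in one component of $\graf_W$; this is equivalent (using that every component of $\graf_w$ receives at least one half-edge copy of $w$ in the connected setting) to $w$ being a non-cut vertex of its component in $\graf_{W \setminus \{w\}}$, so exploding it does not increase the component count and $\Delta^{W \setminus \{w\}}_\graf = \Delta^i_\graf$. Iterating produces a descending chain of subsets all realizing $\Delta^i_\graf$, terminating either at the empty set or at a proper nonempty well-separating subset $W' \subsetneq W$. If the chain reaches $\emptyset$, then $\Delta^i_\graf = |\pi_0(\graf)| = 1$ under connectedness; combined with the elementary observation that any connected graph with at least two essential vertices contains a direct bivalent path between some pair (which isolates as a component upon their removal, forcing $\Delta^2_\graf \geq 2$), this yields $i \leq 1$, and hence $i = \Delta^i_\graf = 1$.

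The main obstacle is the complementary termination scenario: a nonempty well-separating $W' \subsetneq W$ with $\Delta^{W'}_\graf = \Delta^i_\graf \geq 2$. Here one must derive a contradiction, which I would do by exhibiting an essential vertex $w' \in \essential \setminus W'$ that is a cut vertex of its component in $\graf_{W'}$, yielding $\Delta^{W' \cup \{w'\}}_\graf > \Delta^i_\graf$ despite $|W' \cup \{w'\}| \leq i$, contradicting maximality. Producing such $w'$---by analyzing how leaves introduced by the $W'$-explosion interact with the essential cut-vertices available within each component of $\graf_{W'}$, and using well-separation of $W'$ to ensure the needed leaf structure is present---is the technical heart of the argument.
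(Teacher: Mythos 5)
The paper does not prove this proposition inline---it cites \cite[Cor.\ 3.16, Lem.\ 3.18]{AnDrummondColeKnudsen:ESHGBG}---so I evaluate your attempt on its own merits.

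Your treatment of part (1) is sound. The choice of half-edges at each $w$ with two in distinct components of $\graf_W$, the expansion of $\bigotimes_w a^w$ in the basis privileging $h_1^w$, and the observation that the coefficient of $\bigotimes_w(h_3^w-h_1^w)$ is $\pm\prod_w(e_1^w-e_2^w)$, nonzero in the integral domain $R[\pi_0(\graf_W)]$, are all correct. The ``persists as a permanent cycle'' step is also right, though it deserves one more sentence: since $(|W|,0)$ sits at the top of the filtration there are no incoming differentials, so $E^\infty_{|W|,0}\subseteq E^1_{|W|,0}$ and the symbol of any genuine cycle of filtration exactly $|W|$ is automatically a permanent cycle; your nonvanishing $E^1$-computation therefore does transport to $E^\infty$.

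For part (2) you correctly identify the structure (drop a vertex $w$ whose half-edges fall in a single component, observe $\Delta$ is preserved, then seek a vertex whose addition strictly raises $\Delta$), but you explicitly leave the decisive step---producing an essential cut vertex $w'\notin W'$---unproved, and your sketch of how to produce it (via ``well-separation of $W'$'') points in the wrong direction. This is a genuine gap, but it closes cleanly and without iterating: drop only the single bad vertex $w$ and set $W'=W\setminus\{w\}$, so $\Delta^{W'}_\graf=\Delta^W_\graf=\Delta^i_\graf$. Let $\hat C$ be the component of $\graf\setminus W'$ containing $w$. Since all half-edges of $w$ lie in one component $C$ of $\graf\setminus W$, one checks $\hat C\setminus\{w\}=C$ is connected, so $w$ is \emph{not} a cut vertex of $\hat C$. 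If $i=1$ then $W'=\varnothing$, $\hat C=\graf$, and $\Delta^i_\graf=1$, giving the exceptional conclusion. If $i\geq 2$ then $W'\neq\varnothing$, and since $\graf$ is connected the component $\hat C$ contains at least one dangling half-edge toward $W'$, i.e.\ a degree-one vertex. Now use the elementary fact that a connected graph containing both a degree-one vertex and a vertex of degree at least three has a cut vertex of degree at least three (follow the maximal bivalent path from the leaf inward; its other endpoint has degree $\geq 3$, as degree $1$ would make the whole graph a path, and is a cut vertex). Applied to $\hat C$, which contains the leaf and the degree-$\geq 3$ vertex $w$, this produces an essential cut vertex $u$ of $\hat C$; necessarily $u\neq w$ and $u\notin W'$. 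Then $\Delta^{W'\cup\{u\}}_\graf>\Delta^{W'}_\graf=\Delta^i_\graf$ while $|W'\cup\{u\}|=i$, contradicting maximality. Note that your iteration to a well-separating $W'$ and your separate $\Delta^2_\graf\geq 2$ observation are both unnecessary once the argument is organized this way.
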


Together, Propositions \ref{prop:tori facts} and \ref{prop:well-separating} imply the lower bound in the following result \cite[Thm. 1.2]{AnDrummondColeKnudsen:ESHGBG}, which was the authors' original motivation for considering rigid tori.

\begin{theorem}\label{thm:polynomial degree}
Under the conditions of Theorem \ref{thm:growth}, the dimension of $H_i(B_k(\graf);\mathbb{F})$ coincides with a polynomial of exact degree $\Delta^i_\graf-1$ for $k$ sufficiently large.
\end{theorem}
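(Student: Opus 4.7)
The plan has two stages: show $\dim H_i(B_k(\graf);\mathbb{F})$ is eventually a polynomial in $k$, then pin its degree to $\Delta^i_\graf - 1$ from both sides. For the first stage, note that edge stabilization makes $H_i(B(\graf);\mathbb{F})$ a graded $\mathbb{F}[E]$-module, and this module is finitely generated: the reduced \'Swi\k{a}tkowski complex is free of finite rank over $\mathbb{F}[E]$ (with one generator per choice of vertex state, and only finitely many such choices), and Noetherianity passes finite generation to homology. Hilbert--Serre then gives eventual polynomiality, with the degree equal to one less than the Krull dimension of $\supp H_i(B(\graf);\mathbb{F}) \subseteq \Spec \mathbb{F}[E]$. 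The task reduces to identifying this Krull dimension as exactly $\Delta^i_\graf$.

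The lower bound is essentially laid out in the excerpt. Choose a set $W$ of $i$ essential vertices with $\Delta^W_\graf = \Delta^i_\graf$. The hypothesis $\Delta^i_\graf > 1$ rules out the exceptional case of Proposition \ref{prop:well-separating}(2), so $W$ is well-separating; Proposition \ref{prop:well-separating}(1) produces a rigid $W$-torus $\alpha$; and Proposition \ref{prop:tori facts}(1) identifies the cyclic submodule $\mathbb{F}[E] \cdot \alpha \subseteq H_i(B(\graf);\mathbb{F})$ with $\mathbb{F}[\pi_0(\graf_W)]\langle \alpha \rangle$, a polynomial ring in $\Delta^i_\graf$ variables. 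This submodule has Krull dimension $\Delta^i_\graf$, forcing the ambient module to have at least the same, which yields the required degree lower bound on the Hilbert polynomial.

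The main obstacle is the matching upper bound, which requires showing every class in $H_i(B(\graf);\mathbb{F})$ generates a submodule of Krull dimension at most $\Delta^i_\graf$. My plan is to use the spectral sequence of Proposition \ref{prop:spectral sequence} associated to the full set $W = \essential$ of essential vertices, together with induction on $i$ (base case: trees, handled by Ramos). At filtration degree $p$, the $E^1$ term in total degree $i$ decomposes over $(i-p)$-dimensional homology of the exploded graphs $\graf_{W'}$ for $|W'| = p$, each tensored with half-edge differences at the vertices of $W'$. By Proposition \ref{prop:spectral sequence} and the inductive hypothesis, each such piece is supported on a quotient ring of Krull dimension at most $\max_{|W'| = p} \Delta^{W'}_\graf$, which by the monotonicity $\Delta^p_\graf \leq \Delta^i_\graf$ for $p \leq i$ is bounded above by $\Delta^i_\graf$.

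The hard part will be making this bookkeeping sharp. A naive sum of Krull dimensions across filtration degrees could exceed $\Delta^i_\graf$, so the spectral sequence differentials must be invoked to kill the excess. The crucial relations are the $Q$-relation (forcing loop classes to be annihilated by edge differences, hence to grow strictly slower than tori modulo star classes) and the $X$-relation (reducing the rank of star-class contributions at high-valence vertices). Both are identified in the excerpt as governing the subleading structure; incorporating them systematically into the $E^\infty$ analysis should match the upper bound to the lower bound and pin down the exact degree.
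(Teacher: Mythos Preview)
This theorem is not proved in the present paper; it is quoted from \cite[Thm.~1.2]{AnDrummondColeKnudsen:ESHGBG}, the paper remarking only that the lower bound follows from Propositions~\ref{prop:tori facts} and~\ref{prop:well-separating}. Your Hilbert--Serre reduction and your lower-bound argument are correct and match that remark exactly.

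Your upper-bound plan has a genuine gap. First, the proposed induction is incoherent as stated: ``induction on $i$ with base case trees'' conflates two different inductions, since trees are a base case for induction on $b_1(\graf)$, not on $i$. Second, and more seriously, the spectral sequence for $W=\essential$ carries no information. After smoothing bivalent vertices (which is harmless), a degree-$i$ generator of $\fullyreduced{\graf}$ uses exactly $i$ essential vertices, so the filtration coincides with the homological grading: one finds $E^1_{p,q}=0$ for $q\neq0$ and $E^1_{p,0}=\fullyreduced{\graf}_p$ with $d^1=\partial$, whence $E^2_{i,0}=H_i(B(\graf))$ tautologically. Your claimed decomposition of $E^1_{p,i-p}$ over ``$(i-p)$-dimensional homology of the exploded graphs $\graf_{W'}$'' is incorrect: the factor is $H_q(B(\graf_W))$ for the \emph{fully} exploded graph, not the partially exploded $\graf_{W'}$, and with no bivalent vertices this is just $\mathbb{F}[E]$ in degree $0$. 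So the concern you flag---that Krull dimensions might not add up---is the entire problem, not a residual one, and the $Q$- and $X$-relations you propose to invoke are irrelevant here: they govern the leading coefficient (Theorem~\ref{thm:growth}), not the degree.

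A route that does work is induction on $b_1(\graf)$ via Proposition~\ref{prop:vertex explosion} (or, equivalently, induction on the number of essential vertices via the analogous long exact sequence for exploding an essential vertex). The key is to strengthen the inductive hypothesis from a bound on Krull dimension to a bound on support, namely $\supp_{\mathbb{F}[E]}H_j(B(\graf))\subseteq\bigcup_{|W'|=j}V(I_{W'})$ where $I_{W'}=\ker\bigl(\mathbb{F}[E]\to\mathbb{F}[\pi_0(\graf_{W'})]\bigr)$. For the $b_1$-induction, one checks directly that $\Delta^j_{\graf_v}=\Delta^j_\graf$ for the subdivided graph $\graf_v$, so the two outer terms of the long exact sequence already satisfy the required bound and the middle term inherits it; the base case of trees is then genuinely Ramos's theorem.
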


In particular, $\dim H_i(B_k(\graf);\mathbb{F})$ is asymptotic to a positive constant multiple of $k^{\Delta^i_\graf-1}$, so Theorem \ref{thm:growth} amounts to the identification of this constant. Building on the fact expressed in Proposition \ref{prop:tori facts} that rigid tori achieve the highest possible rate of growth, we will show that all other classes exhibit strictly slower growth, at least modulo tori (Theorem \ref{thm:tame}).

The remainder of this section is devoted to producing a convenient---in particular, independent---set of rigid tori.

\begin{construction}\label{construction:good tori}
Let $W$ be a well-separating set of essential vertices. For each $w\in W$, we fix two edges incident on $w$ and lying in distinct components of $\graf_W$. Abusively, we write $A_w$ for the set of $d(w)-2$ star classes at $w$ (with signs chosen arbitrarily) involving these two edges. We write $A_W\cong \prod_{w\in W} A_w$ for the resulting set of $W$-tori and $A_W(\graf)\subseteq H_{|W|}(B(\graf))$ for the $R[E]$-submodule generated by $A_W$. Note that each $\alpha\in A_W$ is rigid by construction.
\end{construction}

\begin{lemma}\label{lem:good tori free module}
If $W$ is a well-separating set of essential vertices, then the natural map \[R[\pi_0(\graf_W)]\langle A_W\rangle\to A_W(\graf)\] is an isomorphism.
\end{lemma}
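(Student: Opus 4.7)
Surjectivity is immediate from the definition of $A_W(\graf)$ as the $R[E]$-submodule generated by $A_W$ together with Observation \ref{obs:torus quotient}, which forces the $R[E]$-action on each $W$-torus to factor through $R[\pi_0(\graf_W)]$. The substance of the claim is injectivity, which I plan to establish by detecting the tori on the first page of the spectral sequence of Section \ref{section:vertex explosion}.

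By Proposition \ref{prop:tori facts}(2), each rigid $W$-torus maps nontrivially to $E^\infty_{|W|,0}$, so there is an $R[\pi_0(\graf_W)]$-linear composition
\[
\Phi\colon R[\pi_0(\graf_W)]\langle A_W\rangle \to A_W(\graf) \to E^\infty_{|W|,0} \hookrightarrow E^1_{|W|,0} \cong R[\pi_0(\graf_W)]\langle X\rangle,
\]
where the final isomorphism is that of Proposition \ref{prop:spectral sequence}. It suffices to prove $\Phi$ injective.

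I next compute $\Phi$ on generators. The chain-level representative of a star class $\alpha_{1,2,h^w}\in A_w$ in the basis privileging $h_1^w$ is
\[
(e_1^w-e_{h^w})(h_2^w-h_1^w) - (e_1^w-e_2^w)(h^w-h_1^w),
\]
so in the top filtration quotient, where each edge $e$ is identified with the generator $c\in R[\pi_0(\graf_W)]$ associated to its component in $\graf_W$, its image becomes
\[
\beta_w^{h^w} := (c_1^w-c_{h^w})(h_2^w-h_1^w) - (c_1^w-c_2^w)(h^w-h_1^w).
\]
Since each element of $A_W$ is an external product of such star classes, $\Phi$ sends $\bigotimes_w \alpha_{1,2,h^w}$ to the external product $\bigotimes_w \beta_w^{h^w}\in R[\pi_0(\graf_W)]\langle X\rangle$.

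The proof concludes by reading off diagonal coefficients. For a fixed choice $(h^w)_{w\in W}\in \prod_w A_w$, the basis element $\bigotimes_w(h^w-h_1^w)\in X$ appears in the expansion of $\bigotimes_w \beta_w^{\tilde h^w}$ only when $\tilde h^w = h^w$ for every $w$; indeed, because $A_w$ excludes both $h_1^w$ and $h_2^w$, the first summand of each $\beta_w^{\tilde h^w}$ cannot contribute, and the second summand supplies $(h^w-h_1^w)$ precisely when $\tilde h^w = h^w$. The resulting coefficient is $(-1)^{|W|}\prod_w(c_1^w-c_2^w)$, a nonzerodivisor in the polynomial ring $R[\pi_0(\graf_W)]$ since each $c_1^w-c_2^w$ is nonzero by construction (the fixed edges lie in distinct components of $\graf_W$). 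Extracting these diagonal coefficients from a hypothetical relation $\sum p_{(h^w)}\bigotimes_w \beta_w^{h^w}=0$ forces every $p_{(h^w)}=0$, proving injectivity. The only point requiring care is the identification of $\Phi$ on generators via the spectral sequence; once this is in place, the argument is linear algebra in a polynomial ring.
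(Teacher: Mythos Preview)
Your proof is correct and follows essentially the same strategy as the paper: both detect the tori by their images in $E^1_{|W|,0}\cong R[\pi_0(\graf_W)]\langle X\rangle$ and then extract, for each generator, the coefficient of the ``diagonal'' basis element $\bigotimes_w(h^w-h_1^w)$, observing that this coefficient is the nonzerodivisor $\pm\prod_w(c_1^w-c_2^w)$. The only difference is packaging---the paper carries out the chain-level analysis of a bounding $(|W|+1)$-chain explicitly, whereas you invoke Proposition~\ref{prop:spectral sequence} as a black box; your version is slightly cleaner and, as a bonus, yields Corollary~\ref{cor:inject into associated graded} immediately from the injectivity of $\Phi$.
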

\begin{proof}
We will show that the kernel of the composite \[R[E]\left\langle A_W\right\rangle\xrightarrow{\pi_W}R[\pi_0(\graf_W)]\left\langle A_W\right\rangle\to A_W(\graf)\subseteq H_{|W|}(B(\graf))\] is the kernel of $\pi_W$. We proceed by considering a $(|W|+1)$-chain $b$ whose boundary witnesses an element in the kernel, writing \[\partial b=\sum_I p_I(E)\bigotimes_{w\in W} a^w_{i_{w}}.\] Here the summation runs over multi-indices $I=(i_w)_{w\in W}$ with $1\leq i_w\leq d(w)-2$, $p_I(E)$ is a polynomial in the edges, and $a^w_i$ is the standard representative of $\alpha^w_i$, i.e., \[a^w_i=(e_w-e_w')h^w_i-(e_{w}-e_{w,i}')h^w,\] where $e_w$ and $e_w'$ are the fixed edges lying in distinct components of $\pi_0(\graf_W)$, $e'_{w,i}$ is the $i$th edge different from these two, and $h_w$ is the difference of half-edges from $e_w$ to $e_w'$ (resp. $h_i^w$, $e_w$, $e_{w,i}'$). Note that three of the edges and one of the half edge differences in the righthand term are independent of $i$. 

Substituting this expression, we see that, for each $I$, the coefficient of $\bigotimes_{w\in W} h^w_{i_w}$ in $\partial b$ is equal to \[p_I(E)\prod_{w\in W} (e_w-e_w').\] We will argue that this coefficient lies in $\ker(\pi_W)$, which will imply that $p_I(E)\in \ker(\pi_W)$, as desired. Indeed, the kernel is a prime ideal, as $R[\pi_0(\graf_W)]$ is an integral domain, and $e_w-e_w'\notin \ker(\pi_W)$ by construction.

We examine the bounding chain $b$. Necessarily, $b$ is of the form \[b \equiv \sum_s q_s(E)h_s\otimes\bigotimes_{w\in W}h^w_s \pmod{F_{|W|-1}},\]
where $h^w_s$ is a half-edge generator at $w$, $h_s$ is a half-edge generator at a vertex not in $W$, and $F_p=F_p\fullyreduced{\graf}$ denotes the filtration induced by $W$ (as in Section \ref{section:vertex explosion}); indeed, any term in $b$ involves exactly $|W|+1$ half-edge generators, and any term not involving every vertex of $W$ lies in $F_{|W|-1}$ by definition. Then, writing $\partial h_s=e_s-e_s'$, we obtain the equivalence \[
\partial b
\equiv
\sum_sq_s(E)(e_s-e_s')\otimes\bigotimes_{w\in W}h^w_s
\pmod{F_{|W|-1}}
\]
The filtration splits at the level of graded modules, so we actually have the equality
\[
\partial b
=
\sum_sq_s(E)(e_s-e_s')\otimes\bigotimes_{w\in W}h^w_s,\]
since each side is a sum of terms involving every vertex of $W$. Now, $e_s$ and $e'_s$ lie in a common component of $\graf_W$ for each $s$, so the quotient map $\pi_W$ annihilates the difference $e_s-e_s'$ and hence the product $q_s(E)(e_s-e_s')$. Thus, every coefficient of $\partial b$ lies in $\ker(\pi_W)$, as was to be shown.
\end{proof}

We record a consequence of this result for later use.

\begin{corollary}\label{cor:inject into associated graded}
The composite map $A_W(\graf)\subseteq  H_{|W|}(B(\graf))\to E^\infty_{|W|,0}$ is injective.
\end{corollary}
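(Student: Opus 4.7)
The plan is to identify the source as a free module via Lemma \ref{lem:good tori free module} and to compute the image of each generator explicitly in the free module $E^1_{|W|,0}\cong R[\pi_0(\graf_W)]\langle X\rangle$ from Proposition \ref{prop:spectral sequence}. Since $E^\infty_{|W|,0}\subseteq E^1_{|W|,0}$, it will suffice to show that the composite $A_W(\graf)\to E^\infty_{|W|,0}\hookrightarrow E^1_{|W|,0}$ is injective.

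First, I would compute this composite on the standard generators. The standard representative of $\alpha=\bigotimes_w\alpha^w_{i_w}\in A_W$ is the cycle $\bigotimes_w a^w_{i_w}\in F_{|W|}$, and since it involves no half-edge generators outside $W$, its class in $E^1_{|W|,0}$ is the element
\[
\bigotimes_w\left[(\bar e_w-\bar e_w')h^w_{i_w}-(\bar e_w-\bar e_{w,i_w}')h^w\right],
\]
where $\bar{(-)}$ denotes the image under $\pi_W$. Observing that $\{h^w\}\cup\{h^w_i:1\leq i\leq d(w)-2\}$ is a basis for the set $\tilde H(w)$ of half-edge differences at $w$, and that $X\cong\prod_{w\in W}\tilde H(w)$, I would use the resulting tensor decomposition
\[
R[\pi_0(\graf_W)]\langle X\rangle\;\cong\;\bigotimes_{w\in W}^{R[\pi_0(\graf_W)]}R[\pi_0(\graf_W)]\langle\tilde H(w)\rangle
\]
to factor the composite as the tensor product of local maps $f_w\colon R[\pi_0(\graf_W)]\langle A_w\rangle\to R[\pi_0(\graf_W)]\langle\tilde H(w)\rangle$.

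Next, the proof reduces to a local linear-algebra check. If $\sum_i c_i\alpha^w_i$ lies in the kernel of $f_w$, then comparing coefficients of $h^w_i$ in the image gives $(\bar e_w-\bar e_w')c_i=0$, and since $\bar e_w-\bar e_w'$ is nonzero in the integral domain $R[\pi_0(\graf_W)]$---recall that $e_w,e_w'$ were chosen in distinct components of $\graf_W$---we would conclude $c_i=0$. Injectivity of the tensor product then follows from flatness, since the source and target free modules are flat over the integral domain $R[\pi_0(\graf_W)]$.

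The main subtlety I expect is confirming that the chain-level formula above really does describe the image in $E^1_{|W|,0}$ under the identification of Proposition \ref{prop:spectral sequence}. This should be essentially routine once one notes that the standard representative is a genuine cycle in $\fullyreduced{\graf}$ involving no half-edge generators outside $W$, and so simultaneously represents a cycle in $F_{|W|}/F_{|W|-1}$ and a permanent cycle, descending unambiguously to the claimed element of $E^\infty_{|W|,0}\subseteq E^1_{|W|,0}$.
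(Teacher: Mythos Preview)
Your proof is correct and follows essentially the same route as the paper's: both compose with the inclusion $E^\infty_{|W|,0}\subseteq E^1_{|W|,0}$, compute the image of the standard representatives there, and verify linear independence using that $\bar e_w-\bar e_w'$ is a nonzerodivisor in the integral domain $R[\pi_0(\graf_W)]$. The paper simply points back to the coefficient computation inside the proof of Lemma~\ref{lem:good tori free module} (the observation that the coefficient of $\bigotimes_w h^w_{i_w}$ is $p_I(E)\prod_w(e_w-e_w')$), whereas you unpack this as a tensor factorization $\bigotimes_w f_w$ of local maps and invoke flatness of free modules to assemble injectivity; this is a clean self-contained repackaging of the same computation rather than a different argument.
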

\begin{proof}
It suffices to show injectivity after further composing with the inclusion $E^\infty_{|W|,0}\subseteq E^1_{|W|,0}$ of Proposition \ref{prop:spectral sequence}. The full composite sends $\alpha\in A_W$ to the image of its standard representative in the associated graded module. The proof of Lemma \ref{lem:good tori free module} shows that the resulting classes in $E^1_{|W|,0}$ are linearly independent over $R[\pi_0(\graf_W)]$, implying the claim.
\end{proof}

\section{Tori dominate}

In pursuing Theorem \ref{thm:growth}, our most difficult task is to estimate the size of the quotient $H_i(B(\graf))/T_i(\graf)$, where $T_i(\graf)$ is the submodule generated by all $W$-tori with $|W|=i$ (see Section \ref{section:tameness modulo tori}). This quotient vanishes in the case of a tree, so we induct on the first Betti number of $\graf$. In order to carry out this induction, it will be necessary to retain a certain degree of module-theoretic control over the quotient. We begin by specifying the nature of this control.

\subsection{Tame partitions}\label{section:tameness}

Recall that a partition of a set $X$ is an unordered collection $P$ of pairwise disjoint nonempty subsets of $X$ (called blocks) whose union is $X$. We write $x\sim_P y$ when $x$ and $y$ lie in the same block of $P$. The set of partitions of $X$ forms a partially ordered set by declaring that $P\leq P'$ if and only if every block of $P'$ is contained in some block of $P$---thus, the maximal partition is the discrete partition consisting of the singletons, and the minimal partition is the trivial partition with a single block.

\begin{example}
Given a graph $\graf$ and a set $W$ of essential vertices, we write $\pi_0(\graf_W)$ for the partition of $E$ such that $e_1\sim_{\pi_0(\graf_W)} e_2$ if and only if $e_1$ and $e_2$ lie in the in the same connected component of $\graf_W$.
\end{example}

We now formulate our main definition.

\begin{definition}\label{def:tame}
Let $\graf$ be a graph. A nontrivial partition $P$ of $E$ is called $i$-\emph{tame} (with respect to $\graf$) if there is a set of essential vertices $W$ of cardinality at most $i$ with $P\leq \pi_0(\graf_W)$ such that, if $W$ is well-separating, then \begin{enumerate}
\item $P\neq  \pi_0(\graf_W)$, and
\item if $e_1$ and $e_2$ are tails and $e_1\sim _P e_2$, then $e_1\sim_{\pi_0(\graf_W)}e_2$.
\end{enumerate}
By convention, the trivial partition is $i$-tame for every $i>0$.
\end{definition}

There are no $0$-tame partitions. The convention regarding the trivial partition is necessary only when $i=\Delta^i_\graf=1$.

\begin{figure}[ht]
\begin{tikzpicture}
\fill[black] (0,0) circle (2.5pt);
\fill[black] (2,0) circle (2.5pt);
\fill[black] (-2,0) circle (2.5pt);
\fill[black] (-3,1) circle (2.5pt);
\fill[black] (-3,-1) circle (2.5pt);
\fill[black] (3,-1) circle (2.5pt);
\fill[black] (3,1) circle (2.5pt);
\fill[black] (0,1) circle (2.5pt);
\draw(-3,1) -- (-2,0);
\draw(-3,-1) -- (-2,0);
\draw(0,0) -- (-2,0);
\draw(0,0) -- (2,0);
\draw(3,1) -- (2,0);
\draw(3,-1) -- (2,0);
\draw(0,0) -- (0,1);
\draw (-3.3,-1.3) node{$A$}; 
\draw (-3.3,1.3) node{$B$}; 
\draw (-1.7,.3) node{$C$}; 
\draw (0,-.4) node{$D$}; 
\draw (0,1.4) node{$E$}; 
\draw (1.7,.3) node{$F$}; 
\draw (3.3,1.3) node{$G$}; 
\draw (3.3,-1.3) node{$H$}; 
\end{tikzpicture}
\caption{}
\label{fig:partitions}
\end{figure}

\begin{example}\label{example:partitions}
Consider the graph depicted in Figure \ref{fig:partitions}. 
\begin{enumerate}
\item The partition with blocks $\{AC\}$, $\{BC\}$, $\{CD\}$, $\{DE\}$, $\{DF, FG, FH\}$ is $\pi_0(\graf_{\{C,D\}})$. This partition is not $i$-tame for any $i$ (but see Example \ref{example:module though}).
\item\label{the example} The partition with blocks $\{AC\}$, $\{BC, CD\}$, $\{DE\}$, $\{DF, FG, FH\}$ is $2$-tame, since it is obtained from $\pi_0(\graf_{\{C,D\}})$ by merging the blocks containing $BC$ and $CD$, which are not both leaves.
\end{enumerate}
\end{example}

We record a simple fact about tame partitions, which is immediate from the definition.

\begin{lemma}\label{lem:tame move up}
If $P$ is $i$-tame, then $P$ is $(i+1)$-tame.
\end{lemma}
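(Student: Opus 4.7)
The plan is essentially immediate from the definition: the witnessing set for $i$-tameness works verbatim as a witness for $(i+1)$-tameness.

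More precisely, first I would dispose of the trivial partition, which is $j$-tame for every $j>0$ by convention, so in particular $(i+1)$-tame. Then, assuming $P$ is nontrivial and $i$-tame, I would choose a witnessing set of essential vertices $W$ with $|W|\leq i$ and $P\leq \pi_0(\graf_W)$ satisfying conditions (1) and (2) whenever $W$ is well-separating. Since $|W|\leq i\leq i+1$, the same $W$ satisfies the cardinality bound for $(i+1)$-tameness; the comparison $P\leq \pi_0(\graf_W)$ is unchanged; and the conditional requirements (1) and (2) depend only on $W$ and $P$, not on $i$, so they continue to hold in the case that $W$ is well-separating. Thus $W$ witnesses the $(i+1)$-tameness of $P$.

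There is no real obstacle here; the lemma is a sanity check confirming that the numerical parameter $i$ in Definition \ref{def:tame} acts as an upper bound on the size of the witness rather than a rigid equality, so that tameness is monotone in $i$. The only place to be slightly careful is the convention on the trivial partition, which must be handled separately since Definition \ref{def:tame} does not produce a witness $W$ for it; the convention takes care of this case automatically.
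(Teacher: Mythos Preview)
Your proof is correct and is exactly the argument the paper has in mind; the paper records this lemma as ``immediate from the definition'' without further elaboration, and your proposal simply spells out that immediacy.
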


Given a graph $\graf$, we work in the ambient Abelian category of finitely generated $R[E]$-modules. This category receives a contravariant functor from the partially ordered set of partitions of $E$, which sends $P$ to $R[P]$ and $P\leq P'$ to the quotient map $R[P']\to R[P]$.

\begin{lemma}\label{lem:basic growth}
Let $\mathbb{F}$ be a field. For any partition $P$ of $E$, the function $\dim \mathbb{F}[P]:k\mapsto\dim(\mathbb{F}[P])_k$ is a polynomial of degree $|P|-1$.
\end{lemma}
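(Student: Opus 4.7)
The plan is straightforward: unwind what $\mathbb{F}[P]$ is as a graded ring and then compute its Hilbert function directly. As a graded ring, $\mathbb{F}[P]$ is simply the polynomial ring on one generator for each block of $P$, with each generator placed in $k$-grading $1$ (this matches the grading of $\mathbb{F}[E]$, where each edge has particle-count degree $1$, so the projection $\mathbb{F}[E]\to \mathbb{F}[P]$ which identifies edges lying in the same block is a map of graded rings).

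Once this identification is made, the calculation is the textbook count of monomials. The plan is to observe that the monomials of total degree $k$ in $|P|$ variables form a basis of $(\mathbb{F}[P])_k$, so
\[
\dim(\mathbb{F}[P])_k \;=\; \binom{k+|P|-1}{|P|-1} \;=\; \frac{1}{(|P|-1)!}\prod_{j=1}^{|P|-1}(k+j),
\]
which is visibly a polynomial in $k$ of degree $|P|-1$, with leading coefficient $\frac{1}{(|P|-1)!}$ (a leading coefficient that will match the factor appearing in Theorem~\ref{thm:growth}).

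There is no real obstacle here: the only mild subtlety is being careful that the grading convention on $\mathbb{F}[E]$ (inherited from the Świątkowski complex, in which edges have bidegree $(0,1)$) is transported correctly to the quotient $\mathbb{F}[P]$, so that the generators indexed by the blocks really are placed in degree $1$. Once this is verified, the identification of $\mathbb{F}[P]$ with a standard graded polynomial ring is immediate and the lemma follows from the classical Hilbert function computation.
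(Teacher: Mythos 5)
Your proof is correct. The paper does not actually supply a proof of this lemma, treating it as a standard fact, and your argument is precisely the intended one: $\mathbb{F}[P]$ is a graded polynomial ring on $|P|$ degree-one generators, and its Hilbert function $k\mapsto\binom{k+|P|-1}{|P|-1}$ is a polynomial in $k$ of degree $|P|-1$ by the usual stars-and-bars count; your check that the edge generators inherit particle-count degree $1$ from the bigrading convention $|e|=(0,1)$ is the right point to verify.
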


We will be concerned with certain subcategories of the ambient Abelian category. Recall that a subcategory of an Abelian category is called a \emph{Serre subcategory} if it is nonempty, full, and closed under subobjects, quotients, and extensions.

\begin{definition}
Let $\graf$ be a graph. An $R[E]$-module $M$ is called $i$-\emph{tame} (with respect to $\graf$) if it belongs to the Serre subcategory $\cC_R(\graf,i)$ generated by graded shifts of the modules $R[P]$, where $P$ ranges over $i$-tame partitions of $E$.
\end{definition}

From our point of view, the main interest in tameness lies in controlling growth.

\begin{definition}\label{def:small}
We say that a graded vector space $M$ is $n$-\emph{small} if \[\lim_{k\to\infty}\frac{1}{k^n}\dim M_k=0.\]
\end{definition}

\begin{proposition}\label{prop:tame growth}
Let $\mathbb{F}$ be a field. If $M$ is an $i$-tame $\mathbb{F}[E]$-module and either $i>1$ or $\Delta^i_\graf>1$, then $M$ is $(\Delta^i_\graf-1)$-small.
\end{proposition}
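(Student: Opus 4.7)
The plan is to reduce the problem to the generators of the Serre subcategory and then translate it into a combinatorial bound on the size of tame partitions, which I verify by case analysis.

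First I would check that the class of $(\Delta^i_\graf-1)$-small, finitely generated, graded $\mathbb{F}[E]$-modules is itself a Serre subcategory closed under graded shifts. Indeed, subobjects and quotients only decrease dimensions, an extension $0\to A\to B\to C\to 0$ satisfies $\dim B_k=\dim A_k+\dim C_k$, and a graded shift merely reindexes the dimension function. Consequently, this subcategory contains $\cC_\mathbb{F}(\graf,i)$ the moment it contains every $\mathbb{F}[P]$ for $P$ an $i$-tame partition.

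By Lemma \ref{lem:basic growth}, $\dim\mathbb{F}[P]$ is a polynomial of degree $|P|-1$, so $\mathbb{F}[P]$ is $(\Delta^i_\graf-1)$-small if and only if $|P|<\Delta^i_\graf$. For a non-trivial $i$-tame $P$ I would pick a witness $W$ from Definition \ref{def:tame} and use the basic bound $|P|\leq|\pi_0(\graf_W)|\leq\Delta^W_\graf$, then split into three subcases. If $|W|=i$ and $W$ is well-separating, condition (1) of the definition forces $P<\pi_0(\graf_W)$ strictly, so $|P|<|\pi_0(\graf_W)|\leq\Delta^W_\graf\leq\Delta^i_\graf$. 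If $|W|=i$ and $W$ is not well-separating, Proposition \ref{prop:well-separating}(2) gives $\Delta^W_\graf<\Delta^i_\graf$, since the excepted case $i=\Delta^i_\graf=1$ is ruled out by the proposition's hypothesis. If $|W|<i$, iterated application of the observation $\Delta^j_\graf\geq\Delta^{j-1}_\graf+1$ noted in Section \ref{section:torsion} produces $\Delta^W_\graf\leq\Delta^{|W|}_\graf<\Delta^i_\graf$. The trivial partition, covered by convention, satisfies $|P|=1<2\leq\Delta^i_\graf$ by the same monotonicity observation applied to the hypothesis.

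The main obstacle is not conceptual but bookkeeping: one must confirm that the disjunctive hypothesis ``$i>1$ or $\Delta^i_\graf>1$'' simultaneously excludes the exceptional case $i=\Delta^i_\graf=1$ of Proposition \ref{prop:well-separating}(2) and guarantees that the monotonicity observation is applicable so that every subcase yields a strict inequality. Beyond this, no new ideas are required; in particular, condition (2) of Definition \ref{def:tame} concerning tails plays no role in this growth bound and is presumably reserved for later applications of tameness in the induction that dominates Section 3.
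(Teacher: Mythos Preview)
Your proof is correct and follows essentially the same route as the paper's: reduce to the generators $\mathbb{F}[P]$ by the Serre-subcategory observation, then verify $|P|<\Delta^i_\graf$ via the definition of tameness and Proposition \ref{prop:well-separating}. Your write-up simply unpacks into explicit subcases (and correctly isolates the role of the hypothesis in excluding $i=\Delta^i_\graf=1$) what the paper compresses into a single sentence; your remark that condition (2) on tails is irrelevant here is also accurate.
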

\begin{proof}
By rank-nullity, the category of $(\Delta^i_\graf-1)$-small $\mathbb{F}[E]$-modules is itself a Serre subcategory; therefore, we may assume that $M=\mathbb{F}[P]$ for some $i$-tame partition $P$ of $E$. Since $\Delta^W_\graf\leq\Delta^i_\graf$ whenever $|W|\leq i$, it follows from Proposition \ref{prop:well-separating} and the definition of tameness that $|P|< \Delta^i_\graf$, and the claim follows from Lemma \ref{lem:basic growth}.
\end{proof}

Thus, if $M$ is $i$-tame, then the growth of $\dim M_k$ is slower than that of the submodule generated by a $W$-torus with $|W|=i$ and $\Delta^W_\graf=\Delta^i_\graf$, hence slower than that of $H_i(B(\graf))$. The advantage of tameness over smallness is its geometric nature, which permits tameness with respect to $\graf$ to be inferred from tameness with respect to a simpler graph---see Lemma \ref{lem:tame quotient}, for example.

We close with three simple criteria for tameness.

\begin{lemma}\label{lem:inequality tame}
If $P'$ is an $i$-tame partition and $P\leq P'$, then $R[P]\in \cC_R(\graf,i)$.
\end{lemma}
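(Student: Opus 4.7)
The plan is a direct appeal to the closure properties of Serre subcategories, combined with the functoriality already noted in the paragraph preceding the lemma. Concretely, one may present $R[P]$ as $R[E]/I_P$, where $I_P=\langle e_1-e_2 : e_1\sim_P e_2\rangle$ is a homogeneous ideal, and then the hypothesis $P\leq P'$ says that $P'$ refines $P$, which gives $I_{P'}\subseteq I_P$. Consequently the natural projection yields a graded surjection $R[P']\twoheadrightarrow R[P]$; this is precisely the morphism assigned to $P\leq P'$ by the contravariant functor described just above the lemma statement.

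Once this is in hand, the argument takes two lines. Since $P'$ is $i$-tame, the module $R[P']$ is a distinguished generator of the Serre subcategory $\cC_R(\graf,i)$ and so lies in it. Any Serre subcategory is by definition closed under quotients, so $R[P]$, being a quotient of $R[P']$, also lies in $\cC_R(\graf,i)$.

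I anticipate no real obstacle here: the lemma is a formal consequence of how $\cC_R(\graf,i)$ is set up and of the elementary fact that refinement of partitions of $E$ corresponds to further quotienting of $R[E]$. Its role is presumably bookkeeping for later arguments, where one controls a module only by locating a refinement of its associated partition inside the tame locus and wishes to transfer tameness downward through the partition order.
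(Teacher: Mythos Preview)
Your proof is correct and matches the paper's own argument essentially verbatim: the paper simply observes that $R[P]$ is a quotient of $R[P']\in\cC_R(\graf,i)$ and invokes closure under quotients. Your additional unpacking of the surjection $R[P']\twoheadrightarrow R[P]$ via the ideals $I_P\supseteq I_{P'}$ is accurate and just makes explicit the functoriality the paper alludes to in the paragraph before the lemma.
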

\begin{proof}
The module $R[P]$ is a quotient of $R[P']\in \cC_R(\graf, i)$, and $\cC_R(\graf,i)$ is closed under quotients.
\end{proof}

\begin{lemma}\label{lem:add a vertex} 
Let $W$ be a proper subset of the essential vertices of the connected graph $\graf$. Then $R[\pi_0(\graf_W)] \in \cC_R(\graf,|W|+1)$.
\end{lemma}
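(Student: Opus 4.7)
The plan is to realize $R[\pi_0(\graf_W)]$ as a quotient of $R[P]$ for an appropriate $(|W|+1)$-tame partition $P$, whereupon Lemma \ref{lem:inequality tame} finishes the job. If $W$ is not well-separating, $\pi_0(\graf_W)$ is itself $|W|$-tame with witness $W$ (the conditions (1) and (2) of Definition \ref{def:tame} being vacuous), or else $\pi_0(\graf_W)$ is trivial and the convention applies; passing to $(|W|+1)$-tameness via Lemma \ref{lem:tame move up} then concludes.

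Assume $W$ is well-separating. Since $W$ is a proper subset of the essential vertices of the connected graph $\graf$, I would choose $v \notin W$ essential of minimum $\graf$-distance to $W$, and set $W' = W \cup \{v\}$. If $v$ is not a cut vertex of its $\graf_W$-component, then $\pi_0(\graf_{W'}) = \pi_0(\graf_W)$ and $v$'s half-edges all lie in one component of $\graf_{W'}$, so $W'$ fails to be well-separating; taking $P = \pi_0(\graf_W)$ with witness $W'$ works (conditions vacuous). Otherwise $\pi_0(\graf_{W'})$ strictly refines $\pi_0(\graf_W)$; let $C_1, \ldots, C_r$ be the subblocks into which the $v$-block of $\pi_0(\graf_W)$ splits. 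I would then define $P$ by merging some tail-free $C^*$ with an arbitrary other $C_j$, leaving the partition on the rest of $E$ unchanged. The containment $\pi_0(\graf_W) \leq P < \pi_0(\graf_{W'})$ gives (1), and (2) holds because all tails in the merged block $C^* \cup C_j$ come from $C_j$ alone (as $C^*$ is tail-free) and hence share a $\pi_0(\graf_{W'})$-block. Consequently $P$ is $(|W|+1)$-tame with witness $W'$, and Lemma \ref{lem:inequality tame} delivers the conclusion.

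The main obstacle is producing the tail-free $C^*$. The key observation rests on the minimality of $d(v, W)$: along any shortest path $v = p_0, p_1, \ldots, p_d = w \in W$, every intermediate $p_i$ (with $0 < i < d$) must be non-essential, for otherwise $p_i$ would be an essential non-$W$ vertex closer to $W$ than $v$. Being non-essential and of path-enforced valence at least $2$, such a $p_i$ has valence exactly $2$ and therefore no side edges. Consequently the branch of the $v$-component of $\graf_W$ containing this path (after cutting $v$) consists of precisely the path edges together with the exploded $w$-copy at the far end; every such edge is non-tail since each of its endpoints has valence at least $2$ in $\graf$. This yields the required $C^*$.
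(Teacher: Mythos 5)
Your argument is correct in substance and follows the same overall strategy as the paper: enlarge the witness to $W' = W \cup \{v\}$ for a suitable essential $v$, produce a partition $P$ with $\pi_0(\graf_W) \leq P \leq \pi_0(\graf_{W'})$ that is $(|W|+1)$-tame, and conclude via Lemma~\ref{lem:inequality tame}. The paper takes $v$ to be \emph{any} vertex adjacent to $W$ and merges the resulting singleton block $\{e_0\}$ (the connecting edge, which has two essential endpoints and so is not a tail) with the block of another edge at $v$; you instead take $v$ to be the essential vertex nearest to $W$ and merge the tail-free path-block $C^*$ with another. Both choices are engineered to make condition~(2) of Definition~\ref{def:tame} hold vacuously. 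Your version is actually somewhat more careful than the paper's: the paper's phrasing ``any vertex adjacent to $W$'' tacitly assumes that such a vertex can be taken essential (so that $W'$ is a legal witness), which need not be the case if the edges incident on $W$ pass through bivalent vertices before reaching the next essential vertex; your nearest-essential-vertex choice, together with the observation that the intermediate path vertices are forced to be bivalent, sidesteps this issue cleanly. Your separate treatment of Case~A (where $W'$ is not well-separating) is also not strictly necessary --- the paper's unconditional merge works there too --- but it is harmless.

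One small oversight: your opening reduction does not actually cover $W = \varnothing$. The empty set is (vacuously) well-separating, so it falls into your ``Assume $W$ is well-separating'' branch, where the minimum-distance choice is meaningless. The paper dispatches this case explicitly: for connected $\graf$, $\pi_0(\graf_\varnothing)$ is the trivial partition, which is $1$-tame by convention. You should add this sentence before assuming $W$ is well-separating and nonempty.
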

\begin{proof}
If $W=\varnothing$, then $\pi_0(\graf_W)$ is the trivial partition, which is $1$-tame. Otherwise, define $W'$ by adding any vertex $w$ to $W$ that is adjacent to one of its vertices. The edge connecting these two vertices is a singleton in $\pi_0(\graf_{W'})$ and we define a new partition $P$ by identifying this block with the block of any other edge at $w$. Then $P$ is $(|W|+1)$-tame, and $\pi_0(\graf_W)\leq P$, so the claim follows from Lemma \ref{lem:inequality tame}.
\end{proof}

\begin{lemma}\label{lem:non-rigid tame}
If $\alpha$ is a non-rigid $W$-torus, then $R[E]\cdot\alpha\in \cC_R(\graf, |W|)$.
\end{lemma}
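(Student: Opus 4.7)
My plan is to sharpen Observation \ref{obs:torus quotient} by deriving an extra annihilating relation from non-rigidity, showing that $R[E]\cdot\alpha$ is a quotient of $R[\pi_0(\graf_{W\setminus\{w_0\}})]$ for an appropriate $w_0\in W$. Since Lemma \ref{lem:add a vertex} places this larger ring in $\cC_R(\graf,|W|)$, closure under quotients delivers the conclusion.

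Since rigidity is defined under the hypothesis that $W$ is well-separating, I may adopt this assumption. By Proposition \ref{prop:tori facts}(3), non-rigidity yields a vertex $w_0\in W$ and a star factor $\alpha^{w_0}=\alpha^{w_0}_{h_1 h_2 h_3}$ whose edges $e_1,e_2,e_3$ all lie in a single component $C$ of $\graf_W$. Well-separation at $w_0$ supplies a further half-edge $h_4\in H(w_0)$ with edge $e_4$ in some component $C'\neq C$. Applying Corollary \ref{cor:combined X} at $w_0$ and taking the external product with the unchanged star factors at the vertices of $W\setminus\{w_0\}$, I obtain
\[
(e_4-e_1)\alpha=(e_3-e_1)\alpha'-(e_2-e_1)\alpha'',
\]
where $\alpha',\alpha''$ are the $W$-tori obtained from $\alpha$ by replacing the $w_0$-factor with $\alpha^{w_0}_{h_1 h_2 h_4}$ and $\alpha^{w_0}_{h_1 h_3 h_4}$, respectively. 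Since $e_1,e_2,e_3$ lie in the common component $C$ of $\graf_W$, Observation \ref{obs:torus quotient} kills each term on the right, so $(e_4-e_1)\alpha=0$.

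Letting $e_4$ range over edges in every component of $\graf_W$ touched by $w_0$, and combining with the basic same-component relations of Observation \ref{obs:torus quotient}, I conclude that the annihilator of $\alpha$ in $R[E]$ contains every difference $e-e'$ with $e,e'$ in a common component of $\graf_{W\setminus\{w_0\}}$---reinstating $w_0$ merges exactly those components of $\graf_W$ adjacent to $w_0$. Hence $R[E]\cdot\alpha$ is a quotient of $R[\pi_0(\graf_{W\setminus\{w_0\}})]$. Since $w_0$ is an essential vertex not in $W\setminus\{w_0\}$, the latter is a proper subset of the essential vertices, and assuming $\graf$ connected (the general case being componentwise via K\"unneth), Lemma \ref{lem:add a vertex} gives $R[\pi_0(\graf_{W\setminus\{w_0\}})]\in\cC_R(\graf,|W|)$, so $R[E]\cdot\alpha$ belongs to the same Serre subcategory.

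The principal obstacle, and the crux of the argument, is recognizing that the single new relation $(e_4-e_1)\alpha=0$ extracted from Corollary \ref{cor:combined X}, when aggregated over the components touched by $w_0$, is exactly the kernel of the projection $R[\pi_0(\graf_W)]\to R[\pi_0(\graf_{W\setminus\{w_0\}})]$, perfectly aligning the algebraic relation coming from the $X$-relation with the combinatorial operation of deleting a vertex from $W$.
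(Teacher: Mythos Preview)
Your argument is correct and reaches the same intermediate conclusion as the paper---that $R[E]\cdot\alpha$ is a quotient of $R[\pi_0(\graf_{W\setminus\{w_0\}})]$, whence Lemma~\ref{lem:add a vertex} applies---but arrives there by a different relation. The paper invokes the $\thetagraph{}$-relation (Lemma~\ref{lem:theta}): since the three half-edges of the offending star factor lie in a common component of $\graf_W$, that star class can be slid off $w_0$ entirely, so the $R[E]$-action on $\alpha$ already factors through $R[\pi_0(\graf_{W\setminus\{w_0\}})]$. You instead extract the extra annihilating relations $(e_4-e_1)\alpha=0$ directly from Corollary~\ref{cor:combined X}, using that the right-hand side is killed by Observation~\ref{obs:torus quotient} because $e_1,e_2,e_3$ share a component. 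Your route is more explicit and stays at a single vertex; the paper's is shorter but leans on a two-vertex relation whose application here is stated rather tersely. Note that your argument implicitly uses $d(w_0)\geq 4$ to have an $h_4$ available, which is forced: if $d(w_0)=3$ then $h_1,h_2,h_3$ exhaust $H(w_0)$ and well-separation at $w_0$ fails.

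One small gap: you dismiss the case where $W$ is not well-separating by saying rigidity is only defined under that hypothesis, but the paper (see the proof of Corollary~\ref{cor:direct sum}) applies this lemma to tori supported at non-well-separating vertex sets, treating all such tori as non-rigid. That case is immediate---if $W$ is not well-separating then $\pi_0(\graf_W)$ is $|W|$-tame directly from Definition~\ref{def:tame}, and $R[E]\cdot\alpha$ is a quotient of $R[\pi_0(\graf_W)]$ by Observation~\ref{obs:torus quotient}---but it should be stated.
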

\begin{proof}
By Observation \ref{obs:torus quotient}, $R[E]\cdot\alpha$ receives a surjection from $R[\pi_0(\graf_W)]$. If $W$ is not well-separating, the partition $\pi_0(\graf_W)$ is $|W|$-tame by definition, so assume otherwise. By Proposition \ref{prop:tori facts}, $\alpha$ has a star factor---at the vertex $w\in W$, say---whose half-edges all lie in a single component of $\graf_W$. The $\thetagraph{}$-relation implies that $R[E]\cdot\alpha$ admits a surjection from $R[\pi_0(\graf_{W\setminus\{w\}})]$, which is $|W|$-tame by Lemma \ref{lem:add a vertex}.
\end{proof}

\begin{example}\label{example:module though}
Although the partition $\pi_0(\graf_{\{C,D\}})$ of Example \ref{example:partitions} is not $i$-tame for any $i$, Lemma~\ref{lem:non-rigid tame} shows that the module $R[\pi_0(\graf_{\{C,D\}})]$ is $3$-tame.
\end{example}

\subsection{Tameness modulo tori}\label{section:tameness modulo tori}

Our present goal is to prove that the inclusion of the submodule generated by tori is an isomorphism after localizing at the subcategory of tame modules. Fixing $W$, we write $T_W(\graf)\subseteq H_i(B(\graf))$ for the $R[E]$-submodule generated by all $W$-tori. We write $T_i(\graf)$ for the $R[E]$-span of the $T_W(\graf)$, where $W$ ranges over all subsets of essential vertices of cardinality $i$.

\begin{theorem}\label{thm:tame}
If $\graf$ is a connected graph with an essential vertex, then the quotient $H_i(B(\graf))/T_i(\graf)$ is $i$-tame.
\end{theorem}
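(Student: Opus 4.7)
The plan is to proceed by induction on the first Betti number $b_1(\graf)$. For the base case, when $\graf$ is a tree, the known generation result for trees \cite{MaciazekSawicki:NAQSG} (invoked earlier in the introduction) implies $T_i(\graf) = H_i(B(\graf))$, so the quotient vanishes and is vacuously $i$-tame.

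For the inductive step with $b_1(\graf) \geq 1$, I would choose an edge $e_0$ lying in a cycle of $\graf$, subdivide it to create a bivalent vertex $v$ with incident edges $e, e'$, and let $\graf' := \graf_v$ denote the resulting explosion. Then $b_1(\graf') = b_1(\graf) - 1$ and the essential vertices of $\graf'$ and $\graf$ coincide, since $v$ is bivalent. Proposition \ref{prop:vertex explosion} produces a long exact sequence from which one extracts the short exact sequence
\[
0 \to \iota\bigl(H_i(B(\graf'))\bigr) \to H_i(B(\graf)) \to \ker_{e-e'}\bigl(H_{i-1}(B(\graf'))\bigr) \to 0,
\]
where the first map factors through the reduction $H_i(B(\graf'))/(e-e')H_i(B(\graf'))$. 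By the inductive hypothesis applied at degrees $i$ and $i-1$, both $H_i(B(\graf'))/T_i(\graf')$ and $H_{i-1}(B(\graf'))/T_{i-1}(\graf')$ are tame with respect to $\graf'$.

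To pass tameness through this short exact sequence I would establish two claims. First, $\iota(T_i(\graf')) \subseteq T_i(\graf)$ by functoriality of star classes, and a $j$-tame $R[E(\graf')]$-module becomes a $j$-tame $R[E(\graf)]$-module after base change along the surjection $R[E(\graf')] \twoheadrightarrow R[E(\graf)]$ identifying $e, e' \mapsto e_0$. Second, for the kernel term, decompose modulo $T_{i-1}(\graf')$: the $(i-1)$-tame complement remains tame under the kernel operation (and hence $i$-tame by Lemma \ref{lem:tame move up}), while a kernel class originating from a $W$-torus $\beta$ with $(e-e')\beta = 0$ corresponds to $e, e'$ lying in a common component of $(\graf')_W$. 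A preimage under $\psi$ of such a $\beta$ is geometrically a product of the new loop class through $e_0$ with $\beta$; applying the Q-relation (Lemma \ref{lem:Q}) at an essential vertex $w$ on this new cycle (one exists by connectedness and the presence of an essential vertex) trades this product, modulo a tame quotient, for a genuine $W \cup \{w\}$-torus in $T_i(\graf)$.

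The main obstacle is the tameness transfer. A partition $P$ of $E(\graf')$ certifying $j$-tameness descends to a partition $\bar P$ of $E(\graf)$ by merging the blocks containing $e$ and $e'$, but this merging can collapse the strict inequality $P < \pi_0((\graf')_W)$ required by Definition \ref{def:tame}, and it can also spoil the tail condition on $e, e'$ if they are accidentally merged. Resolving these degenerate cases will likely require augmenting $W$ by an essential vertex lying on the cycle through $e_0$, producing a new set whose associated partition $\pi_0(\graf_{W \cup \{w\}})$ certifies $i$-tameness. Similarly, the Q-relation argument in the kernel analysis requires $W \cup \{w\}$ to have the correct cardinality and, where possible, to be well-separating; the most delicate bookkeeping will consist of verifying that in all cases the resulting partition satisfies conditions (1) and (2) of the definition of tameness.
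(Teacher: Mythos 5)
Your overall strategy---induction on $b_1(\graf)$ via vertex explosion, analyzing the short exact sequence from Proposition~\ref{prop:vertex explosion}, and transferring tameness from $\graf_v$ to $\graf$ via the quotient $e'\mapsto e$---is exactly the paper's approach, and you have correctly identified that the transfer of tameness under this quotient (the paper's Lemma~\ref{lem:tame quotient}, relying on the tail clause in Definition~\ref{def:tame}) is a crucial technical point. The treatment of $\im(\iota)/T_i(\graf)$ you sketch is also essentially what the paper does, via a diagram chase reducing to $Q/(e-e')$ with $Q=H_i(B(\graf_v))/T_i(\graf_v)$.

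The genuine gap is in your analysis of $T_{i-1}(\graf')\cap\ker(e-e')$. You assert that ``a kernel class originating from a $W$-torus $\beta$ with $(e-e')\beta=0$ corresponds to $e,e'$ lying in a common component of $(\graf')_W$,'' and then treat the kernel as being spanned by such tori, each handled via the $Q$-relation. This description only accounts for kernel elements supported on a single torus. But $T_W(\graf')$ is not free on the set of $W$-tori: the $\thetagraph{}$-relation (Lemma~\ref{lem:theta}), the unstable relation (Lemma~\ref{lem:unstable X}), and especially the $X$-relation (Lemma~\ref{lem:stable X}) produce $R[E]$-linear dependencies among tori. Consequently a linear combination of tori, each individually $(e-e')$-torsion-free, can lie in $\ker(e-e')$ even when $e$ and $e'$ lie in \emph{different} components of $(\graf')_W$. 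Controlling this part of the kernel is where the real work lies: the paper introduces the free submodule $A_W(\graf)$ generated by a carefully chosen set of ``good'' tori (Construction~\ref{construction:good tori}, Lemma~\ref{lem:good tori free module}), shows $T_W/A_W$ is tame via the $X$-relation (Lemma~\ref{lem:quotient by good tori}), and deduces that the kernel of the natural map $T_W(\graf)\to E^\infty_{|W|,0}$---which contains $T_W\cap\ker(e-e')$ in the separated case---is $(|W|+1)$-tame (Proposition~\ref{prop:tame graded kernel}). Your sketch, which invokes only the $Q$-relation, captures the non-separated case (this is Lemma~\ref{lem:tame no separation}) but misses the separated case entirely, and also omits the bookkeeping needed to pass from individual $T_W$ to the full sum $T_{i-1}(\graf')$ (the $\operatorname{Tor}$ exact sequences together with Corollary~\ref{cor:direct sum}). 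Without Proposition~\ref{prop:tame graded kernel} or a substitute for it, the inductive step does not close.
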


The strategy of the proof of Theorem \ref{thm:tame} is induction on $b_1(\graf)$. For the induction step, we will choose an edge $e$ of $\graf$, both vertices of which are essential, and subdivide it by adding a bivalent vertex $v$. We denote the resulting edges of $\graf_v$ by $e$ and $e'$, so that the natural map $E(\graf_v)\to E(\graf)$ is the quotient identifying $e$ and $e'$ in the source with $e$ in the target. Given a partition $P$ of $E(\graf_v)$, write  $P_{e\sim e'}$ for the partition of $E(\graf)$ obtained by identifying the respective blocks of $P$ containing $e$ and $e'$. 

\begin{lemma}\label{lem:tame quotient partitions}
If $P$ is $i$-tame with respect to $\graf_v$, then $P_{e\sim e'}$ is $i$-tame with respect to $\graf$. 
\end{lemma}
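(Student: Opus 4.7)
The plan is to use the same set $W$ as witness on both sides. If $P$ is the trivial partition, then so is $P_{e\sim e'}$, and the conclusion is immediate from the convention in Definition \ref{def:tame}. So assume $P$ is nontrivial and let $W$ be a set of essential vertices of $\graf_v$ with $|W|\leq i$ and $P\leq \pi_0((\graf_v)_W)$ witnessing the $i$-tameness of $P$. I would begin by setting up the dictionary between $\graf_v$ and $\graf$: since $v$ is bivalent, the essential vertices of $\graf$ and $\graf_v$ coincide, so $W$ can be regarded as a set of essential vertices of $\graf$; similarly, the tails of $\graf$ and $\graf_v$ coincide, because neither $e$ nor $e'$ has an endpoint of valence $1$ (the endpoints of $e$ being essential by the choice governing the subdivision, and $v$ being bivalent). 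The natural bijection between the components of $(\graf_v)_W$ and of $\graf_W$ is compatible with the quotient $E(\graf_v)\to E(\graf)$ identifying $e$ with $e'$; and, because $v\notin W$, the vertex $v$ continues to connect $e$ and $e'$ in $(\graf_v)_W$, so $e$ and $e'$ lie in a common block of $\pi_0((\graf_v)_W)$ and hence of the coarser partition $P$.

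Given this dictionary, the verification is routine. First, $P_{e\sim e'}$ is nontrivial: since $e$ and $e'$ already share a block of $P$, the quotient $E(\graf_v)\to E(\graf)$ simply deletes one of them from that block, leaving the number of blocks unchanged, so $|P_{e\sim e'}|=|P|\geq 2$. Second, $P_{e\sim e'}\leq \pi_0(\graf_W)$: each block of $\pi_0(\graf_W)$ is the image of a block of $\pi_0((\graf_v)_W)$, which is contained in some block of $P$, whose image in $E(\graf)$ is a block of $P_{e\sim e'}$. Third, whether $W$ is well-separating is invariant under the component bijection, so this condition transports without issue.

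Finally, if $W$ is well-separating one must check conditions (1) and (2) of Definition \ref{def:tame}. For (1), the same block-counting argument gives $|\pi_0(\graf_W)|=|\pi_0((\graf_v)_W)|$ and $|P_{e\sim e'}|=|P|$, so the strict inequality $|P|<|\pi_0((\graf_v)_W)|$ furnished by (1) for $P$ transports directly to $|P_{e\sim e'}|<|\pi_0(\graf_W)|$. For (2), two tails in $E(\graf)$ are $P_{e\sim e'}$-equivalent if and only if they are $P$-equivalent (neither being $e$ or $e'$), and likewise for $\pi_0((\graf_v)_W)$ and $\pi_0(\graf_W)$, so (2) for $P$ yields (2) for $P_{e\sim e'}$.

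The only point requiring care is the strict inequality in (1), which could conceivably collapse under the identification $e\sim e'$; the plan to rule this out is the observation highlighted above, that the strict coarsening of $P$ over $\pi_0((\graf_v)_W)$ is carried entirely by mergings of blocks \emph{other} than that containing $e$ and $e'$, since the latter are already identified in $\pi_0((\graf_v)_W)$. I do not foresee any further obstacle.
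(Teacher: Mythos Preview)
Your argument rests on a misreading of the notation $\graf_v$. In this paper $\graf_v$ denotes the graph obtained by \emph{exploding} the (newly added, bivalent) vertex $v$, not the subdivided graph containing $v$; see Section~\ref{section:vertex explosion} and the sentence ``an Euler characteristic calculation shows that $b_1(\graf_v)=b_1(\graf)-1$'' in the proof of Theorem~\ref{thm:tame}. Under the correct reading, $v$ is \emph{not} a vertex of $\graf_v$, and each of $e,e'$ has a univalent endpoint created by the explosion. Hence your three core claims fail: $e$ and $e'$ \emph{are} tails of $\graf_v$; there is no vertex ``$v$'' in $(\graf_v)_W$ connecting them; and the map $\pi_0((\graf_v)_W)\to\pi_0(\graf_W)$ need not be a bijection (it collapses two components to one precisely when $e$ and $e'$ lie in distinct components of $(\graf_v)_W$). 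In particular, nothing forces $e\sim_P e'$.

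The paper's proof therefore splits into two genuine cases. When $e\sim_P e'$, the tail condition (2) for $P$ (applied to the tails $e,e'$ of $\graf_v$) gives $e\sim_{\pi_0((\graf_v)_W)}e'$, after which the component bijection holds and your style of argument goes through. When $e\not\sim_P e'$, one must check condition (2) for $P_{e\sim e'}$ by hand: two tails $e_1,e_2$ of $\graf$ newly merged in $P_{e\sim e'}$ satisfy (up to relabeling) $e_1\sim_P e$ and $e_2\sim_P e'$, and since all four are tails of $\graf_v$ one gets $e_1\sim_{\pi_0((\graf_v)_W)}e$ and $e_2\sim_{\pi_0((\graf_v)_W)}e'$, whence $e_1\sim_{\pi_0(\graf_W)}e_2$ because the components of $e$ and $e'$ coalesce in $\graf_W$. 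Condition (1) in this second case also needs a separate (short) argument. Your proposal covers neither of these steps.
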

\begin{proof}
Let $W$ be a set of essential vertices witnessing $P$ as $i$-tame with respect to $\graf_v$. We will show that $W$ also witnesses $P_{e\sim e'}$ as $i$-tame with respect to $\graf$. Since \[P_{e\sim e'}\leq \pi_0((\graf_v)_W)_{e\sim e'}=\pi_0(\graf_W),\] we may assume that $W$ is well-separating in $\graf$, hence in $\graf_v$. There are now two cases.

If $e\sim_P e'$, then $e$ and $e'$ lie in the same component of $(\graf_v)_W$, since $e$ and $e'$ are tails of $\graf_v$. Thus, the inclusion of $(\graf_v)_W$ into $\graf_W$ induces a bijection on connected components. Given tails $e_1$ and $e_2$ of $\graf$ lying in the same block of $P_{e\sim e'}$, it follows that $e_1\sim_P e_2$, hence $e_1\sim_{\pi_0((\graf_v)_W)}e_2$ by tameness, and the claim follows.

If $e\not\sim_P e'$, then $e$ and $e'$ lie in distinct components of $(\graf_v)_W$. Given tails $e_1$ and $e_2$ of $\graf$ lying in the same block of $P_{e\sim e'}$, one of the following situations obtains, up to relabeling: either $e_1\sim_P e_2$, or else $e_1\sim_P e$ and $e_2\sim_P e'$. Since all four are tails of $\graf_v$, it follows in either case that $e_1\sim_{\pi_0(\graf_W)}e_2$. The verification that $P_{e\sim e'}<\pi_0(\graf_W)$ is left to the reader.
\end{proof}

This result has an important consequence for modules, which is the motivation for the condition on tails in the definition of tameness.

\begin{lemma}\label{lem:tame quotient}
If $M$ is $i$-tame with respect to $\graf_v$, then $M/(e-e')$ is $i$-tame with respect to $\graf$.
\end{lemma}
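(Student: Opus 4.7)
The plan is to enlarge the data so that the argument can be run purely by Serre closure. Since $R$ is a Noetherian integral domain, $e-e'$ is a non-zero-divisor in the polynomial ring $R[E(\graf_v)]$, and the two-term Koszul resolution of $R[E(\graf)]=R[E(\graf_v)]/(e-e')$ shows that the only nontrivial derived functors of $-\otimes_{R[E(\graf_v)]}R[E(\graf)]$ are $\Tor_0(-)=-/(e-e')$ and $\Tor_1(-)=(-)[e-e']$, the $(e-e')$-torsion submodule. Both are annihilated by $e-e'$ and so naturally carry $R[E(\graf)]$-module structures. I would let $\mathcal{A}$ denote the full subcategory of finitely generated $R[E(\graf_v)]$-modules $M$ such that both $\Tor_0(M)$ and $\Tor_1(M)$ lie in $\cC_R(\graf,i)$, and aim to prove $\cC_R(\graf_v,i)\subseteq\mathcal{A}$.

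The first step is to show that $\mathcal{A}$ is itself a Serre subcategory. The snake lemma applied to multiplication by $e-e'$ on a short exact sequence $0\to M'\to M\to M''\to 0$ produces the six-term exact sequence
\[
0\to M'[e-e']\to M[e-e']\to M''[e-e']\to M'/(e-e')\to M/(e-e')\to M''/(e-e')\to 0,
\]
and a two-out-of-three diagram chase within the Serre category $\cC_R(\graf,i)$ establishes that membership in $\mathcal{A}$ propagates across any short exact sequence, each unknown Tor value being built from subobjects and quotients of those assumed known. The second step is to verify that $\mathcal{A}$ contains the generators of $\cC_R(\graf_v,i)$, namely graded shifts of $R[P]$ for $P$ an $i$-tame partition of $E(\graf_v)$. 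A direct computation identifies $R[P]/(e-e')\cong R[P_{e\sim e'}]$ as an $R[E(\graf)]$-module, while $R[P][e-e']$ is either $0$ (when $e\not\sim_P e'$, since then $[e]_P-[e']_P$ is a non-zero-divisor in the integral domain $R[P]$) or isomorphic to $R[P_{e\sim e'}]$ (when $e\sim_P e'$, since $e-e'$ then already acts as zero). Both outputs lie in $\cC_R(\graf,i)$ by Lemma~\ref{lem:tame quotient partitions}, and graded shifts are handled identically.

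The hard part, and the reason the enlargement to $\mathcal{A}$ is necessary at all, is the closure-under-subobjects axiom for a Serre subcategory: the class of modules $M$ with $M/(e-e')\in\cC_R(\graf,i)$ alone is not closed under subobjects, because $-/(e-e')$ is only right exact and a submodule inclusion need not remain injective after quotienting. Carrying along the $(e-e')$-torsion as an auxiliary invariant precisely repairs this failure, since the six-term snake sequence then correctly accounts for the defect of left exactness and allows the Serre-closure machinery to run unimpeded.
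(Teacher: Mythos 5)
Your approach matches the paper's—the snake lemma for base change along $R[E(\graf_v)]\to R[E(\graf)]$, a check on generators, then a Serre-closure argument—and you have made a genuine improvement. The paper itself works only with the naive class $\mathcal{A}'=\{M : M/(e-e')\in\cC_R(\graf,i)\}$, and to establish closure under subobjects it invokes the assertion that $\mathrm{Tor}(P)$ injects into $P/(e-e')$. That assertion is false in general: for $P=R[E(\graf_v)]/(e-e')^2$, the $(e-e')$-torsion $(e-e')/(e-e')^2$ has zero image in $P/(e-e')$. Your instinct to carry $\Tor_1$ alongside $\Tor_0$ is exactly the right repair in spirit, and it correctly identifies the failure of right exactness as the obstruction.

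However, the Serre-ness argument for $\mathcal{A}$ is not complete as written. A two-out-of-three chase—given a short exact sequence $0\to M'\to M\to M''\to 0$, if two of the three lie in $\mathcal{A}$ then so does the third—does follow from the six-term sequence, but this is strictly weaker than the Serre axioms, which require $M\in\mathcal{A}$ \emph{alone} to force both $M'\in\mathcal{A}$ and $M''\in\mathcal{A}$. When only $M\in\mathcal{A}$ is assumed, the sequence immediately controls $\Tor_1(M')$ (a subobject of $\Tor_1(M)$) and $\Tor_0(M'')$ (a quotient of $\Tor_0(M)$), but $\Tor_1(M'')$ and $\Tor_0(M')$ are coupled through the image $Y$ of the connecting homomorphism $\Tor_1(M'')\to\Tor_0(M')$: each of the two unknown Tors is an extension involving $Y$ and a piece already known to lie in $\cC_R(\graf,i)$, so their membership is equivalent to $Y\in\cC_R(\graf,i)$, and this does not follow from $\Tor_0(M),\Tor_1(M)\in\cC_R(\graf,i)$ by any direct diagram chase. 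The claim that $\mathcal{A}$ is Serre is nevertheless true, but it requires an extra input. For instance, since $R[E(\graf_v)]$ is Noetherian and $e-e'$ is a non-zero-divisor, one has $\supp\Tor_0(M)\cup\supp\Tor_1(M)=\supp M\cap V(e-e')$ for finitely generated $M$, so $\mathcal{A}$ is cut out by a support condition; combined with the classification of Serre subcategories of finitely generated graded modules over a Noetherian ring by specialization-closed subsets of $\Spec$, this shows $\mathcal{A}$ is Serre. Without some such argument, both your proof and the paper's have the same unfilled hole at the subobject axiom.
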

\begin{proof}
Since $R[P]/(e-e')\cong R[P_{e \sim e'}]$, Lemma \ref{lem:tame quotient partitions} implies the claim for $M=R[P]$ with $P$ an $i$-tame partition of $E(\graf_v)$.
Therefore, it suffices to show that the collection of modules $M$ for which $M/(e-e')\in \cC(\graf, i)$ forms a Serre subcategory. From an exact sequence \[0\to M\to N\to P\to 0\] of $R[E(\graf_v)]$-modules, there arises the exact sequence \[\mathrm{Tor}(P)\to M/(e-e')\to N/(e-e')\to P/(e-e')\to 0,\] where $\mathrm{Tor}$ indicates the $(e-e')$-torsion submodule. Assuming that $M/(e-e')$ and $P/(e-e')$ lie in $\cC(\graf, i)$, it is immediate that $N/(e-e')$ does as well. On the other hand, if $N/(e-e')\in \cC(\graf, i)$, then $P/(e-e')\in \cC(\graf,i)$; therefore, since $\mathrm{Tor}(P)$ injects into $P/(e-e')$, it follows that $M/(e-e')\in \cC(\graf,i)$, as desired.
\end{proof}

\begin{corollary}\label{cor:tame torsion}
Suppose that $M\in \cC_R(\graf_v, i)$ is $(e-e')$-torsion. With the induced $R[E(\graf)]$-module structure, $M\in \cC_R(\graf,i)$.
\end{corollary}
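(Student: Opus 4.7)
The plan is to exploit the fact that being $(e-e')$-torsion is exactly the condition under which the hypothesis and conclusion of Lemma \ref{lem:tame quotient} coincide, so the corollary is essentially a tautology given that lemma.

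First, I would unpack the hypothesis. The statement "$M$ is $(e-e')$-torsion" means $(e-e')\cdot M=0$ (consistent with the usage of $\mathrm{Tor}$ in the proof of Lemma \ref{lem:tame quotient}). Consequently, the $R[E(\graf_v)]$-action on $M$ factors through the quotient ring $R[E(\graf_v)]/(e-e')\cong R[E(\graf)]$, and this factored action is by definition the induced $R[E(\graf)]$-module structure on $M$.

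Second, I would observe that the canonical surjection $M\twoheadrightarrow M/(e-e')$ is an isomorphism under this hypothesis, and moreover it identifies $M$ (with its induced $R[E(\graf)]$-structure) with $M/(e-e')$ (with its natural $R[E(\graf)]$-structure arising from Lemma \ref{lem:tame quotient}). Applying Lemma \ref{lem:tame quotient} to the hypothesis $M\in\cC_R(\graf_v,i)$ yields $M/(e-e')\in \cC_R(\graf,i)$, and transporting across the isomorphism gives $M\in\cC_R(\graf,i)$, as required.

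There is no real obstacle: the content is entirely carried by Lemma \ref{lem:tame quotient}, and the corollary merely records the fact that for $(e-e')$-torsion modules, quotienting by $(e-e')$ does nothing. The only point requiring mild care is the interpretation of ``torsion''; should one wish to admit modules annihilated by a power $(e-e')^n$ rather than by $(e-e')$ itself, one filters $M$ by $(e-e')^j M$ and applies the above argument to each associated graded piece, concluding via the fact that $\cC_R(\graf,i)$ is closed under extensions.
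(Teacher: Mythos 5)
Your proof is correct and matches the paper's intent exactly: the paper states this as a corollary immediately after Lemma~\ref{lem:tame quotient} without further argument, and the content is precisely that $(e-e')M=0$ makes the natural surjection $M\to M/(e-e')$ an isomorphism of $R[E(\graf)]$-modules (the source carrying the induced structure via $R[E(\graf_v)]/(e-e')\cong R[E(\graf)]$), so Lemma~\ref{lem:tame quotient} applies. Your concluding remark on the power-of-$(e-e')$ interpretation is a harmless extra; in the paper's convention, $\mathrm{Tor}$ denotes the kernel of $(e-e')$ itself, so your first reading is the intended one.
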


The main technical input is the following result, whose proof we defer to Section \ref{section:tame tori proof}.

\begin{lemma}\label{lem:tame tori}
The intersection $T_{i-1}(\graf_v)\cap\ker(e-e') $ is $i$-tame with respect to $\graf$.
\end{lemma}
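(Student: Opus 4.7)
The plan is to decompose $T_{i-1}(\graf_v)$ into a manifestly tame piece and a piece built from the free submodules $A_W(\graf_v)$ of Construction \ref{construction:good tori}, then to control how each intersects $\ker(e-e')$. Writing $K := T_{i-1}(\graf_v)\cap\ker(e-e')$, my target is $K\in\cC_R(\graf,i)$.

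First I would introduce $T^t\subseteq T_{i-1}(\graf_v)$, the submodule generated by all non-rigid $W$-tori together with all $W$-tori for non-well-separating $W$ of cardinality $i-1$. By Lemma \ref{lem:non-rigid tame} and the convention that $\pi_0((\graf_v)_W)$ is tame for non-well-separating $W$, one has $T^t\in\cC_R(\graf_v,i-1)\subseteq\cC_R(\graf_v,i)$, so $T^t\cap\ker(e-e')$ is an $(e-e')$-torsion submodule of an $i$-tame $R[E(\graf_v)]$-module, and Corollary \ref{cor:tame torsion} yields $i$-tameness with respect to $\graf$.

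Next I would treat the rigid well-separating contribution. For each well-separating $W$ with $|W|=i-1$, Lemma \ref{lem:good tori free module} identifies $A_W(\graf_v)$ with $R[\pi_0((\graf_v)_W)]\langle A_W\rangle$; the element $e-e'$ acts trivially on this precisely when $e$ and $e'$ lie in a common component of $(\graf_v)_W$, in which case $A_W(\graf_v)\cap\ker(e-e')=R[\pi_0(\graf_W)]\langle A_W\rangle$, which is $i$-tame with respect to $\graf$ by Lemma \ref{lem:add a vertex}; otherwise $e-e'$ is a non-zero-divisor on an integral domain and the intersection is zero. To extend from $A_W(\graf_v)$ to the full submodule $T^r_W(\graf_v)$ generated by all rigid $W$-tori, I would combine the unstable $X$-relation (Lemma \ref{lem:unstable X}) with the combined $X$-relation (Corollary \ref{cor:combined X}) to show that $(e_w - e_w')\cdot\alpha^w$ lies in the $A_w$-span for every star class $\alpha^w$ at $w$, then iterate over $w\in W$ to conclude that $\prod_{w\in W}(e_w - e_w')\cdot T^r_W(\graf_v)\subseteq A_W(\graf_v)$, placing $T^r_W(\graf_v)/A_W(\graf_v)$ in $\cC_R(\graf_v,i)$ by Lemmas \ref{lem:inequality tame} and \ref{lem:add a vertex}.

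The main obstacle will be assembling these pieces into a bound on the full intersection $K$: cancellations among tori at different $W$'s could in principle create elements of $K$ outside the obvious sum $T^t + \sum_W A_W(\graf_v)$. I would handle this with a snake-lemma argument applied to $0\to T^t\to T_{i-1}(\graf_v)\to T_{i-1}(\graf_v)/T^t\to 0$ under multiplication by $e-e'$, which presents $K$ as an extension of $T^t\cap\ker(e-e')$ by a subobject of $(T_{i-1}(\graf_v)/T^t)\cap\ker(e-e')$; the latter is controlled by the $W$-indexed analysis of $A_W(\graf_v)\cap\ker(e-e')$ together with the tame quotient $T^r_W/A_W$. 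The delicate aspect throughout is to keep all reductions within $\cC_R(\graf_v,i)$, so that the final passage from $\graf_v$ to $\graf$ can be effected via Corollary \ref{cor:tame torsion} without losing module-theoretic control.
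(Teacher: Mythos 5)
Your decomposition into a ``manifestly tame'' piece $T^t$ and the per-$W$ pieces $A_W\subseteq T^r_W$, and your per-piece analyses, are broadly sound and run roughly parallel to the paper's. Step~1 is correct as stated. Step~2 is correct and in fact slightly cleaner than the paper's treatment of the corresponding case (the paper invokes Lemma~\ref{lem:tame no separation} and the Q-relation for arbitrary tori; because you have already restricted to $A_W$, Lemma~\ref{lem:good tori free module} and Lemma~\ref{lem:add a vertex} suffice). In Step~3, the conclusion that $\prod_{w\in W}(e_w-e_w')\cdot T^r_W(\graf_v)\subseteq A_W(\graf_v)$ places $T^r_W/A_W$ in $\cC_R(\graf_v,i)$ is too quick: annihilation by a single ring element does not by itself produce an $i$-tame partition. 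One needs the finer filtration $M_0\subseteq\cdots\subseteq M_{|W|}$ of Lemma~\ref{lem:tame pieces}, which isolates a single $(e_{w_0,1}-e_{w_0,2})$-annihilation on each graded piece and then identifies the resulting partition; or you can simply invoke Lemma~\ref{lem:quotient by good tori}, which records exactly the statement you need.

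The genuine gap is in the assembly. Your snake lemma on $0\to T^t\to T_{i-1}(\graf_v)\to T_{i-1}(\graf_v)/T^t\to 0$ reduces the problem to controlling $\bigl(T_{i-1}(\graf_v)/T^t\bigr)\cap\ker(e-e')$, and you assert this is ``controlled by the $W$-indexed analysis.'' But $T_{i-1}(\graf_v)/T^t$ is only a \emph{quotient} of $\bigoplus_W T^r_W(\graf_v)$, and its $(e-e')$-torsion is not a quotient or sum of the $(e-e')$-torsion of the summands: a second snake lemma introduces the term $K_1/(e-e')$ where $K_1=\ker\bigl(\bigoplus_W T^r_W(\graf_v)\to T_{i-1}(\graf_v)/T^t\bigr)$ records the cross-$W$ relations among tori. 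Nothing in your Steps~1--3 bounds this kernel. The paper resolves exactly this point with Corollary~\ref{cor:direct sum}, whose proof rests on a key structural fact you do not invoke: a $W$-torus lies in filtration degree $|W\cap W'|<|W'|$ of the $W'$-filtration whenever $W\neq W'$, so the map $\bigoplus_W T_W\to\bigoplus_W E^\infty_{|W|,0}(W)$ is block-diagonal, and Proposition~\ref{prop:tame graded kernel} then controls its kernel summand by summand. Without this spectral-sequence input (or some substitute), the ``cancellations among tori at different $W$'s'' that you correctly flag as the main obstacle remain uncontrolled, and the argument does not close.
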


Taking this result for granted, we complete the argument.

\begin{proof}[Proof of Theorem \ref{thm:tame}]
The quotient vanishes when $i=0$, so assume $i>0$. We proceed by induction on $b_1(\graf)$, the base case of a tree being trivial, since the quotient again vanishes. For the induction step, we choose an edge $e$ as above and subdivide with the addition of the bivalent vertex $v$. Since $\graf$ is a connected graph with $b_1(\graf)>0$, we may choose $e$ so that $\graf_v$ is also connected, and an Euler characteristic calculation shows that $b_1(\graf_v)=b_1(\graf)-1$.

Suppose that the claim holds for $\graf_v$ in every degree, and consider the short exact sequence \[0\to \im(\iota)\to H_i(B(\graf))\to \im(\psi)\to 0\] arising from Proposition \ref{prop:vertex explosion}.

We claim first that $\im(\psi)\in \cC_R(\graf,i)$. The induction hypothesis implies that $H_{i-1}(B(\graf_v))/T_{i-1}(\graf_v)\in \cC_R(\graf_v,i-1)$. Since this category is closed under subobjects, the third entry in the exact sequence \[0\to T_{i-1}(\graf_v)\cap \im(\psi)\to  \im(\psi)\to \im(\psi)/T_{i-1}(\graf_v)\cap \im(\psi)\to0\] is $(i-1)$-tame with respect to $\graf_v$, hence $(i-1)$-tame with respect to $\graf$ by Corollary \ref{cor:tame torsion}---we use that $\im(\psi)=\ker(e-e')$ by Proposition \ref{prop:vertex explosion}. Lemmas \ref{lem:tame move up} and \ref{lem:tame tori} now show that the first and third entries are $i$-tame with respect to $\graf$, and $\cC_R(\graf,i)$ is closed under extensions.

It now suffices to show that $\im(\iota)/T_i(\graf)\in \cC_R(\graf,i)$. Write $Q=H_i(B(\graf_v))/T_i(\graf_v)$, and consider the commuting diagram
\[
\begin{tikzcd}
&T_i(\graf_v)/(e-e')\ar{d}\ar{r}&H_i(B(\graf_v))/(e-e')\ar{d}\ar{r}&Q/(e-e')\ar[equal]{d}\ar{r}&0\\
0\ar{r}&T_i(\graf)\ar{r}{\subseteq}&\im(\iota)\ar[dashed]{r}&Q/(e-e')\ar{r}&0,
\end{tikzcd}\] where the vertical arrows are induced by $\iota$. By Proposition \ref{prop:vertex explosion}, the middle arrow is an isomorphism, so the dashed arrow is determined. The top sequence is right exact, since it is obtained by tensoring a short exact sequence down to $R[E(\graf)]$. Since the lefthand vertical arrow is surjective, it follows that the bottom sequence is right exact, hence short exact. Since $Q/(e-e')\in \cC_R(\graf,i)$ by induction and Lemma \ref{lem:tame quotient}, the claim follows.
\end{proof}

\section{Tameness and tori}

\subsection{Proof of Lemma \ref{lem:tame tori}}\label{section:tame tori proof} The main input is the following calculation.

\begin{proposition}\label{prop:tame graded kernel}
For any connected graph $\graf$ and well-separating proper subset $W$ of essential vertices, the kernel of the map $T_W(\graf)\to E^\infty_{|W|,0}$ is $(|W|+1)$-tame with respect to $\graf$. If $R$ is a field, then this kernel is also $(\Delta^{W}_\graf-1)$-small.
\end{proposition}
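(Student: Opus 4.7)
The plan is to embed the kernel into the quotient $T_W(\graf)/A_W(\graf)$ and then control this quotient directly. By Corollary \ref{cor:inject into associated graded}, $A_W(\graf)$ embeds in $E^\infty_{|W|,0}$, so the submodule $A_W(\graf)$ meets the kernel trivially, and the kernel embeds in $T_W(\graf)/A_W(\graf)$. Since $\cC_R(\graf, |W|+1)$ is closed under subobjects and, when $R$ is a field, so is the class of $(\Delta^W_\graf - 1)$-small modules, it suffices to prove both claims for the quotient.

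The quotient is generated as an $R[E]$-module by images of $W$-tori not in $A_W$, and I would handle two cases. For a non-rigid $W$-torus $\alpha$, the $\thetagraph{}$-relation gives (as in the proof of Lemma \ref{lem:non-rigid tame}) that $R[E] \cdot \alpha$ is a quotient of $R[\pi_0(\graf_{W \setminus \{w\}})]$ for some $w \in W$, which is $|W|$-tame by Lemma \ref{lem:add a vertex} and hence $(|W|+1)$-tame by Lemma \ref{lem:tame move up}; since $W$ is well-separating at $w$, unexploding $w$ merges at least two components of $\graf_W$, so $|\pi_0(\graf_{W \setminus \{w\}})| \leq \Delta^W_\graf - 1$, yielding $(\Delta^W_\graf - 1)$-smallness.

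For a rigid non-good torus $\alpha = \bigotimes_w \alpha^w$ with some non-good factor $\alpha^{w_0}$, the key device is Corollary \ref{cor:combined X}. First I would reduce, using the unstable $X$-relation of Lemma \ref{lem:unstable X}, to the subcase in which $\alpha^{w_0}$ involves at least one of the distinguished edges $e_{w_0}$, $e_{w_0}'$ of Construction \ref{construction:good tori}; Corollary \ref{cor:combined X} then identifies $(e_{w_0}'-e_{w_0})\,\alpha^{w_0}$ with an $R[E]$-linear combination of good star classes at $w_0$. Tensoring with the other factors produces an identity
\[
(e_{w_0}' - e_{w_0})\,\alpha = \sum_i p_i(E)\,\gamma_i,
\]
where each $\gamma_i$ has a good $w_0$-factor, hence strictly fewer non-good factors than $\alpha$. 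Arguing by induction on the number of non-good factors, I would deduce that $R[E] \cdot [\alpha]$ is an extension of a submodule of $\sum R[E] \cdot [\gamma_i]$ (to which the inductive hypothesis applies) by a quotient of $R[P]$, where $P$ is the partition obtained from $\pi_0(\graf_W)$ by merging the blocks of $e_{w_0}$ and $e_{w_0}'$ (using Observation \ref{obs:torus quotient} that the $R[E]$-action factors through $R[\pi_0(\graf_W)]$). Smallness is then immediate from $|P| = \Delta^W_\graf - 1$ together with Lemma \ref{lem:basic growth}, while tameness reduces to showing that $P$ is $(|W|+1)$-tame.

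The main obstacle is the tameness of $P$. Taking $W$ itself as witness, $P \leq \pi_0(\graf_W)$ is strictly coarser, and condition (2) of Definition \ref{def:tame} holds automatically whenever the newly merged block contains at most one tail of $\graf$. The delicate situation is when both $e_{w_0}$ and $e_{w_0}'$ are tails; this is precisely where I would spend the extra allowance and pass to $W' = W \cup \{v_0\}$ for a judiciously chosen essential vertex $v_0$---for example, one whose inclusion destroys well-separation of $W'$, trivializing the two conditions in Definition \ref{def:tame}---thereby justifying the bound $|W|+1$ appearing in the statement.
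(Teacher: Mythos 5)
The broad strategy here matches the paper exactly: embed the kernel into $T_W(\graf)/A_W(\graf)$ via Corollary \ref{cor:inject into associated graded}, dispatch non-rigid tori via Lemma \ref{lem:non-rigid tame}, and control the remaining (rigid, non-good) tori by combining Lemma \ref{lem:unstable X} with Corollary \ref{cor:combined X}, inducting on the number of ``bad'' star factors to reduce to estimating $R[P]$ for the partition $P$ that merges the blocks of $e_{w_0}$ and $e_{w_0}'$. The smallness claim $|P| = \Delta^W_\graf - 1$ is handled correctly. However, the tameness bookkeeping at the end is where the proposal has a genuine gap.

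First, the diagnosis of the ``delicate situation'' is wrong. Condition (2) of Definition \ref{def:tame} is about all tails in a block of $P$, not about the representatives $e_{w_0}, e_{w_0}'$. Even if neither $e_{w_0}$ nor $e_{w_0}'$ is a tail, the merge $P < \pi_0(\graf_W)$ can violate condition (2): the $\pi_0(\graf_W)$-block of $e_{w_0}$ is an entire component of $\graf_W$ and can contain a tail even though $e_{w_0}$ itself is not one, and likewise for $e_{w_0}'$. So ``the newly merged block contains at most one tail'' is not implied by ``not both of $e_{w_0}, e_{w_0}'$ are tails.'' Second, the proposed remedy---passing to $W' = W\cup\{v_0\}$ for a $v_0$ whose inclusion destroys well-separation---is not shown to be available. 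There is no reason a well-separating $W$ admits such a $v_0$, and the proposal gives no construction.

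What the paper does instead, and what the proposal misses, is to make a careful choice of the distinguished edges \emph{before} Lemma \ref{lem:quotient by good tori}: at each $w \in W$, the two privileged edges can be chosen in distinct components of $\graf_W$ with at least one not a tail, and this is always possible unless $\graf$ has a single essential vertex and $|W|=1$, in which case $H_1(B(\graf)) = E^\infty_{1,0}$ and the proposition is vacuous. With this choice, say $e_1$ is not a tail with other endpoint an essential vertex $u \neq w_0$. If $u \in W$, then $e_1$'s block in $\pi_0(\graf_W)$ is the singleton $\{e_1\}$, which contains no tail, so the merge preserves condition (2) and $P$ is $|W|$-tame outright. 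If $u \notin W$, one does spend the extra allowance, but in the concrete form of $W' = W\cup\{u\}$: Lemma \ref{lem:add a vertex} then gives $R[\pi_0(\graf_W)] \in \cC_R(\graf, |W|+1)$, and $R[P]$ follows by Lemma \ref{lem:inequality tame}. This case split on the location of $u$ (rather than on tails of $e_{w_0}, e_{w_0}'$) is the missing ingredient; without it the tameness claim, hence the first half of the proposition, is unproved.
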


This result also has the following useful consequence.

\begin{corollary}\label{cor:direct sum}
The kernel and cokernel of the natural map \[\bigoplus_WT_W(\graf)\to T_i(\graf)\] are $(i+1)$-tame and $i$-tame, respectively, where $W$ ranges over well-separating sets of essential vertices of cardinality $i$. If $R$ is a field and $\Delta^i_\graf>1$, then both kernel and cokernel are $(\Delta^{i}_\graf-1)$-small.
\end{corollary}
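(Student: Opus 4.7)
The plan is to treat the cokernel and kernel separately, using Proposition~\ref{prop:tame graded kernel} as the main input for the kernel.

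For the cokernel, I would observe that by definition $T_i(\graf)$ is the $R[E]$-span of $T_W(\graf)$ over \emph{all} essential vertex sets $W$ of cardinality $i$, whereas the image of the direct-sum map consists only of contributions from well-separating $W$. Hence the cokernel is generated by the $T_W(\graf)$ for non-well-separating $W$ with $|W|=i$. For any $W$-torus $\alpha$ at such a $W$, Lemma~\ref{lem:non-rigid tame} (whose proof explicitly handles the non-well-separating case via the automatic $|W|$-tameness of $\pi_0(\graf_W)$) shows $R[E]\cdot\alpha\in\cC_R(\graf,i)$, so the cokernel lies in the Serre subcategory $\cC_R(\graf,i)$.

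For the kernel, enumerate the well-separating subsets of cardinality $i$ as $W_1,\ldots,W_n$, and for each $j$ consider the filtration on $\fullyreduced{\graf}$ induced by $W_j$. The key geometric observation is that for $k\neq j$, the standard chain representative of any $W_k$-torus is supported on half-edge generators at the vertices of $W_k$, so in the $W_j$-filtration it sits in filtration degree $|W_k\cap W_j|\leq i-1$; edge multiplication preserves filtration degree. Therefore, given an element $(x_1,\ldots,x_n)$ in the kernel $K$ of $\bigoplus_j T_{W_j}(\graf)\to T_i(\graf)$, the images of the $x_k$ for $k\neq j$ vanish in $E^\infty_{i,0}(W_j)$, and the relation $\sum_k x_k=0$ forces the image of $x_j$ to vanish as well, placing $x_j$ in $K_j:=\ker\bigl(T_{W_j}(\graf)\to E^\infty_{i,0}(W_j)\bigr)$. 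Proposition~\ref{prop:tame graded kernel} says each $K_j$ is $(i+1)$-tame, and the resulting inclusion $K\subseteq\bigoplus_j K_j$ exhibits $K$ as a submodule of a finite direct sum of $(i+1)$-tame modules, hence as $(i+1)$-tame.

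The smallness claims follow quickly. Under the additional hypothesis that $R$ is a field and $\Delta^i_\graf>1$, Proposition~\ref{prop:tame growth} gives the cokernel as $(\Delta^i_\graf-1)$-small. On the kernel side, the smallness clause of Proposition~\ref{prop:tame graded kernel} makes each $K_j$ even $(\Delta^{W_j}_\graf-1)$-small, hence $(\Delta^i_\graf-1)$-small since $\Delta^{W_j}_\graf\leq\Delta^i_\graf$; finite direct sums and submodules both preserve smallness over a field, so $K$ inherits it. The main obstacle is the filtration-degree observation in the kernel argument: it is what decouples the various $W_j$ and permits Proposition~\ref{prop:tame graded kernel} to be applied one vertex set at a time, avoiding any need for a more intricate joint analysis of the $n$ distinct spectral sequences.
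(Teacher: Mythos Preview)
Your proof is correct and follows essentially the paper's approach. The only difference is that you make explicit the filtration-degree observation (that a $W_k$-torus lands in filtration degree at most $i-1$ for the $W_j$-filtration when $k\neq j$), whereas the paper simply asserts that the kernel embeds in the kernel of the composite $\bigoplus_W T_W(\graf)\to T_i(\graf)\to\bigoplus_W E^\infty_{|W|,0}(W)$ and invokes Proposition~\ref{prop:tame graded kernel}, leaving that diagonality check to the reader.
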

\begin{proof}
If $\Delta^i_\graf=1$, then $i=1$, and the source of the map in question vanishes. Thus, the claim regarding the kernel is vacuous in this case, and the claim regarding the cokernel follows from Lemma \ref{lem:non-rigid tame}, since every star class in $\graf$ is non-rigid.

Assume that $\Delta^i_\graf>1$, so that a well-separating set $W$ exists. The kernel in question is a submodule of the kernel of the composite \[\bigoplus_WT_W(\graf)\to T_i(\graf)\to \bigoplus_W E^\infty_{|W|,0}(W),\] which is $(i+1)$-tame and $(\Delta^i_\graf-1)$-small by Proposition \ref{prop:tame graded kernel}. The claim regarding the cokernel follows from Lemma \ref{lem:non-rigid tame}, since the cokernel is generated by the images of tori supported at non-well-separating vertex sets, which are in particular non-rigid.
\end{proof}

Before turning to the proof of Proposition \ref{prop:tame graded kernel}, we make a few first reductions and establish notation.

At each vertex $w\in W$, choose two half-edges in distinct components of $\graf_W$. We may make this choice so that the edges associated to each pair of half-edges are not both tails; indeed, if $|W|>1$, this claim follows from the assumption that $\graf$ is connected, while assuming otherwise in the case $|W|=1$ leads to the conclusion that $\graf$ has only one essential vertex, in which case $H_1(B(\graf))=E^\infty_{1,0}$, and the conclusion of Proposition \ref{prop:tame graded kernel} is vacuous. 
Let $A_W\subseteq T_W(\graf)$ denote the set of $W$-tori corresponding to these choices, as exhibited in Construction \ref{construction:good tori}.

\begin{lemma}\label{lem:quotient by good tori}
For any connected graph $\graf$ and well-separating set $W$ of essential vertices, $T_W(\graf)/A_W(\graf)$ is $p$-tame with respect to $\graf$, where \[
p=\begin{cases}
|W|+1&\quad W \text{ proper}\\
|W|&\quad \text{otherwise}.
\end{cases}\] Regardless, if $R$ is a field, then $T_W(\graf)/A_W(\graf)$ is $(\Delta^{W}_\graf-1)$-small.
\end{lemma}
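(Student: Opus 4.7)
The plan is to reduce the analysis of $T_W(\graf)/A_W(\graf)$ to that of a quotient of $R[\pi_0(\graf_W)]$ by a specific product of linear forms, and then to apply Lemma~\ref{lem:add a vertex} in the proper case or a filtration argument in the non-proper case.

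First I would establish the key algebraic input: setting $L := \prod_{w \in W}(e_2^w - e_1^w) \in R[E]$, we have $L \cdot T_W(\graf) \subseteq A_W(\graf)$. At each $w \in W$, for any star class $\alpha^w$ at $w$, the product $(e_2^w - e_1^w)\alpha^w$ lies in the $R[E]$-span of $A_w$: for $\alpha^w = \alpha^w_{1ij}$ this is immediate from Corollary~\ref{cor:combined X}; for $\alpha^w = \alpha^w_{2ij}$ a symmetric argument (swapping the roles of $h_1^w$ and $h_2^w$) gives the same conclusion; and for $\alpha^w = \alpha^w_{ijk}$ with no index equal to $1$ or $2$, one first applies the unstable $X$-relation (Lemma~\ref{lem:unstable X}) to reduce to the previous cases. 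Iterating over $w$ via external products, using the commutativity of $R[E]$, yields the claim. Combined with Observation~\ref{obs:torus quotient}, which says that every $W$-torus generates a cyclic $R[\pi_0(\graf_W)]$-submodule, this exhibits $T_W(\graf)/A_W(\graf)$ as a finite quotient of a finite direct sum of copies of $M := R[\pi_0(\graf_W)]/(L)$.

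Smallness follows immediately: multiplication by the nonzero element $L$ is injective in the integral domain $R[\pi_0(\graf_W)]$ (a polynomial ring on $\Delta^W_\graf$ generators), so a Hilbert-series calculation shows $M$ has polynomial growth of degree $\Delta^W_\graf - 2$, hence is $(\Delta^W_\graf - 1)$-small. The tameness assertion in the proper case is equally immediate: Lemma~\ref{lem:add a vertex} places $R[\pi_0(\graf_W)]$ in the Serre subcategory $\cC_R(\graf, |W|+1)$, which is closed under quotients and direct sums.

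The main obstacle is tameness in the non-proper case, where the sharper $|W|$-tame bound is required. Here I would exploit integrality of $R[\pi_0(\graf_W)]$ to filter $M$ by the submodules generated by the images of the partial products $L_1, L_1 L_2, \ldots$, with successive quotients isomorphic (up to graded shift) to $R[\pi_0(\graf_W)]/(L_w) \cong R[P_w]$, where $P_w$ is the partition obtained from $\pi_0(\graf_W)$ by merging the two blocks containing $e_1^w$ and $e_2^w$. To show each $P_w$ is $|W|$-tame with witness $W$, the only delicate check is the tail condition, which amounts to these two merged blocks not both containing tails of $\graf$; I would arrange this by refining the preamble's choice so that $e_1^w$ always lies in a tail-free component of $\graf_W$---a choice available at each $w$ whenever $|\essential| \geq 2$, since then some edge at $w$ lies on a path to another essential vertex, whose component in $\graf_W$ contains no tail. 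The residual case $|\essential| = 1$ reduces to a star graph, where the preamble's observation that $H_1(B(\graf)) = E^\infty_{1,0}$ permits a separate, direct treatment.
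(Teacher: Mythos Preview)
Your proof is correct and takes a genuinely different route from the paper's, though both rest on Corollary~\ref{cor:combined X}.

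The paper filters $T_W(\graf)/A_W(\graf)$ directly: writing each $W$-torus as $\alpha_{IJK}$ with $i_w\equiv 1$, it stratifies by the number of indices $j_w\neq 2$, and then uses a single application of Corollary~\ref{cor:combined X} at one offending vertex $w_0$ to show that each cyclic piece of the associated graded is annihilated by $e_2^{w_0}-e_1^{w_0}$ and hence covered by $R[P]$ for a one-merge partition $P$. Your route instead establishes the global annihilation $L\cdot T_W(\graf)\subseteq A_W(\graf)$ with $L=\prod_w(e_2^w-e_1^w)$, reducing the entire quotient to a quotient of finitely many shifted copies of the single explicit module $M=R[\pi_0(\graf_W)]/(L)$, and only then filters $M$ itself by partial products. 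The end point is the same---subquotients of the form $R[P_w]$ for a one-merge partition---but your packaging isolates the algebra from the torus combinatorics. A small bonus: the global statement $L\cdot T_W(\graf)\subseteq A_W(\graf)$ is a clean structural fact the paper does not record. Your handling of the tail condition in the non-proper case (choosing $e_1^w$ in a tail-free \emph{component} of $\graf_W$, rather than merely requiring $e_1^w$ itself not be a tail) is in fact slightly more careful than the paper's phrasing; note that the paper's assertion ``its other endpoint $u\neq w_0$ is essential'' is only immediate after smoothing bivalent vertices, whereas your formulation avoids this wrinkle. Both approaches defer the single-essential-vertex case to the preamble's observation that the conclusion is then either vacuous or directly accessible.
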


There is reason to find this result surprising. For example, if $\graf$ is a tree and $W$ contains an essential vertex of high valence, then there are many rigid $W$-tori not in $A$, and the naive expectation is that the images of such tori in $T_W(\graf)/A_W(\graf)$ should exhibit the fastest possible growth. As the proof will show, it is the relation of Corollary \ref{cor:combined X} that dampens this growth.

\begin{remark}
Given the smallness estimate of Lemma \ref{lem:quotient by good tori}, it is natural to wonder whether $T_W(\graf)/A_W(\graf)$ is always $|W|$-tame. We do not know the answer to this question.
\end{remark}

\begin{proof}[Proof of Proposition \ref{prop:tame graded kernel}] By Lemma \ref{lem:good tori free module} and Corollary \ref{cor:inject into associated graded}, the composite map \[A_W(\graf)\subseteq  T_W(\graf)\to E^\infty_{|W|,0}\] is injective, so the kernel in question is isomorphic to a submodule of $T_W(\graf)/A_W(\graf)$, and the claim follows from Lemma \ref{lem:quotient by good tori}. 
\end{proof}

Order the half-edges at each $w\in W$ subject to the requirement that, for each $w$, the first two half-edges in the ordering are the two privileged in the construction of $A_W(\graf)$. Given tuples $I=(i_w)_{w\in W}$, $J=(j_w)_{w\in W}$, and $K=(k_w)_{w\in W}$ such that $1\leq i_w,j_w,k_w\leq d(w)$ and $i_w<j_w<k_w$, we have the $W$-torus $\alpha_{IJK}=\bigotimes_{w\in W} \alpha_{i_wj_wk_w}$, and every $W$-torus is uniquely of the form $\pm \alpha_{IJK}$ for some such choice. 

There are two important observations to be made about these indices. First, Lemma \ref{lem:unstable X} implies that every $W$-torus is a linear combination of $W$-tori $\alpha_{IJK}$ satisfying $i_w\equiv 1$. Second, if $i_w\equiv 1$, then $\alpha_{IJK}\in A_W$ if and only if $j_w\equiv 2$. We introduce a filtration \[A_W(\graf)\subseteq M_0\subseteq M_1\subseteq \cdots\subseteq M_{|W|}\subseteq T_W(\graf)\] by declaring $M_r$ to be generated over $R[E]$ by those $W$-tori $\alpha_{IJK}$ such that $i_w\equiv 1$ and $\#\{w: j_w\neq 2\}\leq r$. Rephrasing the observations above, we have $M_{|W|}=T_W(\graf)$ and $M_0=A_W(\graf)$. 
\begin{lemma}\label{lem:tame pieces}
The quotient $M_r/M_{r-1}$ is $p$-tame for every $1\leq r\leq |W|$, where $p$ is as in Lemma~\ref{lem:quotient by good tori}. If $R$ is a field, the quotient is $(\Delta^{W}_\graf-1)$-small.
\end{lemma}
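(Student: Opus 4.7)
The plan is to realize $M_r/M_{r-1}$ as a quotient of a finite direct sum of modules $R[P_S]$ for certain partitions $P_S$ of $E$ associated to size-$r$ subsets $S$ of $W$, and then to show that each such $R[P_S]$ is $p$-tame and $(\Delta^W_\graf - 1)$-small. The conclusion will follow from the closure properties of Serre subcategories and of the class of $n$-small modules under direct sums and quotients.

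For each $S \subseteq W$ with $|S| = r$, let $P_S$ denote the partition of $E$ obtained from $\pi_0(\graf_W)$ by merging, for each $w \in S$, the two blocks containing the privileged edges $e_1^w$ and $e_2^w$. After applying Lemma \ref{lem:unstable X} to reduce to $i_w \equiv 1$, every generator of $M_r/M_{r-1}$ has the form $\alpha_{S, J, K} = \bigotimes_{w} \alpha_{1, j_w, k_w}$ with $j_w = 2$ for $w \notin S$ and $2 < j_w < k_w$ for $w \in S$. Two relations govern the cyclic submodule generated by such a class: by Observation \ref{obs:torus quotient}, the $R[E]$-action factors through $R[\pi_0(\graf_W)]$; and by Corollary \ref{cor:combined X} applied at each $w \in S$ with the half-edges $\{h_1^w, h_2^w, h_{j_w}^w, h_{k_w}^w\}$, one finds $(e_2^w - e_1^w)\alpha_{S, J, K} \in M_{r-1}$. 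Together these yield a surjection
\[
\bigoplus_{S,\, (J, K)} R[P_S] \twoheadrightarrow M_r/M_{r-1},
\]
where the sum is over size-$r$ subsets $S \subseteq W$ and finitely many index tuples $(J, K)$.

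Smallness is immediate: since $e_1^w$ and $e_2^w$ lie in distinct blocks of $\pi_0(\graf_W)$ by construction, each of the $|S| \geq 1$ merges strictly reduces the block count, so $|P_S| \leq \Delta^W_\graf - 1$, and Lemma \ref{lem:basic growth} gives $(\Delta^W_\graf - 1)$-smallness of $R[P_S]$. For tameness, $R[P_S]$ is a quotient of $R[\pi_0(\graf_W)]$ (since $P_S \leq \pi_0(\graf_W)$), so it suffices to show $R[\pi_0(\graf_W)] \in \cC_R(\graf, p)$. When $W$ is a proper subset of the essential vertices this is precisely Lemma \ref{lem:add a vertex}. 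In the complementary case, $W$ is the set of all essential vertices, and the reduction in the proof of Proposition \ref{prop:tame graded kernel} ensures $|W| \geq 2$; here one must either exhibit an alternative witness for $\pi_0(\graf_W)$ or decompose the quotient according to the rigidity of the generating tori.

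The principal obstacle is this last case. The natural witness $W$ itself can fail the tail condition (2) in the definition of tameness: if the privileged edges at different $w \in S$ share a common tail-free component of $\graf_W$, successive merges may chain together distinct tail-bearing components into a single $P_S$-block containing two $\graf$-tails that are not related in $\pi_0(\graf_W)$. I expect to resolve this by splitting $M_r/M_{r-1}$ according to whether the factor $\alpha_{1, j_w, k_w}$ is non-rigid at some $w \in S$, meaning all three of its half-edges lie in a single component of $\graf_W$; the non-rigid contribution is $|W|$-tame by Lemma \ref{lem:non-rigid tame}, while for the rigid contribution the $\thetagraph{}$-relation supplies additional identifications that, combined with a careful structural analysis of the components of $\graf_W$ meeting each vertex of $W$, should establish the required tameness.
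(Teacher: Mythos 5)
Your proposal takes a more aggressive decomposition than the paper's and, as you yourself note, leaves a genuine gap precisely where the paper's argument does its most delicate work. The paper handles each generator $[\alpha_{IJK}]$ by choosing a \emph{single} $w_0$ with $j_{w_0}\neq 2$ and applying Corollary~\ref{cor:combined X} once, producing a partition $P$ obtained from $\pi_0(\graf_W)$ by a single block merge; you instead merge at all $w\in S$ simultaneously, producing $P_S$. Both give the same smallness bound, so the extra merges buy nothing there; but they make the tameness verification substantially harder, because chains of merges across several $w\in S$ can join blocks containing tails of $\graf$ that were separated in $\pi_0(\graf_W)$, violating condition (2) of Definition~\ref{def:tame}.

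The unresolved case where $W$ is the full set of essential vertices is a genuine gap, and your stated reduction for it is actually false: when $W$ is not proper, $p=|W|$, and $R[\pi_0(\graf_W)]$ is \emph{not} in general $|W|$-tame (the definition requires $P<\pi_0(\graf_W)$ strictly when $W$ is well-separating, and over a field the growth of $R[\pi_0(\graf_W)]$ has degree $\Delta^W_\graf-1$, which exceeds that of any $|W|$-tame module with witness $W$). So ``it suffices to show $R[\pi_0(\graf_W)]\in\cC_R(\graf,p)$'' is not a valid reduction in that case. The paper sidesteps this entirely: having arranged (in the setup preceding Lemma~\ref{lem:quotient by good tori}) that at each $w$ the two privileged edges are not both tails, it observes that when $W$ contains every essential vertex, the non-tail edge $e_1$ must have its other endpoint $u\in W$, so $\{e_1\}$ is a singleton block of $\pi_0(\graf_W)$. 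Merging that singleton non-tail block into the block of $e_2$ cannot newly join two tails, so the resulting $P$ is $|W|$-tame with witness $W$ by inspection. This is exactly the observation your merge-everything-at-once strategy obscures: once you merge at several vertices, the relevant ``added'' edges are no longer individually singletons being absorbed, and you must control interactions between them. Your closing paragraph sketches a rigidity-based workaround but does not carry it out, so the lemma is not proved as written.
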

\begin{proof}Since this quotient is generated by the images of those $\alpha_{IJK}$ such that $i_w= 1$  for $w\in W$ and $\#\{w: j_w\neq 2\}= r$, it suffices to show that \[R[E]\cdot [\alpha_{IJK}]\in \cC_R(\graf,p)\] for such $I$, $J$, and $K$, where $[\alpha_{IJK}]=\alpha_{IJK}+M_{r-1}$. Suppose that $j_{w_0}\neq 2$, and define $J'$ and $K'$ by \[j_w'=\begin{cases}
2&\quad w=w_0\\
j_w&\quad w\neq w_0
\end{cases}\qquad \text{ and }\qquad k_w'=\begin{cases}
j_{w_0}&\quad w=w_0\\
k_w&\quad w\neq w_0.
\end{cases}
\] Corollary \ref{cor:combined X} implies that \[(e_2-e_1)\alpha_{IJK}=(e_{j_{w_0}}-e_1)\alpha_{IJ'K}-(e_{k_{w_0}}-e_1)\alpha_{IJ'K'}\in M_{r-1},\] where each edge shown is incident on $w_0$ with associated half-edge given by its subscript. Thus, we have the extra relation $(e_1-e_2)[\alpha_{IJK}]=0$ in the quotient.

By assumption, $e_1$ and $e_2$ lie in distinct blocks of $\pi_0(\graf_W)$, and we obtain a new partition $P<\pi_0(\graf_W)$ by identifying these blocks. In the diagram of surjections among $R[E]$-modules \[\begin{tikzcd}
R[E]\langle \alpha_{IJK}\rangle\ar{d}\ar{r}& R[E]\cdot\alpha_{IJK}\ar{r}& R[E]\cdot[\alpha_{IJK}]\\
R[\pi_0(\graf_W)]\langle \alpha_{IJK}\rangle\ar{d}\ar[dashed]{ur}\\
R[P]\langle \alpha_{IJK}\rangle.\ar[dashed]{uurr}
\end{tikzcd}
\] the inner dashed filler is supplied by Observation \ref{obs:torus quotient}, and the outer dashed filler is supplied by the extra relation. The smallness claim follows, and it now suffices to show that $R[P]$ is $p$-tame.

Without loss of generality, $e_1$ is not a tail, and its other endpoint $u\neq w_0$ is essential. If $u\in W$, then it follows easily that $P$ is $|W|$-tame, hence $p$-tame. If $u\notin W$, then $\pi_0(\graf_W)$ is $(|W|+1)$-tame by Lemma \ref{lem:add a vertex}, so Lemma \ref{lem:inequality tame} implies the claim.
\end{proof}

\begin{proof}[Proof of Lemma \ref{lem:quotient by good tori}]
The module in question is $M_{|W|}/M_0$, which admits the finite filtration $M_1/M_0\subseteq \cdots\subseteq M_{|W|}/M_0$ with $p$-tame and $(\Delta^{W}_\graf-1)$-small associated graded by Lemma \ref{lem:tame pieces}.
\end{proof}

We return now to the setting of Lemma \ref{lem:tame tori}. To wit, $e$ is an edge of $\graf$, which we subdivide by the introduction of a bivalent vertex $v$, resulting in edges $e,e'\in E(\graf_v)$.

\begin{lemma}\label{lem:tame no separation}
Let $W$ be a proper set of essential vertices and $\alpha$ a $W$-torus in $\graf_v$. If $e$ and $e'$ lie in the same component of $(\graf_v)_W$, then $R[E(\graf_v)]\cdot \alpha$ is $(|W|+1)$-tame with respect to $\graf$.
\end{lemma}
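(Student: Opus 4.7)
The plan is to realize $R[E(\graf_v)]\cdot\alpha$ as a quotient of $R[\pi_0(\graf_W)]$, viewed as an $R[E(\graf)]$-module, at which point Lemma \ref{lem:add a vertex} together with the Serre closure of $\cC_R(\graf,|W|+1)$ under quotients will close the argument.

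First I would verify that $R[E(\graf_v)]\cdot\alpha$ indeed carries a natural $R[E(\graf)]$-module structure. By hypothesis, $e$ and $e'$ lie in the same component of $(\graf_v)_W$, so Observation \ref{obs:torus quotient} gives $(e-e')\alpha=0$. Since $R[E(\graf_v)]$ is commutative, the cyclic submodule $R[E(\graf_v)]\cdot\alpha$ is annihilated by $e-e'$, so its action factors through the surjection $R[E(\graf_v)]\twoheadrightarrow R[E(\graf)]$ whose kernel is $(e-e')$.

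Next I would apply Observation \ref{obs:torus quotient} again to exhibit $R[E(\graf_v)]\cdot\alpha$ as a quotient of $R[\pi_0((\graf_v)_W)]$. Because $v$ is bivalent and not in $W$, the explosion $(\graf_v)_W$ differs from $\graf_W$ only by subdividing the arc $e\cup\{v\}\cup e'$; subdivision does not change connected components, so there is a natural bijection $\pi_0((\graf_v)_W)\xrightarrow{\sim}\pi_0(\graf_W)$ sending the block of each $f\in E(\graf)\subseteq E(\graf_v)$ to the block of $f$ in $\graf_W$. This identifies $R[\pi_0((\graf_v)_W)]$ with $R[\pi_0(\graf_W)]$ as $R[E(\graf)]$-modules (both encode the edge-to-block assignment).

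Finally, since the only new vertex $v$ of $\graf_v$ is bivalent, the essential vertices of $\graf$ and $\graf_v$ coincide, so $W$ is a proper subset of the essential vertices of the connected graph $\graf$. Lemma \ref{lem:add a vertex} then gives $R[\pi_0(\graf_W)]\in\cC_R(\graf,|W|+1)$, and the quotient $R[E(\graf_v)]\cdot\alpha$ inherits membership in this Serre subcategory. The only subtle point is bookkeeping: one must keep the $R[E(\graf_v)]$- and $R[E(\graf)]$-module structures straight and pass through the $(e-e')$-annihilation before invoking Lemma \ref{lem:add a vertex}, which is stated with respect to $\graf$ rather than $\graf_v$.
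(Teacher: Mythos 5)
Your argument is correct but takes a genuinely different and shorter route than the paper's. The paper lifts $\alpha$ to a class $\tilde\alpha\in H_*(B(\graf))$ via the connecting map $\psi$ of Proposition~\ref{prop:vertex explosion} ($\tilde\alpha$ is the external product of $\alpha$ with a loop class through $e$ and $e'$), identifies $R[E(\graf_v)]\cdot\alpha$ as a quotient of $R[E(\graf)]\cdot\tilde\alpha$, and then invokes the Q-relation repeatedly to supply a surjection from $R[\pi_0(\graf_W)]$. You instead stay in $\graf_v$, where $\alpha$ is an honest torus, so Observation~\ref{obs:torus quotient} applies directly and the Q-relation is never needed. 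Both routes end at the same place (a surjection from $R[\pi_0(\graf_W)]$, then Lemma~\ref{lem:add a vertex}); yours is more elementary because you never leave $H_*(B(\graf_v))$. One expository correction, though: $(\graf_v)_W$ is not a subdivision of $\graf_W$, but rather is obtained from a subdivision of $\graf_W$ by \emph{exploding} $v$, which a priori could disconnect a component. The natural bijection $\pi_0((\graf_v)_W)\xrightarrow{\sim}\pi_0(\graf_W)$ you use is correct, but its justification is precisely the hypothesis that $e$ and $e'$ lie in the same component of $(\graf_v)_W$ (so exploding $v$ did not disconnect anything), not the fact that subdivision preserves components. With that phrasing fixed, the module isomorphism $R[\pi_0((\graf_v)_W)]\cong R[\pi_0(\graf_W)]$ over $R[E(\graf)]$ and the rest of the argument go through.
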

\begin{proof} The assumption implies that $\alpha=\psi(\tilde\alpha)$, where $\tilde\alpha$ is the external product of $\alpha$ with the loop formed by $e$, $e'$, and any path joining $e$ and $e'$ in $(\graf_v)_W$ (see Proposition \ref{prop:vertex explosion} for a recollection of the map $\psi$). Therefore, by exactness, we have the isomorphism \[R[E(\graf_v)]\cdot \alpha\cong \frac{R[E(\graf)]\cdot \tilde\alpha}{\im(\iota)\cap R[E(\graf)]\cdot \tilde\alpha}.\] Now, $e$ is incident on the essential vertex $w$, so the Q-relation gives the equation $(e_1-e_0)\tilde\alpha=\alpha'$, where $e_1$ is an edge internal to the loop, $e_0\neq e$ is an edge incident on $w$, and $\alpha'$ is the external product of $\alpha$ and a star class at $w$. Since $\alpha'\in \im(\iota)$, we may apply this observation repeatedly to conclude that the quotient shown above admits a surjection from $R[\pi_0(\graf_W)]$, and the claim follows from Lemma \ref{lem:add a vertex}.
\end{proof}

\begin{proof}[Proof of Lemma \ref{lem:tame tori}]
We claim first that $T_W(\graf_v)\cap \ker(e-e')\in \cC(\graf, i)$, where $W$ is a well-separating subset of $i-1$ essential vertices. If $e$ and $e'$ lie in the same component of $(\graf_v)_W$, the claim follows from Lemma \ref{lem:tame no separation}, so assume otherwise. In this case, the intersection in question is a submodule of the kernel of the map $T_W(\graf_v)\to F^W_{i-1}/F^W_{i-2}$, since the target is $(e-e')$-torsion-free. The claim in this case now follows from Proposition \ref{prop:tame graded kernel}, which asserts that this kernel is $i$-tame with respect to $\graf_v$, hence also with respect to $\graf$ by Lemma \ref{lem:tame quotient}.

Now, summing over well-separating subsets of cardinality $i-1$ and writing $\mathrm{Tor}$ for $(e-e')$-torsion submodules, we have the exact sequences \[\mathrm{Tor}(\ker(f))\to \bigoplus_W\mathrm{Tor}(T_W(\graf_v))\to \mathrm{Tor}\left(\left(\bigoplus_WT_W(\graf_v)\right)/\ker(f)\right)\to \ker(f)/(e-e')\]\[\mathrm{Tor}\left(\left(\bigoplus_WT_W(\graf_v)\right)/\ker(f)\right)\to \mathrm{Tor}(T_{i-1}(\graf_v))\to \mathrm{Tor}(\coker(f)),\] where $f$ is the map of Corollary \ref{cor:direct sum}. It follows from that result that the first and fourth entries of the first sequence are $i$-tame, and we have already shown that the second is so. Therefore, the first entry of the second sequence is $i$-tame. Since the third is so by Corollary \ref{cor:direct sum}, the conclusion follows.
\end{proof}

\subsection{Counting tori} The goal of this section is to calculate the asymptotic dimension of the module $T_i(\graf)$ of tori.

\begin{theorem}\label{thm:counting tori}
Fix a field $\mathbb{F}$ and $i\geq0$. If $\graf$ is a connected graph with an essential vertex and $\Delta^i_\graf>1$, then 
\[\dim T_i(\graf)_k\sim \sum_{W}\frac{1}{(\Delta^i_\graf-1)!}\prod_{w\in W} (d(w)-2)k^{\Delta^i_\graf-1},\] where $W$ ranges over sets of vertices of cardinality $i$ such that $\Delta^W_\graf=\Delta^i_\graf$.
\end{theorem}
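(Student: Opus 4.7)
The plan is to reduce the computation in stages, using the structural results of Section~2.4. First, Corollary~\ref{cor:direct sum} compares $T_i(\graf)$ with the direct sum $\bigoplus_W T_W(\graf)$ indexed by well-separating sets $W$ of essential vertices of cardinality $i$, via a map whose kernel and cokernel are $(\Delta^i_\graf - 1)$-small. Since the sum itself grows like $k^{\Delta^i_\graf-1}$ (guaranteed a priori by Theorem~\ref{thm:polynomial degree} together with the individual lower bounds on each $T_W$), this means
\[\dim T_i(\graf)_k \sim \sum_W \dim T_W(\graf)_k,\]
so it suffices to compute the righthand side.

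For each such $W$, Lemma~\ref{lem:quotient by good tori} says $T_W(\graf)/A_W(\graf)$ is $(\Delta^W_\graf - 1)$-small, and Lemma~\ref{lem:good tori free module} identifies $A_W(\graf) \cong R[\pi_0(\graf_W)]\langle A_W\rangle$ as a free module on $|A_W| = \prod_{w\in W}(d(w)-2)$ generators, each sitting in bidegree $(|W|,2|W|)$. The ring $R[\pi_0(\graf_W)]$ is a polynomial ring in $\Delta^W_\graf$ variables, so
\[
\dim A_W(\graf)_k
= \prod_{w\in W}(d(w)-2)\cdot \binom{k-2|W|+\Delta^W_\graf-1}{\Delta^W_\graf-1}
\sim \frac{\prod_{w\in W}(d(w)-2)}{(\Delta^W_\graf-1)!}\, k^{\Delta^W_\graf-1}.
\]
Combining with the previous step, $\dim T_W(\graf)_k$ has the same leading asymptotics.

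Finally, partition the sum according to the value of $\Delta^W_\graf$. Terms with $\Delta^W_\graf < \Delta^i_\graf$ are $(\Delta^i_\graf - 1)$-small and therefore negligible. Terms with $\Delta^W_\graf = \Delta^i_\graf$ correspond precisely to well-separating sets (by Proposition~\ref{prop:well-separating}, using $\Delta^i_\graf > 1$), and each contributes $\frac{\prod_{w\in W}(d(w)-2)}{(\Delta^i_\graf-1)!}\, k^{\Delta^i_\graf-1}$; summing yields the formula. I do not anticipate a substantive technical obstacle: the heavy lifting has already been done in Lemma~\ref{lem:good tori free module} (identifying the structure of $A_W(\graf)$) and Lemma~\ref{lem:quotient by good tori} (controlling $T_W/A_W$). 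The only bookkeeping point requiring care is to verify that each error term introduced is strictly lower order than $k^{\Delta^i_\graf-1}$, which is immediate from the definition of $n$-smallness together with Proposition~\ref{prop:tame growth}.
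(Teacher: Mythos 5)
Your proof is correct and takes essentially the same route as the paper: Corollary~\ref{cor:direct sum} reduces to summing over well-separating $W$, and for each such $W$ the combination of Lemma~\ref{lem:good tori free module} with Lemma~\ref{lem:quotient by good tori} gives the asymptotic for $\dim T_W(\graf)_k$ (the paper packages this per-$W$ computation as Proposition~\ref{prop:local count}, which you inline). One small wording nit: Proposition~\ref{prop:well-separating} gives only that $\Delta^W_\graf=\Delta^i_\graf>1$ forces $W$ to be well-separating, not a bijection with well-separating sets, but your actual use of it is correct.
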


\begin{proof}[Proof of Theorem \ref{thm:growth}] By Theorems \ref{thm:polynomial degree} and \ref{thm:counting tori}, the conclusion of the theorem is equivalent to the claim that \[\lim_{k\to\infty}\frac{\dim H_i(B_k(\graf))- \dim T_i(\graf)_k}{k^{\Delta^i_\graf-1}}=0.\] This claim is in turn equivalent, by rank-nullity, to the claim that $H_i(B(\graf))/T_i(\graf)$ is $(\Delta^i_\graf-1)$-small, which follows from Proposition \ref{prop:tame growth} and Theorem \ref{thm:tame}.
\end{proof}

The key ingredient in the proof of Theorem \ref{thm:counting tori} is the following local version, which is interesting in its own right.

\begin{proposition}\label{prop:local count}
For any well-separating set $W$ of essential vertices, \[\dim T_W(\graf)_k\sim \frac{1}{(\Delta^W_\graf-1)!}\prod_{w\in W} (d(w)-2)k^{\Delta^W_\graf-1}.\]
\end{proposition}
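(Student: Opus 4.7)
The plan is to pass from $T_W(\graf)$ to its explicit submodule $A_W(\graf)$ of Construction \ref{construction:good tori}, where the structure is controlled enough to permit a direct Hilbert series computation. By Lemma \ref{lem:quotient by good tori}, the quotient $T_W(\graf)/A_W(\graf)$ is $(\Delta^W_\graf - 1)$-small over $\mathbb{F}$, so
\[
\dim T_W(\graf)_k = \dim A_W(\graf)_k + o(k^{\Delta^W_\graf - 1}),
\]
and it therefore suffices to establish the stated asymptotic for $A_W(\graf)$.

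For this, Lemma \ref{lem:good tori free module} exhibits $A_W(\graf)$ as a free module over $\mathbb{F}[\pi_0(\graf_W)]$, a polynomial ring on $\Delta^W_\graf$ variables (one per component of $\graf_W$), with generating set $A_W$ of cardinality $\prod_{w \in W}(d(w) - 2)$, reflecting the choices of ``third half-edge'' at each $w \in W$ in Construction \ref{construction:good tori}. Each star class is a cycle of particle degree $2$, so each torus $\alpha \in A_W$ lies in particle degree $2|W|$; meanwhile, each variable of $\mathbb{F}[\pi_0(\graf_W)]$ inherits particle degree $1$ from the edge generators of the \'Swi\k{a}tkowski complex.

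The Hilbert function of a polynomial ring on $n = \Delta^W_\graf$ variables is the standard $\binom{j + n - 1}{n - 1}$ in degree $j$, so summing the contributions of the $|A_W|$ free generators after shifting by $2|W|$ yields
\[
\dim A_W(\graf)_k = \prod_{w \in W}(d(w) - 2)\binom{k - 2|W| + \Delta^W_\graf - 1}{\Delta^W_\graf - 1} \sim \frac{1}{(\Delta^W_\graf - 1)!}\prod_{w \in W}(d(w) - 2)\, k^{\Delta^W_\graf - 1},
\]
which combined with the first step gives the proposition. The substantive work has already been done in the inputs: Lemma \ref{lem:good tori free module} gives genuine freeness (not merely a surjection from $\mathbb{F}[\pi_0(\graf_W)]\langle A_W\rangle$), and Lemma \ref{lem:quotient by good tori} controls the discrepancy between $T_W(\graf)$ and its good submodule via Corollary \ref{cor:combined X}. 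Given these, the present proposition is essentially a Hilbert series bookkeeping exercise; the only subtlety to track is the shift $k \mapsto k - 2|W|$ imposed by the intrinsic particle degree of a $W$-torus.
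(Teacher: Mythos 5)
Your proof is correct and follows essentially the same route as the paper: reduce to $A_W(\graf)$ via the smallness of the quotient from Lemma \ref{lem:quotient by good tori}, then use the freeness statement of Lemma \ref{lem:good tori free module} to read off the Hilbert function of $\mathbb{F}[\pi_0(\graf_W)]\langle A_W\rangle$ shifted by $2|W|$. The only cosmetic difference is that the paper also cites Observation \ref{obs:torus quotient} in the first step; the substance and the bookkeeping are identical.
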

\begin{proof}
We have $\dim A_W(\graf)_k\sim \dim T_W(\graf)_k$ by Observation \ref{obs:torus quotient} and Lemma \ref{lem:quotient by good tori}. On the other hand, Lemma \ref{lem:good tori free module} shows that $A_W(\graf)$ is freely generated over $\mathbb{F}[\pi_0(\graf_W)]$ by the set $A_W$. Therefore, \begin{align*}
\dim T_W(\graf)_k&\sim \dim A_W(\graf)_k\\
&\sim\mathbb{F}[\pi_0(\graf_W)]\langle A_W\rangle_{k-2|W|}\\
&=\binom{k-2|W|+\Delta^W_\graf-1}{\Delta^W_\graf-1}|A_W|\\
&=\frac{(k-2|W|+\Delta_\graf^W-1)\cdots (k-2|W|+1)}{(\Delta^W_\graf-1)!}\prod_{w\in W}(d(w)-2)\\
&\sim \frac{1}{(\Delta^W_\graf-1)!}\prod_{w\in W} (d(w)-2)(k-2|W|)^{\Delta^W_\graf-1}\\
&\sim\frac{1}{(\Delta^W_\graf-1)!}\prod_{w\in W} (d(w)-2)k^{\Delta^W_\graf-1},
\end{align*} as claimed.
\end{proof}

\begin{proof}[Proof of Theorem \ref{thm:counting tori}]
By Corollary \ref{cor:direct sum}, $\dim T_i(\graf)_k\sim \sum_W \dim T_W(\graf)_k$, where $W$ ranges over \emph{all} well-separating subsets of cardinality $i$ (here we use the assumption on $\Delta^i_\graf$). By Proposition \ref{prop:local count}, the terms with $\Delta^W_\graf<\Delta^i_\graf$ are negligible in the limit, and the claim follows upon substituting the formula of Proposition \ref{prop:local count}.
\end{proof}

\bibliographystyle{amsalpha}
\bibliography{references}

\end{document}